\newtheorem{theorem}{Theorem}[section]
\newtheorem*{theorem*}{Theorem}
\newtheorem{theorem-non}{Theorem}
\newtheorem{lemma-non}{Lemma}
\theoremstyle{definition}
\theoremstyle{definition}
\newtheorem{conjecture-non}{Conjecture}
\newtheorem{corollary-non}{Corollary}
\newtheorem{proposition}[theorem]{Proposition}
\newtheorem{lemma}[theorem]{Lemma}
\newtheorem*{lemma*}{Lemma}
\newtheorem{corollary}[theorem]{Corollary}
\newtheorem*{conjecture*}{Conjecture}
\theoremstyle{definition}
\newtheorem{definition}[theorem]{Definition}
\newtheorem{example}[theorem]{Example}
\theoremstyle{remark}
\newtheorem{remark}[theorem]{Remark}
\DeclareMathOperator{\rank}{rank}
\numberwithin{equation}{section}
\begin{document}

\linespread{1.0}

\title[Principal elliptic bundles and compact homogeneous l.c.K. manifolds]{Principal elliptic bundles and compact homogeneous l.c.K. manifolds}

\author{Eder M. Correa}

\address{{UFMG, Avenida Ant\^{o}nio Carlos, 6627, 31270-901 Belo Horizonte - MG, Brazil}}
\address{E-mail: {\rm edermc@ufmg.br.}}

\begin{abstract} 
\normalsize{In this paper, we provide a systematic and constructive description of Vaisman structures on certain principal elliptic bundles over complex flag manifolds. From this description we explicitly classify homogeneous l.c.K. structures on compact homogeneous Hermitian manifolds using elements of representation theory of complex simple Lie algebras. Moreover, we also describe using Lie theory all homogeneous solutions of the Hermitian-Einstein-Weyl equation on compact homogeneous Hermitian-Weyl manifolds. As an application, we provide a huge class of explicit (nontrivial) examples of such structures on homogeneous Hermitian manifolds, these examples include elliptic bundles over full flag manifolds, elliptic bundles over Grassmannian manifolds, and 8-dimensional locally conformal hyperK\"{a}hler compact manifolds.}

\end{abstract}

\maketitle

\hypersetup{linkcolor=black}


\hypersetup{linkcolor=black}

\section{Introduction}

A locally conformally K\"{a}hler (l.c.K.) manifold is a conformal Hermitian manifold $(M,[g],J)$ such that for one (and hence for all) metric $g$ in the conformal class the corresponding fundamental 2-form $\Omega = g(J \otimes {\text{id}})$ satisfies $d\Omega = \theta \wedge \Omega$, where $\theta$ is a closed 1-form ({\it{Lee form}}). This is equivalent to the existence of an atlas such that the restriction of $g$ to any chart is conformal to a K\"{a}hler metric. The study of l.c.K. manifolds has been developing mainly since the 1970s, although, there are early contributions by P. Libermann (going back to 1954), see for instance \cite{Libermann1}, \cite{Libermann2}. The recent treatment of the subject was initiated by I. Vaisman in 1976. In a long series of papers (see \cite{Dragomir} and references therein) he established the main properties of l.c.K. manifolds, demonstrated a connection with P. Gauduchon's standard metrics \cite{Gauduchon}, and recognized the Boothby metric as l.c.K. \cite{Boothby}. From these, he introduced the notion of {\textit{generalized Hopf manifolds}} (g.H.m.), e.g. \cite{Vaisman}. As pointed out in \cite{Shells}, the name ``generalized Hopf manifold'' was already used by Brieskorn and van de Ven (see \cite{Brieskorn}) for some products of homotopy spheres which do not bear Vaisman's structure. Thus, the generalized Hopf manifolds in the sense of I. Vaisman are known nowadays as {\textit{Vaisman manifolds}}. A l.c.K. manifold $(M,[g],J)$ is said to be Vaisman if its Lee form $\theta $ is parallel with respect to the Levi-Civita connection of $g$.  The structure of compact strongly regular Vaisman manifolds is well-understood, in fact, they are principal flat $S^{1}$-bundles over Sasakian manifolds \cite{Vaisman}. Moreover, it was recently proved that every compact
homogeneous l.c.K. manifold is Vaisman \cite{Gauduchon1}. Another important feature of compact Vaisman manifolds is their relation with Einstein-Weyl geometry. Recall that a Weyl manifold is a conformal manifold equipped with a torsion free connection preserving the conformal structure, called Weyl connection, see for instance \cite{Folland}. It is said to be Einstein-Weyl if the symmetric trace-free part of the Ricci tensor of this connection vanishes cf. \cite{Higa}. As observed by Pedersen, Poon and Swann in \cite{PedersenPoonSwann}, compact l.c.K. manifolds with parallel Lee form describe Hermitian-Einstein-Weyl spaces. Furthermore, under the Vaisman-Sasaki correspondence provided in \cite{VaismanI}, the Einstein-Weyl (strongly regular) compact Vaisman manifolds correspond to compact (regular) Sasaki-Einstein manifolds. 

In the homogeneous setting, combining \cite{Vaisman} with \cite{VaismanII} one can prove that compact homogeneous Vaisman manifolds are defined by flat principal circle bundles over compact homogeneous Sasakian manifolds, and these ones are total spaces of Boothby-Wang fibrations over compact homogeneous K\"{a}hler manifolds \cite{BW}. In this last case, by considering the (strong) regularity of the underlying canonical foliation (see for instance \cite{VaismanII}, \cite{Chen}), it can be shown that compact homogeneous Vaisman manifolds can be realized as principal elliptic bundles over compact homogeneous K\"{a}hler manifolds. In this paper, by following a recent result provided in \cite{Gauduchon1}, which states that any compact homogeneous l.c.K. manifold is Vaisman (i.e. a principal elliptic bundle over a complex flag manifold), our main goal is providing a complete description of Vaisman structures on compact homogeneous l.c.K. manifolds by employing elements of representation theory of complex simple Lie algebras.

\subsection{Main results} In order to give an overview of our main results, let us recall some basic facts on the geometry of flag manifolds. A complex flag manifold $X$ is a compact simply connected homogeneous complex manifold defined as 
\begin{equation}
X = G^{\mathbbm{C}}/P = G/G \cap P,
\end{equation}
where $G^{\mathbbm{C}}$ is a complex simple Lie group with a compact real form given by $G$, and $P \subset G^{\mathbbm{C}}$ is a parabolic Lie subgroup. Considering ${\text{Lie}}(G^{\mathbbm{C}}) = \mathfrak{g}^{\mathbbm{C}}$, if we choose a Cartan subalgebra $\mathfrak{h} \subset \mathfrak{g}^{\mathbbm{C}}$, and a simple root system $\Sigma \subset \mathfrak{h}^{\ast}$, up to conjugation, we have that $P = P_{\Theta}$, for some $\Theta \subset \Sigma$, where $P_{\Theta}$ is a parabolic Lie subgroup determined by $\Theta$, see for instance \cite{Alekseevsky}. From this, in order to emphasize the underlying parabolic Cartan geometry of the pair $(G^{\mathbbm{C}},P)$, let us denote $X = X_{P}$. It is a well-known fact that $X_{P}$ is a compact simply connected homogeneous Hodge manifold (cf. \cite{BorelK}) whose {\textit{Picard group}} can be described by
\begin{equation}
{\text{Pic}}(X_{P}) = \bigoplus_{\alpha \in \Sigma \backslash \Theta}\mathbbm{Z}c_{1}\big ( \mathscr{O}_{\alpha}(1) \big ),
\end{equation}
such that $\mathscr{O}_{\alpha}(1) = G^{\mathbbm{C}}\times_{P}\mathbbm{C}$, $\forall \alpha \in \Sigma \backslash \Theta$, is a (homogeneous) holomorphic line bundle obtained as an associated holomorphic vector bundle to the principal $P$-bundle $P \hookrightarrow G^{\mathbbm{C}} \to X_{P}$, whose underlying $\mathbbm{C}$-linear holomorphic representation of $P$ is defined by the holomorphic character $\chi_{\omega_{\alpha}} \colon P \to {\rm{GL}}(1,\mathbbm{C}) = \mathbbm{C}^{\times}$, associated to the fundamental weight $\omega_{\alpha} \in \mathfrak{h}^{\ast}$, see for instance \cite{BorelH}, \cite{CONTACTCORREA}. By following the Vaisman-Sasaki correspondence provided in \cite{VaismanI}, and the results provided in \cite{Tsukada}, \cite{CONTACTCORREA}, we obtain the following theorem:
\begin{theorem-non}
\label{Theorem1}
Let $X_{P}$ be a complex flag manifold, associated to some parabolic Lie subgroup $P = P_{\Theta} \subset G^{\mathbbm{C}}$, and let $L \in {\text{Pic}}(X_{P})$ be a negative line bundle. Then, for every $\lambda \in \mathbbm{C}^{\times}$, such that $|\lambda| <1$, we have that the $T_{\mathbbm{C}}^{1}$-principal bundle
\begin{equation}
M = {\rm{Tot}}(L^{\times})/\Gamma_{\lambda}, \ \ {\text{such that}}  \ \ \Gamma_{\lambda} = \big \{ \lambda^{n} \in \mathbbm{C}^{\times} \ \big | \ n \in \mathbbm{Z}  \big \},
\end{equation}
admits a Vaisman structure completely determined by the $T_{\mathbbm{C}}^{1}$-principal connection $\Psi \in \Omega^{1}(M;\mathbbm{C})$ locally described by
\begin{equation}
\displaystyle \Psi = \frac{1}{\sqrt{-1}}\bigg [\partial \log \Big ( \big | \big |s_{U}v_{\mu(L)}^{+} \big | \big |^{2}\Big ) + \frac{dw}{w}\bigg],
\end{equation}
for some local section $s_{U} \colon U \subset X_{P} \to G^{\mathbbm{C}}$, where $v_{\mu(L)}^{+}$ is the highest weight vector of weight $\mu(L)$ associated to the irreducible $\mathfrak{g}^{\mathbbm{C}}$-module $V(\mu(L)) = H^{0}(X_{P},L^{-1})^{\ast}$.
\end{theorem-non}

As it can be seen, Theorem \ref{Theorem1} provides a constructive way to describe Vaisman structures on principal elliptic bundles over complex flag manifolds. In particular, it enables us to classify homogeneous l.c.K. structures on compact homogeneous manifolds. Actually, recall that a compact l.c.K. manifold $(M,J,g)$ is said to be homogeneous l.c.K. manifold if it admits an effective and transitive smooth (left) action of a compact connected Lie group $K$ with Lie algebra $\mathfrak{k}$, which preserves the metric $g$ and the complex structure $J$. In this setting, we have
\begin{equation}
K = K_{{\text{ss}}}Z(K)_{0},
\end{equation}
where $K_{{\text{ss}}}$ is a closed, connected, and semisimple Lie subgroup with ${\text{Lie}}(K_{{\text{ss}}}) = [\mathfrak{k},\mathfrak{k}]$, and $Z(K)_{0}$ is the closed connected subgroup defined by the connected component of the identity of the center of $K$ (cf. \cite{Knapp}). Recently, it was shown in \cite{Gauduchon1} that compact homogeneous l.c.K. manifolds are in fact Vaisman, and since compact homogeneous Vaisman manifolds are defined by principal elliptic bundles over flag manifolds \cite{VaismanII}, it follows that for any compact homogeneous l.c.K. manifold $(M,g,J)$ we have an underlying elliptic fibration $T_{\mathbbm{C}}^{1} \hookrightarrow M \to X_{P}$, such that $X_{P} = K_{{\text{ss}}}^{\mathbbm{C}}/P$ is a complex flag manifold defined by some parabolic Lie subgroup $P \subset K_{{\text{ss}}}^{\mathbbm{C}}$. From this characterization, and from Theorem \ref{Theorem1}, we have following result:  

\begin{theorem-non}
\label{Theorem2}
Let $(M,[g],J)$ be a compact homogeneous l.c.K. manifold, and let $K$ be the compact connected Lie group which acts on $M$ by preserving the l.c.K. structure. Suppose that the semisimple Lie subgroup $K_{{\text{ss}}}$ is simply connected and has a unique simple component. Then, the Lee form associated to the l.c.K. structure $([g],J)$ is (up to scale) completely determined by the $1$-form
\begin{equation}
\displaystyle \theta = -\bigg [d\log \Big ( \big | \big |s_{U}v_{\mu(L)}^{+} \big | \big |^{2}\Big ) + \frac{\overline{w}dw + wd\overline{w}}{|w|^{2}}\bigg],
\end{equation}
such that $s_{U} \colon U \subset X_{P} \to K_{\text{ss}}^{\mathbbm{C}}$ is some local section, $v_{\mu(L)}^{+}$ is the highest weight vector of weight $\mu(L)$ associated to the irreducible $\mathfrak{k}_{\text{ss}}^{\mathbbm{C}}$-module $V(\mu(L)) = H^{0}(X_{P},L^{-1})^{\ast}$, and $L \in {\text{Pic}}(X_{P})$ is a negative line bundle.

\end{theorem-non}

The above theorem provides a concrete way to describe homogeneous l.c.K. structures by using elements of representation theory of complex simple Lie algebras. It is worth pointing out that, under the hypotheses of Theorem \ref{Theorem2}, since $b_{1}(M) = 1$, we obtain a precise description, up to homothety, for the standard Gauduchon metric \cite{Gauduchon} on any compact homogeneous l.c.K. manifold, see for instance \cite{Vaisman}.

Given a compact l.c.K. manifold $(M,[g],J)$, such that $\dim_{\mathbbm{R}}(M) = n \geq 6$, without loss of generality, one can suppose that the associated Lee form $\theta$ is harmonic \cite{Gauduchon}. By considering the Levi-Civita connection $\nabla$ associated to the Riemannian metric $g$, one can define a Weyl connection on $M$ by setting 
\begin{equation}
D = \nabla - \frac{1}{2} \Big (\theta \odot {\text{id}} - g \otimes A \Big ),
\end{equation}
such that $A = \theta^{\sharp}$, see for instance \cite[page 125]{Weyl}, \cite{Folland} and \cite{Higa}. It follows from \cite{Gauduchon2} that, if the underlying Hermitian-Weyl manifold $(M,[g],D,J)$ is Hermitian-Einstein-Weyl, i.e., the symmetrised Ricci tensor of $D$ is a multiple of $g$ at each point, then the Ricci tensor ${\text{Ric}}^{\nabla}$ associate to $\nabla$ satisfies
\begin{equation}
{\text{Ric}}^{\nabla} = (n-2) \Big (||\theta||^{2}g - \theta \otimes \theta \Big ),
\end{equation}
 cf. \cite[Equation 40]{Gauduchon2}. Moreover, in this last setting we also have that $\nabla \theta \equiv 0$, which impiles that a compact Hermitian-Einstein-Weyl manifold $(M,[g],D,J)$ is particularly a Vaisman manifold. By following \cite{PedersenPoonSwann}, we have that every compact homogeneous Hermitian-Einstein-Weyl manifold can be obtained from a discrete quotient of the maximal root of the canonical bundle of a complex flag manifold $X_{P}$, i.e., a discrete quotient of the complex manifold underlying the total space of the holomorphic line bundle
\begin{equation}
 \mathscr{O}_{X_{P}}(-1) := \textstyle{\frac{1}{I(X_{P})}}K_{X_{P}},
\end{equation}
here $I(X_{P}) \in \mathbbm{Z}$ denotes the Fano index of $X_{P}$. Combining Theorem \ref{Theorem2} with \cite[Theorem 4.2]{PedersenPoonSwann}, we obtain the following result:

\begin{theorem-non}
\label{Theorem3}
Let $(M,[g],D,J)$ be a compact homogeneous Hermitian-Einstein-Weyl manifold, such that $\dim_{\mathbbm{R}}(M) \geq 6$, and let $K$ be the compact connected Lie group which acts on $M$ by preserving the Hermitian-Einstein-Weyl structure. Suppose that $K_{{\text{ss}}}$ is simply connected and has a unique simple component. Then, the Lee form $\theta_{g} \in \Omega^{1}(M)$ associated to the Hermitian-Einstein-Weyl metric $g$ is locally described by
\begin{equation}
\theta_{g} = -\bigg [ \frac{\ell}{I(X_{P})}d\log \Big ( \big | \big |s_{U}v_{\delta_{P}}^{+} \big | \big |^{2}\Big ) +  \frac{\overline{w}dw + wd\overline{w}}{|w|^{2}} \bigg],
\end{equation}
for some $\ell \in \mathbbm{Z}_{>0}$, such that $s_{U} \colon U \subset X_{P} \to K_{\text{ss}}^{\mathbbm{C}}$ is some local section, $v_{\delta_{P}}^{+}$ is the highest weight vector of weight $\delta_{P}$ associated to the irreducible $\mathfrak{k}_{\text{ss}}^{\mathbbm{C}}$-module $V(\delta_{P}) = H^{0}(X_{P},K_{X_{P}}^{-1})^{\ast}$, and $I(X_{P})$ is the Fano index of $X_{P}$. 
\end{theorem-non}

The result above provides a concrete description for $K$-invariant Hermitian-Einstein-Weyl metrics on compact homogeneous manifolds. Moreover, since in the setting above we also have that $b_{1}(M) = 1$, and the corresponding {\it{Higgs field}} $\theta_{g}$ is harmonic $K$-invariant, it follows that, up to homothety, the homogeneous Hermitian Eisntein-Weyl metric described in Theorem \ref{Theorem3} through $\theta_{g}$ is unique on $(M,J)$. It is worth mentioning that the result provided by Theorem \ref{Theorem3} is consistent with the classification of homogeneous K\"{a}hler-Einstein metrics on compact homogeneous manifolds (in the sense of Y. Matsushima \cite{MATSUSHIMA}), i.e., a compact homogeneous complex manifold admits at most one, up to scale, $K$-invariant Hermitian-Einstein-Weyl structure, cf. \cite[Conjecture 5.1]{OVconjecture}. 

\subsection{Outline of the paper} The content and main ideas of this paper are organized as follows: In Section 2, we shall cover the basic material related to the Vaisman-Sasaki correspondence. In Section 3, we present some basic results on strongly regular Vaisman manifolds realized as principal elliptic bundles over K\"{a}hler manifolds. In Section 4, we present some classical results on Hermitian-Weyl geometry and on Hermitian-Einstein-Weyl geometry. In Section 5, we cover some basic results about the realization of compact homogeneous l.c.K. manifolds as principal elliptic bundles over complex flag manifolds. In Section 6, we prove Theorem \ref{Theorem1}, Theorem \ref{Theorem2}, and Theorem \ref{Theorem3}. In Section 7, we provide several examples which illustrate the concrete applications of our main results.

\section{Vaisman and Sasaki manifolds}
In what follows, we shall cover some basic generalities about the relationship between compact Vaisman manifolds and compact Sasaki manifolds. Further details about the results presented in this section can be found in \cite{Gini}, \cite{Blair}.

\subsection{Vaisman manifolds from K\"{a}hler cones}
Let $(M,g,J)$ be a connected complex Hermitian manifold such that $\dim_{\mathbbm{C}}(M) \geq 2$. Let us denote by $\Omega = g(J \otimes {\text{id}})$ the associated fundamental $2$-form of $(M,g,J)$. 

\begin{definition}
\label{DEFLCK}
A Hermitian manifold $(M,g,J)$ is called locally conformally K\"{a}hler (l.c.K.) if it satisfies one of the following equivalent conditions: 

\begin{enumerate}

\item There exists an open cover $\mathscr{U}$ of $M$ and a family of smooth functions $\{f_{U}\}_{U \in \mathscr{U}}$, $f_{U} \colon U \to \mathbbm{R}$, such that each local metric $g_{U} = {\mathrm{e}}^{-f_{U}}g|_{U}$, is K\"{a}hlerian, $\forall U \in \mathscr{U}$.

\item There exists a globally defined closed $1$-form $\theta \in \Omega^{1}(M)$ (Lee form), such that $d\Omega = \theta \wedge \Omega$.

\end{enumerate}
\end{definition}

\begin{remark}
Throughout this paper, unless otherwise stated, we shall assume that $\theta$ is not exact and $\theta \not \equiv 0$.
\end{remark}

An important subclass of l.c.K. manifolds is defined by the parallelism of the Lee form with respect to the Levi-Civita connection of $g$. Being more precise, we have the following definition.

\begin{definition}
A l.c.K. manifold $(M,g,J)$ is called a Vaisman manifold if $\nabla \theta = 0$, where $\nabla$ is the Levi-Civita connection of $g$.
\end{definition}

In order to establish some results which relate Vaisman manifolds to Sasakian manifolds, let us recall some generalities on Sasaki geometry.

\begin{definition}
\label{ContMetric}
Let $(Q,g_{Q})$ be a Riemannian manifold of dimension $2n+1$. A contact metric structure on $(Q,g_{Q})$ is a triple $(\phi,\xi,\eta)$, where $\phi$ is a $(1,1)$-tensor, $\xi$ is a vector field and $\eta$ is a $1$-form, such that: 

\begin{enumerate}

    \item $\eta \wedge (d\eta)^{n} \neq 0$, \  $\eta(\xi) = 1$, \ $\phi \circ \phi = - {\rm{id}} + \eta \otimes \xi$,
    
    \item $g_{Q}(\phi \otimes \phi) = g_{Q} - \eta \otimes \eta$, \ $d\eta = 2g_{Q}(\phi \otimes {\rm{id}})$.
       
\end{enumerate}
A contact metric structrure $(\phi,\xi,\eta)$ on $(Q,g_{Q})$ is called {\rm{K}}-contact structure if $\mathscr{L}_{\xi}g_{Q} = 0$.
\end{definition}

Given a contact metric structure $(g_{Q},\phi,\xi,\eta)$ on a smooth manifold $Q$, one can consider the manifold defined by its metric cone 
\begin{equation}
\mathscr{C}(Q) = Q \times \mathbbm{R}^{+}.
\end{equation}
Taking the coordinate $r$ on $\mathbbm{R}^{+}$ one can define the warped product Riemannian metric on $\mathscr{C}(Q)$ by setting
\begin{equation}
g_{\mathscr{C}} = r^{2}g_{Q} + dr \otimes dr.
\end{equation}
Furthermore, from $(\phi,\xi,\eta)$ one has an almost-complex structure defined on $\mathscr{C}(Q)$ by setting
\begin{equation}
\label{complexcone}
J_{\mathscr{C}}(Y) = \phi(Y) - \eta(Y)r\partial_{r}, \ \ \ \ \ J_{\mathscr{C}}( r\partial_{r}) = \xi.
\end{equation}
From the above comments, a Sasaki manifold can be defined as follows. 
\begin{definition}
\label{sasakicondition}
A contact metric structure $(g_{Q},\phi,\xi,\eta)$ on a smooth manifold $Q$ is called Sasaki if $(\mathscr{C}(Q),g_{\mathscr{C}},J_{\mathscr{C}})$ is a K\"{a}hler manifold.
\end{definition}


Given a Sasakian manifold $(Q,g_{Q})$ with structure tensors $(g_{Q},\phi,\xi,\eta)$, for every $\lambda \in \mathbbm{R}^{+}$ we can define a smooth map $\tau_{\lambda} \colon \mathscr{C}(Q) \to  \mathscr{C}(Q)$, such that 
\begin{equation}
\label{dilatation}
\tau_{\lambda}\colon (x,s) \to (x,\lambda s).
\end{equation}
It is straightforward to show that $\tau_{\lambda}^{\ast}g_{\mathscr{C}}  = \lambda^{2}g_{\mathscr{C}}$, i.e. $\tau_{\lambda}$ is a dilatation (homothety). Moreover, the map defined above is also holomorphic with respect to the complex structure given in Eq. (\ref{complexcone}). We observe that the K\"{a}hler form $\omega_{\mathscr{C}} = g_{\mathscr{C}}(J_{\mathscr{C}} \otimes \text{id})$ can be written as
\begin{equation}
\displaystyle \omega_{\mathscr{C}} = d \Big (\frac{r^{2}\eta}{2} \Big) = rdr \wedge \eta + r^{2}\frac{d\eta}{2}.
\end{equation}
Thus, by setting $\psi = \log(r) \colon \mathscr{C}(Q) \to \mathbbm{R}$, we obtain 

\begin{equation}
\displaystyle \omega_{\mathscr{C}} = {\mathrm{e}}^{2\psi} \Big ( d\psi \wedge \eta + \frac{d\eta}{2}\Big).
\end{equation}
If we consider $\widetilde{\Omega} = {\mathrm{e}}^{-2\psi}\omega_{\mathscr{C}}$, from the identification $\text{pr}_{1} \times \psi \colon Q \times \mathbbm{R}^{+} \to Q \times \mathbbm{R}$, it follows that the map $\tau_{\lambda} \colon \mathscr{C}(Q) \to  \mathscr{C}(Q)$ transforms the coordinate $\psi$ by $\psi \mapsto \psi + \log(\lambda)$, so we have $\tau_{\lambda}^{\ast} \widetilde{\Omega} = \widetilde{\Omega}$. Therefore, if we take the manifold $M = \mathscr{C}(Q) /  \sim_{\lambda}$, where $``  \sim_{\lambda}"$ stands for the equivalence relation $(x,s) \sim (x,\lambda s)$, we obtain a l.c.K. structure $(g,J)$ on $M = Q \times S^{1}$, where $J$ is a complex structure induced by $J_{\mathscr{C}}$, and $\wp^{\ast}g = \widetilde{\Omega}({\rm{id}}\otimes J_{\mathscr{C}})$, where $\wp \colon \mathscr{C}(Q) \to  M$ is the projection map. Observing that 
\begin{equation}
\widetilde{\Omega} = \displaystyle d\psi \wedge \eta + \frac{d\eta}{2},
\end{equation}
if one sets $\widetilde{\theta} =  -2d\psi$, it follows that $ d \widetilde{\Omega} = \widetilde{\theta} \wedge \widetilde{\Omega}$, which implies that $\wp^{\ast}\theta =  -2d\psi$, where $\theta $ is the Lee form associated to the l.c.K. structure $(g,J)$ described above on  $M = Q \times S^{1}$. One can also easily verify that for this last l.c.K. structure the associated Lee form $\theta$ satisfies $\nabla \theta \equiv 0$. 

\begin{remark}
\label{globkahler}
In the description above we implicitly used the fact that there exists a smooth function $f \colon \mathscr{C}(Q) \to \mathbbm{R}$, such that $\widetilde{\Omega}  = {\mathrm{e}}^{f}\omega_{\mathscr{C}}$, i.e. the metric induced by $\widetilde{\Omega} $ on $\mathscr{C}(Q)$ is globally conformal K\"{a}hler, e.g. \cite[Lemma 5.3]{Gini}. 

\end{remark}

The previous construction is a quite natural way to obtain Vaisman structures by means of K\"{a}hler structures defined on cones over Sasakian manifolds. More generally, given a Sasakian manifold $Q$, if we consider an automorphism of the Sasakian structure $\varphi \colon Q \to Q$, we can define $\varphi_{\lambda} \colon \mathscr{C}(Q) \to \mathscr{C}(Q)$,  for some $\lambda \in \mathbbm{R}^{+}$, such that $\varphi_{\lambda} \colon (x,s) \to (\varphi(x),\lambda s)$. The map above also satisfies $\varphi_{\lambda}^{\ast} g_{\mathscr{C}}  = \lambda^{2}g_{\mathscr{C}}$, and we can consider the manifold 
\begin{equation}
M_{\varphi,\lambda} = \mathscr{C}(Q) / \sim_{\varphi,\lambda},
\end{equation}
where ``$\sim_{\varphi,\lambda}$" stands for the equivalence relation $(x,s) \sim (\varphi(x),\lambda s)$. From this, we can equip the manifold $M_{\varphi,\lambda}$ with l.c.K. structure in a similar way as we have done before. Moreover, we have the following result.

\begin{proposition}[\cite{Gini}, \cite{Kamishima}]
\label{fromsasakitovaisman}
Let $Q$ be a compact Sasakian manifold. Then the l.c.K. manifold $M_{\varphi,\lambda}$ obtained from $\varphi_{\lambda} \colon \mathscr{C}(Q) \to \mathscr{C}(Q)$ as above is Vaisman.
\end{proposition}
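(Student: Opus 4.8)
The plan is to reduce everything to the computation already carried out in the excerpt for the case $\varphi = \mathrm{id}$, and then to check that the extra automorphism $\varphi$ disturbs none of the invariance properties that computation relied on. First I would record the consequences of $\varphi$ being an automorphism of the Sasakian structure: it preserves all the structure tensors, so $\varphi^{\ast}\eta = \eta$, $\varphi^{\ast}d\eta = d\eta$, $\varphi^{\ast}g_{Q} = g_{Q}$, $\varphi_{\ast}\xi = \xi$, and $\varphi_{\ast}\circ\phi = \phi\circ\varphi_{\ast}$. These are exactly the ingredients out of which $\widetilde{\Omega}$, $\widetilde{g}$, and $J_{\mathscr{C}}$ were built on the cone.

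Next I would analyze the map $\varphi_{\lambda}(x,s) = (\varphi(x),\lambda s)$ on $\mathscr{C}(Q) = Q \times \mathbb{R}^{+}$, writing $\psi = \log(r)$ with $r = s$. The scaling part gives $\varphi_{\lambda}^{\ast}\psi = \psi + \log(\lambda)$, hence $\varphi_{\lambda}^{\ast}d\psi = d\psi$, while the Euler field is fixed, $(\varphi_{\lambda})_{\ast}(r\partial_{r}) = r\partial_{r}$. Combining this with the $\varphi$-invariance of $\eta$, $d\eta$, $\phi$, $\xi$ recorded above shows at once that $\varphi_{\lambda}^{\ast}J_{\mathscr{C}} = J_{\mathscr{C}}$ (so $\varphi_{\lambda}$ is holomorphic) and $\varphi_{\lambda}^{\ast}g_{\mathscr{C}} = \lambda^{2}g_{\mathscr{C}}$ (the stated homothety). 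Consequently the globally conformal Kähler data $\widetilde{\Omega} = d\psi \wedge \eta + \tfrac{1}{2}d\eta$ and $\widetilde{g} = g_{Q} + d\psi \otimes d\psi$ satisfy $\varphi_{\lambda}^{\ast}\widetilde{\Omega} = \widetilde{\Omega}$ and $\varphi_{\lambda}^{\ast}\widetilde{g} = \widetilde{g}$, since the constant shift of $\psi$ by $\log(\lambda)$ is killed by $d$. I would then note that for $\lambda \neq 1$ the group $\mathbb{Z}$ acts through the iterates of $\varphi_{\lambda}$ freely and properly discontinuously (the factor $s \mapsto \lambda^{n}s$ guarantees this), and that $Q$ compact makes $M_{\varphi,\lambda}$ a compact smooth manifold; the invariances above let $J_{\mathscr{C}}$, $\widetilde{g}$, $\widetilde{\Omega}$ descend to an l.c.K. structure $(g,J)$ on $M_{\varphi,\lambda}$ whose Lee form $\theta$ pulls back to $\widetilde{\theta} = -2d\psi$, exactly as in the $\varphi = \mathrm{id}$ case.

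Finally I would verify the Vaisman condition $\nabla\theta = 0$. The parallelism $\widetilde{\nabla}\widetilde{\theta} \equiv 0$ on $(\mathscr{C}(Q),\widetilde{g})$ was already established in the excerpt from Koszul's identity, using only that $d\widetilde{\theta} = 0$ and $\mathscr{L}_{\partial_{\psi}}\widetilde{g} = 0$; since neither input involves $\varphi$, the parallelism holds verbatim here. Because the projection $\wp \colon (\mathscr{C}(Q),\widetilde{g}) \to (M_{\varphi,\lambda},g)$ is a local isometry, $\nabla\theta = 0$ descends, so $M_{\varphi,\lambda}$ is Vaisman.

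Honestly, the main content is bookkeeping rather than a serious obstacle: the one point requiring genuine care is confirming that a Sasakian automorphism preserves the whole package $(\eta, d\eta, \phi, \xi, g_{Q})$ and then deducing the holomorphicity and homothety of $\varphi_{\lambda}$ from this together with the scaling behaviour of $\psi$ and of $r\partial_{r}$; everything else is inherited from the $\varphi = \mathrm{id}$ computation. I would also remark that an alternative, more structural proof is available: invoke the structure theorem of \cite{Strucvaisman}, which identifies compact Vaisman manifolds with mapping tori of Sasakian automorphisms, yielding the statement directly.
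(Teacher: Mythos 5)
Your proof is correct and follows essentially the same route the paper intends: the paper carries out exactly this computation for $\varphi = \mathrm{id}$ (invariance of $\widetilde{\Omega}$, $\widetilde{g}$ under the shift $\psi \mapsto \psi + \log(\lambda)$, parallelism of $\widetilde{\theta} = -2d\psi$ via Koszul's identity, descent through the local isometry $\wp$) and then defers the general case to \cite[Proposition 5.4]{Gini}, whose content is precisely your bookkeeping check that a Sasakian automorphism preserves $(g_{Q},\phi,\xi,\eta)$, making $\varphi_{\lambda}$ a holomorphic homothety commuting with the flow of $\partial/\partial\psi$. One caveat on your closing remark: invoking the structure theorem of \cite{Strucvaisman} would be backwards, since as stated it goes from compact Vaisman manifolds to mapping tori of Sasakian automorphisms, whereas the proposition is the converse direction.
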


Notice that under the hypotheses of the last proposition, if we denote 
\begin{equation}
\label{grouppresentation}
\Gamma_{\lambda,\varphi} = \big \{ \varphi_{\lambda}^{n} \ \ \big | \ \ n \in \mathbbm{Z} \big\},
\end{equation}
it follows that $M_{\varphi,\lambda} = \mathscr{C}(Q) / \Gamma_{\lambda,\varphi}$ can be endowed with a structure of Vaisman manifold which comes from the globally conformal K\"{a}hler structure of the cone  $\mathscr{C}(Q) = Q \times \mathbbm{R}^{+}$.
\begin{remark}
\label{Vaismanfromsasaki}
It is worth pointing out that the aforementioned group $\Gamma_{\lambda,\varphi}$ is a subgroup of the group $\mathcal{H}(\mathscr{C}(Q))$ of biholomorphisms $f \colon \mathscr{C}(Q) \to \mathscr{C}(Q)$, which satisfies $f^{\ast}g_{\mathscr{C}} = ag_{\mathscr{C}}$, for some $a \in \mathbbm{R}^{+}$ (scale factor). In the general setting, i.e., when $Q$ is not necessarily compact, we can consider the homomorphism
\begin{equation}
\label{scalehomomorphism}
\rho_{Q} \colon \mathcal{H}(\mathscr{C}(Q)) \to \mathbbm{R}^{+},
\end{equation}
which assigns to each element of $\mathcal{H}(\mathscr{C}(Q))$ its scale factor. Given a subgroup $\Gamma \subset \mathcal{H}(\mathscr{C}(Q))$ which satisfies $\rho_{Q}(\Gamma) \neq 1$, if we suppose that $f \circ \Phi_{t} = \Phi_{t} \circ f$, $\forall f \in \Gamma$, where $\Phi_{t}$ denotes the flow of $\frac{\partial}{\partial \psi}$, and that $\Gamma$ acts freely and
properly discontinuously on $\mathscr{C}(Q)$, it follows that $M_{\Gamma} = \mathscr{C}(Q)/\Gamma$ is a Vaisman manifold. The l.c.K. structure on $M_{\Gamma}$ is obtained from the globally conformal K\"{a}hler structure which we have on $\mathscr{C}(Q)$, see for instance \cite[Proposition 5.4]{Gini}. 
\end{remark}

\begin{definition}
Given a Vaisman manifold $M_{\Gamma} = \mathscr{C}(Q)/\Gamma$, obtained as in Remark \ref{Vaismanfromsasaki}, we call the pair $(\mathscr{C}(Q),\Gamma)$ a presentation of $M_{\Gamma}$.
\end{definition}

Notice that every compact Vaisman manifold $M$ has a natural presentation $(\widetilde{M} = \mathscr{C}(\widetilde{Q}),\Gamma)$, where $\widetilde{M}$ is the universal covering space of $M$, and $\Gamma = \pi_{1}(M)$, e.g. \cite{Kamishima}. The universal covering presentation $(\widetilde{M},\pi_{1}(M))$ is also called the maximal presentation of $M$. The minimal presentation $(M_{\text{min}},\Gamma_{\text{min}})$ of $M$ is defined by $M_{\text{min}} = \widetilde{M}/\big (\pi_{1}(M) \cap {\text{Iso}}(\widetilde{M})\big )$ and $\Gamma_{\text{min}} = \pi_{1}(M)/\big(\pi_{1}(M) \cap {\text{Iso}}(\widetilde{M})\big)$, where ${\text{Iso}}(\widetilde{M}) = \ker(\rho_{\widetilde{Q}})$, for more details we suggest \cite{GiniI}.

\section{Vaisman manifolds and elliptic fibrations over K\"{a}hler manifolds}
\label{Vaismantorusfibration}
This section is devoted to present some basic results related to the canonical foliation of compact Vaisman manifolds. Further details on this subject can be found in \cite{Vaisman}, \cite{Chen}, \cite{Tsukada}, \cite{Dragomir}.

\subsection{The canonical foliation of a compact Vaisman manifold} Given a compact Vaisman manifold $(M,g,J)$ one can consider the vector field $A = \theta^{\sharp} \in \Gamma(TM)$ (Lee vector field), where $g(X,A) = \theta(X)$, $\forall X \in \Gamma(TM)$. From this, one can take $B = JA$, and consider the anti-Lee form $\vartheta = -\theta \circ J$. The vector fields $A,B \in \Gamma(TM)$ are infinitesimal automorphism of the Hermitian structure, i.e., the following hold
\begin{equation}
\label{equationfoliation}
\mathscr{L}_{A}J = \mathscr{L}_{B}J = 0, \ {\text{and}} \ \mathscr{L}_{A}g = \mathscr{L}_{B}g = 0.
\end{equation}
Moreover, one can suppose also that $||\theta|| = ||\vartheta|| = 1$, and $[A,B] = 0$, see for instance \cite{Vaisman}, \cite[pages 37-39]{Dragomir}. The distribution generated by $A,B \in \Gamma(TM)$ defines a foliation $\mathcal{F}$ which is completely integrable, this foliation is called {\textit{canonical ({\text{or}} characteristic) foliation}} of $(M,g,J)$, cf. \cite{Vaisman}, \cite{Tsukada}. The foliation $\mathcal{F}$ plays an important role when we investigate topological and complex analytic properties of a compact Vaisman manifold. As we shall see bellow, the leaves of $\mathcal{F}$ are totally geodesic and locally flat complex submanifolds. Moreover, $\mathcal{F}$ is transversally K\"{a}hlerian.

Before we state the main theorem which ensures the aforementioned facts about $\mathcal{F}$, let us recall some basic results.

\begin{lemma}[Palais, \cite{Palais}] Let $\mathcal{F}$ be a regular foliation on a compact manifold $M$, then we have the following facts: 

\begin{enumerate}

\item The leaves of $\mathcal{F}$ are compact submanifolds of $M$;

\item The leaf space $M / \mathcal{F}$ is a (Hausdorff) compact manifold;

\item The natural projection is a smooth map.

\end{enumerate}

\end{lemma}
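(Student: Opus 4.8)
The plan is to realize the leaf space as the quotient of $M$ by the equivalence relation whose classes are the leaves of $\mathcal{F}$, and to verify the hypotheses of Godement's criterion so that this quotient inherits a smooth manifold structure making the natural projection a submersion. Write $p = \dim \mathcal{F}$ for the leaf dimension and $q = \dim M - p$ for the codimension. The working content of regularity is that every point admits a foliated (cubical) chart $\varphi \colon U \to \mathbb{R}^{p} \times \mathbb{R}^{q}$ in which each leaf meets $U$ in at most one plaque $\mathbb{R}^{p} \times \{c\}$; in particular the leaves are embedded (not merely immersed) submanifolds, and the transverse coordinate of such a chart is locally constant along leaves.

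First I would introduce the relation $R = \{(x,y) \in M \times M : y \text{ lies on the leaf through } x\}$ and show it is an embedded submanifold of $M \times M$ of dimension $\dim M + p$. Using a pair of foliated charts around $x$ and around a point $y$ on the same leaf, $R$ is cut out locally by the condition that the two transverse coordinates agree, which is a submersive equation onto $\mathbb{R}^{q}$; hence $R$ is locally a graph-type submanifold and the first projection $\mathrm{pr}_{1}\colon R \to M$ is a submersion whose fibers are exactly the leaves.

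The main obstacle, and the one step where compactness of $M$ is indispensable, is proving that $R$ is closed in $M \times M$; this is precisely what fails for non-regular foliations such as the irrational flow on a torus. Here I would argue by contradiction: if $(x_{n}, y_{n}) \to (x,y)$ with $(x_{n}, y_{n}) \in R$ but $(x,y) \notin R$, then each $y_{n}$ lies on the leaf of $x_{n}$ while $y$ lies on a leaf distinct from that of $x$. Covering $M$ by \emph{finitely many} regular foliated charts (possible by compactness) and passing to a subsequence so that $x_{n}, x$ lie in one chart and $y_{n}, y$ in another, the local constancy of the transverse coordinate along leaves forces $x$ and $y$ to share the relevant transverse coordinate, hence to lie on a common plaque and therefore on the same leaf, a contradiction. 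The finiteness of the cover, via a Lebesgue-number argument, is exactly what rules out the pathological accumulation of leaves that destroys Hausdorffness in the non-compact or non-regular case.

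With $R$ a closed embedded submanifold and $\mathrm{pr}_{1}$ a submersion, Godement's criterion yields that $M/\mathcal{F} = M/R$ is a smooth Hausdorff manifold of dimension $q$ and that the natural projection $\pi \colon M \to M/\mathcal{F}$ is a smooth submersion, which is locally modeled on $\mathbb{R}^{p} \times \mathbb{R}^{q} \to \mathbb{R}^{q}$; this establishes parts (2) and (3). Finally, for part (1): each leaf is a fiber $\pi^{-1}(b)$, hence a closed subset of the compact manifold $M$ and therefore compact, and it is an embedded submanifold as the fiber of a submersion. Compactness of the leaf space $M/\mathcal{F}$ itself then follows at once, since it is the continuous image $\pi(M)$ of the compact space $M$.
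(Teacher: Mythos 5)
You are proving a statement the paper itself does not prove: the lemma is quoted from Palais's memoir with no internal argument, so your proposal has to stand on its own. Its architecture is the standard one and is essentially sound: realize $M/\mathcal{F}$ as $M/R$ for the graph relation $R \subset M \times M$, use regularity to show $R$ is locally a graph over the transverse coordinates (so $R$ is an embedded submanifold and $\mathrm{pr}_{1}$ a submersion), and invoke Godement's criterion, with Hausdorffness equivalent to closedness of $R$. Granted closedness, your deductions of parts (1)--(3) and of compactness of the leaf space are correct. One small imprecision in the local description: for $x$ and $y$ in different charts the condition cutting out $R$ is not that ``the two transverse coordinates agree'' (they live in unrelated coordinate systems) but that $t_{V}(y') = h(t_{U}(x'))$ for the holonomy transport $h$ along a leaf path from $x$ to $y$; regularity (at most one plaque per leaf per chart) is what guarantees $R$ coincides locally with that single graph rather than containing further sheets.

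The genuine gap is in the closedness of $R$, which you rightly identify as the crux but do not actually prove. Your argument places $x_{n}, x$ in a chart $U$ and $y_{n}, y$ in a chart $V$ and concludes that ``$x$ and $y$ share the relevant transverse coordinate, hence lie on a common plaque'' --- but a plaque lives inside a single chart, and when $U \neq V$ there is no shared transverse coordinate; as written the argument only handles the case where $x$ and $y$ are close enough to lie in one regular chart. Moreover the finite cover is invoked but never used: only two of its charts appear. The missing idea is the uniform finiteness that regularity plus compactness actually buys: fixing a finite atlas of $N$ regular charts (shrunk so that $U_{\alpha}' \subset U_{\alpha}$ with compact closure, so limits of plaques of $U_{\alpha}'$ land in plaques of $U_{\alpha}$), every leaf meets each chart in at most one plaque and is therefore a union of at most $N$ plaques. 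One then joins $x_{n}$ to $y_{n}$ by a chain of at most $N$ plaques from this atlas, passes to a subsequence along which the combinatorics of the chain is constant, and lets the transverse coordinates converge chart by chart; the limit is a chain of pairwise-overlapping plaques joining $x$ to $y$, so $y \in \mathrm{leaf}(x)$. Without this uniform bound the connecting leaf paths can degenerate in the limit --- exactly the failure mode in Sullivan-type examples of foliations of compact manifolds by circles with non-Hausdorff leaf space (where regularity fails) --- and it is precisely the point on which the paper itself hedges in the remark following the lemma, where Hausdorffness of $M/\mathcal{F}$ is \emph{assumed} rather than derived. So: right strategy, right pressure point, but the one step you call indispensable is asserted rather than established.
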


\begin{remark}
In general, given a foliated manifold $(M,\mathcal{F})$ ($\rank{(\mathcal{F})} < \dim(M)$), the topology of the leaf space $M / \mathcal{F}$ might be very complicated, possibly non-Hausdorff. Since we are interested in a more restrictive setting, unless otherwise stated, we shall suppose that the leaf space $M / \mathcal{F}$ of a regular foliation is Hausdorff.
\end{remark}

Given a compact Vaisman manifold $(M,g,J)$ with associated Lee form $\theta \in \Omega^{1}(M)$, we denote 
\begin{equation}
\mathcal{F} =  \mathcal{F}_{A} \oplus \mathcal{F}_{B},
\end{equation}
where $\mathcal{F}_{A}$ is the foliation generated by $A = \theta^{\sharp} \in \Gamma(TM)$, and $\mathcal{F}_{B}$ is the foliation generated by $B = JA \in \Gamma(TM)$.

\begin{definition}
\label{regvaisman}
A compact Vaisman manifold $(M,g,J)$ is called regular, if $\mathcal{F}_{A}$ is a regular foliation, and strongly regular, if $\mathcal{F}_{A}$ and $\mathcal{F}$ are both regular.
\end{definition}

Now, from \cite{VaismanI}, see also \cite[Theorem 6.2]{Dragomir}, we have the following result.

\begin{theorem}
\label{flatvaisman}
A compact l.c.K. manifold $(M,g,J)$ is a regular Vaisman manifold if and only if there exists a flat $S^{1}$-principal bundle $\pi \colon M \to M/\mathcal{F}_{A}$, with $Q = M/\mathcal{F}_{A}$ being a connected and compact Sasakian manifold. If this is the case, the restrictions of $\pi$ to the leaves of $\mathcal{F}_{A}$ are covering maps.
\end{theorem}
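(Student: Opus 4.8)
The plan is to prove both implications by exploiting that, after normalizing $||\theta|| = 1$, the Vaisman condition $\nabla\theta = 0$ makes the Lee vector field $A = \theta^{\sharp}$ a parallel, hence Killing, unit vector field, and that $\theta$ itself is a closed, $A$-invariant $1$-form with $\theta(A) = 1$. First I would record the elementary consequences: Cartan's formula gives $\mathscr{L}_{A}\theta = d(\iota_{A}\theta) + \iota_{A}d\theta = d(1) + 0 = 0$, and together with $\mathscr{L}_{A}J = \mathscr{L}_{A}g = 0$ from \eqref{equationfoliation} this shows that the flow of $A$ acts by holomorphic isometries preserving $\theta$. These are exactly the features needed to turn the leaf space $M/\mathcal{F}_{A}$ into a Sasakian manifold and to make $\theta$ a flat connection $1$-form.

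For the forward implication, assume $(M,g,J)$ is regular Vaisman. I would first invoke the structure theorem (Theorem \ref{minimalpresentation}) to present $M = \mathscr{C}(Q_{0})/\Gamma_{\lambda,\varphi}$ with $Q_{0}$ a compact Sasakian manifold, $\varphi$ a Sasakian automorphism and $\lambda > 1$; here the lift of $A$ to $\mathscr{C}(Q_{0}) = Q_{0}\times\mathbb{R}^{+}$ generates the $\psi = \log r$ direction of the cone, so the leaves of the lifted Lee foliation are the lines $\{x\}\times\mathbb{R}^{+}$. Regularity of $\mathcal{F}_{A}$ forces each such line to descend to a closed orbit in $M$, which happens exactly when $\varphi$ is of finite order acting freely; then $Q \ce M/\mathcal{F}_{A} = Q_{0}/\langle\varphi\rangle$ is a compact connected manifold and, by Palais' lemma, $\pi\colon M\to Q$ is a smooth fibration with circle fibers. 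To upgrade $\pi$ to a \emph{flat principal} $S^{1}$-bundle I would argue that the residual $\mathbb{R}^{+}$-scaling of the cone commutes with $\Gamma_{\lambda,\varphi}$ and hence descends to an action on $M$ whose orbits are precisely the leaves of $\mathcal{F}_{A}$; since $A$ is a unit Killing field this action has a single common period and trivial stabilizers, so it is a free effective $S^{1}$-action and $\pi$ is principal. Flatness is then immediate, because $\theta$ is an $A$-invariant connection $1$-form with $\theta(A) = 1$ and curvature $d\theta = 0$. The Sasakian structure on $Q$ is obtained by descent: the tensors $(g_{Q_{0}},\phi,\xi,\eta)$ are $\langle\varphi\rangle$-invariant, so they pass to $Q$; equivalently one descends $\vartheta = -\theta\circ J$ and $B = JA$ directly, noting that $\vartheta(A) = -g(A,JA) = 0$, $\mathscr{L}_{A}\vartheta = 0$ and $[A,B] = 0$ make them $\mathcal{F}_{A}$-basic, and then verifies the contact metric axioms together with the K\"{a}hler-cone characterization of Sasakian manifolds.

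For the converse, assume $\pi\colon M\to Q$ is a flat principal $S^{1}$-bundle over a compact connected Sasakian manifold $Q$. Here I would reverse the cone construction of Section 2: the flat connection provides a closed $1$-form $\theta$ on $M$ with $\theta(A) = 1$ on the generator $A$ of the $S^{1}$-action, and via the holonomy representation $\rho\colon\pi_{1}(Q)\to S^{1}$ one realizes the K\"{a}hler cover of $M$ as $\mathscr{C}(Q)$ and $M$ itself as a quotient $\mathscr{C}(Q)/\Gamma$ by a cyclic subgroup $\Gamma\subset\mathcal{H}(\mathscr{C}(Q))$ commuting with the flow of $\psi$. By Proposition \ref{fromsasakitovaisman}, and the accompanying discussion of such quotients, $M$ then carries a Vaisman structure whose Lee foliation is precisely the circle fibration $\pi$; in particular $\mathcal{F}_{A}$ is regular, so $(M,g,J)$ is regular Vaisman. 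The last assertion follows from this presentation: each leaf of $\mathcal{F}_{A}$ is the image, under the K\"{a}hler covering $\mathscr{C}(Q)\to M$, of a cone line $\{x\}\times\mathbb{R}^{+}$, and the restriction of this covering — equivalently, the complete flow of the parallel field $A$ — to such a line is a covering map onto the corresponding circle leaf.

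I expect the main obstacle to be the passage from a foliation by circles to a genuine principal $S^{1}$-bundle, that is, proving that all leaves of $\mathcal{F}_{A}$ share a common period and that the induced circle action is free: a priori the orbits of $A$ could have incommensurable lengths or be multiply covered, and it is the parallelism and the Killing property of $A$, channeled through the structure theorem, that force the period to be locally constant on the connected leaf space and hence constant. A secondary technical point is to check that the data descended to $Q = M/\mathcal{F}_{A}$ satisfies the full Sasakian normality (K\"{a}hler-cone) condition and not merely the contact metric axioms.
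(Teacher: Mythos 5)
First, a point of calibration: the paper does not actually prove Theorem \ref{flatvaisman}; it is recalled from the literature cited at the head of Section \ref{Vaismantorusfibration} (Vaisman, Chen--Piccinni, Tsukada, Dragomir), where the argument is elementary and local. There, regularity plus Palais' lemma gives the circle fibration, the normalized Lee form itself serves as the flat connection (you correctly isolated the needed identities $\mathscr{L}_{A}\theta = 0$, $\theta(A)=1$, $d\theta = 0$), the Sasakian tensors on $Q$ are obtained by descending $B = JA$, $-\vartheta$ and the transversal metric, and the converse follows from local flat trivializations, in which $g = \pi^{\ast}g_{Q} + \theta\otimes\theta$ is a local Riemannian product, so $\nabla\theta = 0$ is immediate. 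Your forward direction instead routes through the Ornea--Verbitsky structure theorem (Theorem \ref{minimalpresentation}): present $M = \mathscr{C}(Q_{0})/\Gamma_{\lambda,\varphi}$, identify the lifted Lee leaves with the cone rays, and show that regularity of $\mathcal{F}_{A}$ forces $\varphi$ to have finite order and act freely, so the descended $\psi$-translation flow is a free $S^{1}$-action with uniform period. This is sound and even yields a sharper picture ($Q = Q_{0}/\langle\varphi\rangle$, period $k\log\lambda$), at the cost of a much heavier input than the classical proof needs; note that it is this finite-order/freeness analysis, not the fact that $A$ is a unit Killing field, that produces the common period, so your parenthetical appeal to the Killing property is a red herring. (Also, Definition \ref{regvaisman} reserves ``regular'' for $\mathcal{F}$; you tacitly read the hypothesis as regularity of $\mathcal{F}_{A}$, which is indeed the only reading under which $M/\mathcal{F}_{A}$ is a manifold, but this should be said.)

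The genuine gap is in your converse, in two places. (i) The covering bookkeeping is wrong as written: the holonomy construction of Remark \ref{monodromyvaisman} gives $M \cong \widetilde{Q}\times_{{\text{Hol}}}S^{1}$, i.e. $M = \mathscr{C}(\widetilde{Q})/\Gamma$ with $\Gamma$ the fibered product $\{(\gamma,t)\in \pi_{1}(Q)\times\mathbb{R} \ | \ t \equiv {\text{hol}}(\gamma)\}$, which is in general \emph{not} cyclic, and the cone is over the universal cover $\widetilde{Q}$, not over $Q$ as you wrote. When the holonomy image is infinite (it can be dense in $S^{1}$), your appeal to Proposition \ref{fromsasakitovaisman} fails, since that result requires a cyclic $\Gamma_{\lambda,\varphi}$ over a compact Sasakian base; the repair is to invoke Remark \ref{Vaismanfromsasaki} instead, which allows an arbitrary $\Gamma$ commuting with the flow of $\frac{\partial}{\partial\psi}$ with $\rho_{Q}(\Gamma)\neq 1$. (ii) More substantively, your route produces \emph{some} Vaisman structure on the total space, whereas the theorem asserts that the \emph{given} $(g,J)$ is Vaisman. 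You must use the compatibility implicit in the statement --- the fibers of $\pi$ are the leaves of $\mathcal{F}_{A}$ of the given structure and the Sasakian data on $Q$ is the induced one --- at which point the honest argument is the local one: in a flat trivialization $\theta$ is exact and $g$ splits as $\pi^{\ast}g_{Q} + \theta\otimes\theta$, hence $\theta$ is parallel and $\mathcal{F}_{A}$ is regular. Your second ``technical point'' (normality of the descended structure) is real but handled exactly by the K\"{a}hler-cone characterization you cite.

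Finally, the last clause: as literally stated, the restriction of $\pi$ to a leaf of $\mathcal{F}_{A}$ is a constant map, since those leaves are the fibers. You silently replaced $\pi$ by the K\"{a}hler covering $\mathscr{C}\to M$ restricted to cone lines; that computation is correct (the line is the universal cover of the circle leaf), but it proves a reinterpreted claim. You should flag the degeneracy of the literal statement and either adopt your reading explicitly or read $\mathcal{F}_{A}$ as $\mathcal{F}_{B}$, whose leaves $\pi$ maps onto the Reeb orbits of $Q$.
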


\begin{remark}
\label{monodromyvaisman}
It is worth pointing out that from the above theorem it follows that for every flat connection $\beta \in \Omega^{1}(M,\mathfrak{u}(1))$ on a compact regular Vaisman manifold $M$, we can consider 
\begin{equation}
\label{holonomyrep}
\text{Hol}_{\beta} \colon \pi_{1}(Q,x_{0}) \to S^{1}, 
\end{equation}
where $\text{Hol}_{\beta} \colon [\gamma] \mapsto {\mathrm{e}}^{\int_{\widetilde{\gamma}}\beta}$, denotes the holonomy representation induced by $\beta$, here $\widetilde{\gamma} \colon S^{1} \to M$ denotes the horizontal lift of $\gamma \colon S^{1} \to Q$ with respect to $\beta \in \Omega^{1}(M,\mathfrak{u}(1))$. From this, we obtain that 

\begin{center}
$M \cong \widetilde{Q} \times_{\text{Hol}_{\beta}} S^{1} =  \big (\widetilde{Q} \times S^{1} \big )/\pi_{1}(Q,x_{0})$,
\end{center}
such that $p \colon \widetilde{Q} \to Q$ is the universal covering space of $Q$, notice that the twisted product above is obtained by considering the action of $\pi_{1}(Q,x_{0})$ on $\widetilde{Q}$ by deck transformations and the action of $\pi_{1}(Q,x_{0})$ on $S^{1}$ through the holonomy representation (Eq. (\ref{holonomyrep})) induced by $\beta$. Since we have a canonical flat connection $\beta = \sqrt{-1} \theta$, induced by the Lee form $\theta \in \Omega^{1}(M)$, we can realize $M$ as an associated bundle to the (trivial) $S^{1}$-principal bundle defined by the Vaisman manifold $\widetilde{Q} \times S^{1}$, see for instance \cite[Chapter 4]{Tondeur}.
\end{remark}

The next result describes the properties of the leaves of the characteristic foliation $\mathcal{F}$ of a compact Vaisman manifold.

\begin{theorem}[Vaisman, \cite{Vaisman}]
\label{blikekahler}
The characteristic foliation $\mathcal{F}$ is a complex analytic 1-dimensional foliation on $(M,g,J)$ with complex parallelizable leaves which are totally geodesic and locally flat submanifolds of $M$. The metric of $M$ is a bundle-like transversally K\"{a}hlerian metric with respect to $\mathcal{F}$.

\end{theorem}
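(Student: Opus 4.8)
The entire argument will be driven by the single structural consequence of the Vaisman condition that the Lee form is parallel: since $\nabla\theta\equiv 0$ and $\theta=g(A,\cdot)$, the Lee field is itself parallel, $\nabla A\equiv 0$, and by Equation \ref{equationfoliation} the pair $\{A,B=JA\}$ consists of commuting Killing fields with $|A|=|B|=1$ and $g(A,B)=0$. First I would record that $\mathcal{F}=\mathrm{span}\{A,B\}$ is $J$-invariant (as $JA=B$, $JB=-A$) and involutive (as $[A,B]=0$), so it is a rank-two integrable distribution on which $J$ restricts; since $J$ is integrable, each leaf inherits this $J$ as a complex structure and becomes a one-dimensional complex submanifold. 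To promote this to a \emph{complex analytic} foliation I would pass to the $(1,0)$-field $A^{1,0}=\tfrac{1}{2}(A-\sqrt{-1}B)$, which is holomorphic precisely because $\mathscr{L}_A J=\mathscr{L}_B J=0$ makes the flows of $A$ and $B$ biholomorphisms; then $\mathcal{F}$ is spanned by a nowhere-vanishing holomorphic vector field, hence holomorphic, and the very same field trivialises the holomorphic tangent bundle of each leaf, yielding complex parallelizability.

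For total geodesy and local flatness I would compute all covariant derivatives in the frame $\{A,B\}$. Two vanish immediately from $\nabla A\equiv 0$, namely $\nabla_A A=\nabla_B A=0$. For the other two I would exploit that $B$ is Killing, so $X\mapsto\nabla_X B$ is $g$-skew; evaluating the relation $g(\nabla_X B,Y)=-g(\nabla_Y B,X)$ at $X=A$ and $X=B$, and using $g(\nabla_Y B,A)=Y\!\left(g(B,A)\right)=0$ together with $g(\nabla_Y B,B)=\tfrac{1}{2}Y\!\left(|B|^{2}\right)=0$, gives $\nabla_A B=\nabla_B B=0$. Thus every covariant derivative of a frame field along a frame field vanishes, so the second fundamental form of each leaf is identically zero (totally geodesic), and since $[A,B]=0$ the induced curvature $R^{L}(A,B)$ vanishes as well (flat).

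The transverse assertions hinge on the form $\omega_0:=\Omega-\theta\wedge\vartheta$, where $\vartheta=B^\flat$. A short interior-product computation gives $\iota_A\omega_0=\iota_B\omega_0=0$, and since $A,B$ are Killing and preserve $J$ one gets $\mathscr{L}_A\omega_0=\mathscr{L}_B\omega_0=0$, so $\omega_0$ is basic and descends, on local leaf spaces, to the transverse fundamental form, which is positive of type $(1,1)$ for $J|_{\mathcal{F}^{\perp}}$. To see that $g$ is bundle-like I would verify the criterion $(\mathscr{L}_V g)(Y,Z)=0$ for leaf-tangent $V$ and horizontal foliate $Y,Z$: writing $V=uA+vB$, the Leibniz correction terms each carry a factor $g(A,\cdot)$ or $g(B,\cdot)$ on horizontal arguments and hence vanish, while $\mathscr{L}_A g=\mathscr{L}_B g=0$. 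Finally, for transverse K\"ahlerity I must show $d\omega_0=0$. Rewriting the structure equation $d\Omega=\theta\wedge\Omega$ through $\Omega=\theta\wedge\vartheta+\omega_0$ and $d\theta=0$ gives $d\omega_0=\theta\wedge(\omega_0+d\vartheta)$; on the other hand, applying $dK^\flat=2\,g(\nabla_{\cdot}K,\cdot)$ to the Killing field $K=B$ and using $\nabla_A B=\nabla_B B=0$ yields $\iota_A d\vartheta=\iota_B d\vartheta=0$, so contracting the previous display with $\iota_A$ gives $\iota_A d\omega_0=\omega_0+d\vartheta$; basicness forces $\iota_A d\omega_0=0$, hence $d\vartheta=-\omega_0$ and therefore $d\omega_0=\theta\wedge(\omega_0+d\vartheta)=0$.

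I expect the genuine obstacle to lie in this last step: one must orchestrate the computations so that the interplay between the structure equation $d\Omega=\theta\wedge\Omega$ and the basic decomposition $\Omega=\theta\wedge\vartheta+\omega_0$ pins down $d\vartheta$ and $d\omega_0$ simultaneously, and one must confirm that basicness genuinely descends $\omega_0$ to a bona fide K\"ahler form on the leaf space rather than merely a closed positive transverse $(1,1)$-form. By contrast, the statements that $\mathcal{F}$ is a holomorphic foliation with complex-parallelizable, totally geodesic, flat leaves are essentially forced by $\nabla A\equiv 0$ and the Killing property, and should go through routinely.
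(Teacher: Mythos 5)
Your proposal is correct and follows essentially the same route as the paper: the paper's (sketched) proof rests precisely on Equation \ref{equationfoliation} together with the identities $\nabla_{A}A = \nabla_{A}B = \nabla_{B}A = \nabla_{B}B = 0$, which you derive from $\nabla\theta\equiv 0$ and the Killing property, and your identification $d\vartheta = -\omega_{0}$ is the same computation as the paper's remark $0 = \mathscr{L}_{A}\Omega = \|\theta\|^{2}\Omega - \theta\wedge\vartheta + d\vartheta$ (your contraction $\iota_{A}d\omega_{0}=\mathscr{L}_{A}\omega_{0}-d\iota_{A}\omega_{0}$ is Cartan's formula in disguise). The only divergence is that you verify directly, via Reinhart's criterion, the bundle-like and transversally K\"{a}hlerian assertions that the paper delegates to \cite[Theorem 6.2]{Chen}, which makes your argument self-contained but not materially different.
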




\begin{remark}[Canonical Vaisman structure]
\label{canonicalvaisman}
As we have seen previously, we have a canonical globally conformal K\"{a}hler structure $(\widetilde{\Omega}, J_{\mathscr{C}})$ on a K\"{a}hler cone $\mathscr{C}(Q)$ over a Sasakian manifold $Q$, such that
\begin{equation}
\label{globalconformal1}
\widetilde{\Omega} =  \frac{d\eta}{2} + \displaystyle d\psi \wedge \eta \ \ \ \ \ {\text{and}} \ \ \ \ \ \widetilde{g} =  g_{Q} + d\psi \otimes d\psi.
\end{equation}
By considering
\begin{equation}
\label{structure}
\widetilde{\theta} = -2d\psi \ \ \ \ \ {\text{and}} \ \ \ \ \ \displaystyle g_{Q} = \frac{1}{2}d\eta({\rm{id}}\otimes \phi) + \eta \otimes \eta, 
\end{equation}
it follows that 
\begin{equation}
\label{globalconformal}
\widetilde{g} = \frac{1}{2}d\eta({\rm{id}}\otimes \phi) + \frac{1}{2} \Big (\eta - \sqrt{-1}d\psi \Big) \odot \Big (\eta + \sqrt{-1} d\psi \Big).
\end{equation}
Therefore, if we suppose $Q$ as being compact, from Proposition \ref{fromsasakitovaisman} we have that the l.c.K. manifold $M_{\varphi,\lambda} = (\mathscr{C}(Q), \Gamma_{\varphi,\lambda})$, presented by some $\Gamma_{\varphi,\lambda} \subset \mathcal{H}(\mathscr{C}(Q))$, is a compact Vaisman manifold with Vaisman structure $(\Omega, J, \theta)$ induced from $(\widetilde{\Omega}, J_{\mathscr{C}}, \widetilde{\theta})$, notice that $\widetilde{\nabla}\widetilde{\theta} \equiv 0$, where $\widetilde{\nabla}$ is the Levi-Civita connection associated to $\widetilde{g}$.

\end{remark}

The next result provides a precise description of compact strongly regular Vaisman manifolds in terms of principal toroidal bundles, see for instance \cite{Vaisman}, \cite{Chen}.

\begin{theorem}
\label{vaismanclass}
Let $(M,g,J)$ be a compact connected strongly regular Vaisman manifold. Then the projection

\begin{center}

$\pi \colon M \to M/\mathcal{F},$

\end{center}
defines a $T_{\mathbbm{C}}^{1}$-principal bundle over a compact Hodge manifold $N =  M/\mathcal{F}$. Moreover, $M$ also defines a flat $S^{1}$-principal bundle over a compact Sasaki manifold $Q = M/\mathcal{F}_{A}$, which in turn can be realized as a $S^{1}$-principal bundle over $N$ whose Chern class differs only by a torsion element from the Chern class of the induced $T_{\mathbbm{C}}^{1}$-principal bundle defined by $M$.

\end{theorem}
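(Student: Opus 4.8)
The plan is to construct both fibrations out of the commuting pair $A = \theta^{\sharp}$ and $B = JA$, and then to compare the resulting Chern classes by Chern--Weil theory. First I would record that $Z = A - \sqrt{-1}B$ is a global holomorphic vector field of type $(1,0)$: since $B = JA$ one checks $JZ = \sqrt{-1}Z$. By Equation \ref{equationfoliation} the real fields $A,B$ are commuting holomorphic Killing fields, and on the compact manifold $M$ they are complete, so they integrate to an $\mathbb{R}^{2}$-action by holomorphic isometries whose orbits are exactly the leaves of $\mathcal{F}$. By Theorem \ref{blikekahler} these leaves are compact, complex parallelizable, locally flat complex curves, hence complex $1$-tori $\mathbb{C}/\Lambda_{x}$. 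Strong regularity lets me apply the Lemma of Palais: $N = M/\mathcal{F}$ is a compact Hausdorff manifold and $\pi$ is smooth. The isotropy groups of the $\mathbb{R}^{2}$-action are the period lattices $\Lambda_{x}$, which I would argue are locally constant (the action is isometric and the leaf space is a manifold) and hence constant over the connected $M$, producing a single complex torus $T_{\mathbb{C}}^{1} = \mathbb{C}/\Lambda$ acting freely with the leaves as orbits. This realizes $\pi \colon M \to N$ as a principal $T_{\mathbb{C}}^{1}$-bundle, and since $A,B$ are holomorphic the complex structure descends to make $N$ a complex manifold and $\pi$ holomorphic.

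Next I would descend the transversal Kähler structure of Theorem \ref{blikekahler}. From the local expression Equation \ref{localvaisman} and the identity $-d\vartheta = \sqrt{-1}\,h_{a\overline{b}}\,dz^{a}\wedge d\overline{z}^{b}$, the form $-d\vartheta$ is basic and equals $\pi^{\ast}\omega_{N}$ for a Kähler form $\omega_{N}$ on $N$. Because $\omega_{N}$ is (a multiple of) the curvature of the circle bundle attached to the connection $\vartheta$, its class is integral; thus $N$ is a compact Kähler manifold with integral Kähler class, i.e. a Hodge manifold. For the second fibration I would invoke Theorem \ref{flatvaisman} directly: strong regularity forces $\mathcal{F}_{A}$ to be regular, so $M \to Q = M/\mathcal{F}_{A}$ is a flat principal $S^{1}$-bundle over a compact connected Sasaki manifold $Q$. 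Since $\mathcal{F} = \mathcal{F}_{A}\oplus\mathcal{F}_{B}$, collapsing the residual $B$-foliation on $Q$ gives $N = Q/\mathcal{F}_{B} = M/\mathcal{F}$, exhibiting $Q \to N$ as the Boothby--Wang $S^{1}$-fibration of the Sasaki manifold $Q$ over the Hodge base $N$, with connection $\vartheta$ and curvature $d\vartheta = -\pi^{\ast}\omega_{N}$.

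Finally I would compare Chern classes through the complex connection $\Psi = \theta + \sqrt{-1}\vartheta$ on $M \to N$. Using $\|\theta\| = 1$ and $A \perp B$ one computes $\Psi(A) = 1$ and $\Psi(B) = \sqrt{-1}$, so $\Psi$ is a genuine $\mathbb{C}$-valued (i.e. $\mathfrak{t}_{\mathbb{C}}^{1}$-valued) connection form reproducing the fundamental fields. Its curvature is $d\Psi = \sqrt{-1}\,d\vartheta = -\sqrt{-1}\,\pi^{\ast}\omega_{N}$, because $d\theta = 0$. Hence, under the topological splitting $T_{\mathbb{C}}^{1} \cong S^{1}\times S^{1}$ into the $\theta$-circle and the $\vartheta$-circle, the real curvature lies entirely in the $\vartheta$-direction: the $\theta$-factor subbundle (obtained by collapsing the $B$-orbits) has vanishing real curvature, so its integral Euler class is torsion, while the $\vartheta$-factor (obtained by collapsing the $A$-orbits, $M/\mathcal{F}_{A} = Q$) is precisely the Boothby--Wang bundle $Q \to N$. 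Therefore $c_{1}(Q \to N)$ and the Chern class of the $T_{\mathbb{C}}^{1}$-bundle $M \to N$ coincide modulo the torsion class of the flat $\theta$-factor, which is exactly the assertion.

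I expect the \textbf{main obstacle} to be the first step: upgrading a regular foliation whose leaves happen to be compact complex tori into a genuine principal bundle with a \emph{single} structure torus, that is, proving that the period lattice $\Lambda_{x}$ is independent of the leaf. Everything downstream --- the descent of the complex and Kähler structures, the Boothby--Wang description of $Q \to N$, and the curvature identity $d\Psi = -\sqrt{-1}\,\pi^{\ast}\omega_{N}$ forcing the $\theta$-factor to be torsion --- is a standard, essentially formal Chern--Weil computation once the bundle structure and the connection $\Psi$ are in place.
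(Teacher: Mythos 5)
The paper itself offers no proof of Theorem \ref{vaismanclass}: it is imported verbatim from \cite{Vaisman} and \cite{Chen}, so your proposal has to be measured against the classical argument. Your overall architecture is the right one — the commuting holomorphic Killing pair $A,B$ from Equation \ref{equationfoliation}, Palais's lemma for compactness of leaves and the manifold structure on $N$, Boothby--Wang on $Q$, and Chern--Weil via $\Psi = \theta + \sqrt{-1}\vartheta$, whose identities $\Psi(A)=1$, $\Psi(B)=\sqrt{-1}$, $\Psi\circ J=\sqrt{-1}\Psi$ and $d\Psi=\sqrt{-1}\,d\vartheta$ you compute correctly. But the step you yourself flag as the main obstacle, constancy of the period lattice $\Lambda_x$, is not established by the parenthetical you offer: lattices in $\mathbb{R}^{2}$ form a continuum, so a continuously varying isotropy lattice of an isometric action need not be locally constant, even over a manifold leaf space. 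The actual mechanism has three ingredients: the $A$-periods are constant because $\nabla\theta=0$ (or more directly because Theorem \ref{flatvaisman} already hands you $M\to Q$ as a principal $S^{1}$-bundle, so the $A$-circle action need not be re-derived); the Reeb periods on $Q$ are constant by Boothby--Wang applied to the regular Sasaki structure; and the remaining mixed generator of $\Lambda_x$ — the displacement along the $A$-circle after traversing one closed Reeb orbit — is constant because it is the holonomy of the \emph{flat} bundle $M\to Q$ around the fibres of $Q\to N$, holonomy of a flat connection is a homotopy invariant, and nearby fibres are freely homotopic. Your sketch contains no trace of this third ingredient, and without it the claim of a single structure torus is unproved.

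The second genuine gap is in the Chern-class step, where you assume a splitting $T_{\mathbb{C}}^{1}\cong S^{1}\times S^{1}$ into a ``$\theta$-circle'' and a ``$\vartheta$-circle'', i.e. $\Lambda=(\Lambda\cap\mathbb{R})\oplus(\Lambda\cap\sqrt{-1}\mathbb{R})$. This fails in strongly regular examples: for the Hopf surface $(\mathbb{C}^{2}\setminus\{0\})/\langle\lambda\rangle$ with $\lambda=|\lambda|{\mathrm{e}}^{2\pi\sqrt{-1}p/q}$, $\gcd(p,q)=1$, $q>1$, one has $\Lambda = 2\pi\sqrt{-1}\,\mathbb{Z} + \big(\log|\lambda|+2\pi\sqrt{-1}\,p/q\big)\mathbb{Z}$, and the sublattice $(\Lambda\cap\mathbb{R})\oplus(\Lambda\cap\sqrt{-1}\mathbb{R})$ has index $q$ in $\Lambda$; the map $S_{A}\times S_{B}\to T_{\mathbb{C}}^{1}$ is then only a $q$-fold covering, so ``collapsing the $B$-orbits'' does not produce a complementary factor of the bundle (and in the non-split situation produced by an irrational mixed holonomy, the $B$-orbits need not even close, so the quotient is not defined at all). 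As written, your argument would prove the stronger assertion that $M\to N$ splits as the Boothby--Wang bundle times a genuinely flat circle bundle — which is false in general, and which would make the ``differs only by a torsion element'' qualifier in the statement superfluous. The correct comparison works with the quotient homomorphism $T_{\mathbb{C}}^{1}\to T_{\mathbb{C}}^{1}/S_{A}$ (which exhibits $Q=M/\mathcal{F}_{A}$ over $N$) and pairs the $\sqrt{-1}\mathbb{R}$-valued curvature $d\Psi=-\sqrt{-1}\,\pi^{\ast}\omega_{N}$ against a basis of the dual lattice $\Lambda^{\ast}$; the discrepancy the theorem allows comes precisely from the finite-covering defect just described together with torsion in $H^{2}(N,\mathbb{Z})$, not from a vanishing curvature component. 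Once the lattice-constancy argument and this dual-lattice bookkeeping are supplied, the rest of your outline (descent of $J$, integrality of $[\omega_{N}]$, the Boothby--Wang identification of $Q\to N$) is sound.
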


The converse of Theorem \ref{vaismanclass} is given by the following result.

\begin{theorem}[\cite{Tsukada}]
\label{vaismanbundle}
Let $\pi \colon M \to N$ be a holomorphic $T_{\mathbbm{C}}^{1}$-principal bundle over a compact K\"{a}hler manifold. If $M$ admits a connection $\Psi$ with the following properties:

\begin{enumerate}

\item $\Psi \circ J = \sqrt{-1} \Psi$,

\item there exists a K\"{a}hler form $\omega$ on $N$, such that $d\Psi = c \pi^{\ast} \omega$ for a nonzero real constant $c$,
\end{enumerate}
then $M$ admits a Vaisman structure $(\Omega,J,\theta)$, such that the leaves of the associate characteristic foliation $\mathcal{F}$ coincides with the fibers of the bundle.

\end{theorem}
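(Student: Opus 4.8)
The plan is to write the candidate metric down explicitly, verify it is Hermitian and l.c.K.\ with a closed Lee form, and then promote l.c.K.\ to Vaisman by the elementary ``closed $+$ Killing dual $\Rightarrow$ parallel'' mechanism. First I would split the connection into real and imaginary parts, $\Psi = a + \sqrt{-1}\,b$. Hypothesis (1), $\Psi\circ J = \sqrt{-1}\,\Psi$, says exactly that $\Psi$ is of type $(1,0)$, equivalently $b = -a\circ J$; it also guarantees that the vertical distribution $\mathcal V = \ker d\pi$ and the horizontal distribution $\mathcal H = \ker\Psi$ are both $J$-invariant, that $\Psi|_{\mathcal V}\colon \mathcal V \to \mathbb C$ is a complex-linear isomorphism, and that $\pi_\ast\colon \mathcal H \xrightarrow{\sim} TN$ is complex-linear. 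The crucial elementary observation comes from hypothesis (2): since $d\Psi = c\,\pi^\ast\omega$ is a \emph{real} (indeed basic) $2$-form while $d\Psi = da + \sqrt{-1}\,db$, comparing real and imaginary parts forces
\begin{equation}
da = c\,\pi^\ast\omega \quad\text{and}\quad db = 0 .
\end{equation}
Thus $b = \operatorname{Im}\Psi$ is automatically closed, and this single fact is what ultimately makes the structure Vaisman rather than merely l.c.K.

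Next I would define the metric, following the formula already recorded in the introduction, $g = \tfrac12\big(d\Psi(\operatorname{id}\otimes J) + \Psi\odot\overline\Psi\big)$, and check it is a genuine Riemannian metric: on $\mathcal H$ it equals $c\,\pi^\ast g_N$ (with $g_N$ the Kähler metric of $\omega$), positive definite once the sign of $c$ matches the Kähler-form convention, and on $\mathcal V$ it equals $a\otimes a + b\otimes b$, positive definite there, with the two blocks $g$-orthogonal. Hermitian symmetry $g(J\cdot,J\cdot)=g(\cdot,\cdot)$ follows block-by-block from $\omega$ being $(1,1)$ and from $b = -a\circ J$. A short computation then gives the fundamental form $\Omega = g(J\otimes\operatorname{id}) = c\,\pi^\ast\omega + a\wedge b$, and, using $d(\pi^\ast\omega)=0$ together with $da = c\,\pi^\ast\omega$ and $db=0$,
\begin{equation}
d\Omega = da\wedge b - a\wedge db = c\,\pi^\ast\omega\wedge b = b\wedge\Omega .
\end{equation}
Hence $(M,g,J)$ is l.c.K.\ with closed Lee form $\theta = b = \operatorname{Im}\Psi$.

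The heart of the argument is to upgrade this to a Vaisman structure. I would first note that $g$ is invariant under the structure group: $\Psi$ is a connection form for the abelian (hence $\operatorname{Ad}$-trivial) group $T^1_{\mathbb C}$, so $\Psi$ and $\Psi\odot\overline\Psi$ are invariant, $d\Psi = c\,\pi^\ast\omega$ is basic, and $J$ is preserved because the bundle is holomorphic; therefore $T^1_{\mathbb C}$ acts by holomorphic isometries. I would then identify $\theta^\sharp$: since $g$ is block-diagonal and $\theta = b$ vanishes on $\mathcal H$, the field $\theta^\sharp$ is vertical, and $g(\,\cdot\,,\theta^\sharp) = b$ pins it down to be the fundamental vector field $\zeta$ with $\Psi(\zeta)=\sqrt{-1}$. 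Being a fundamental vector field of an isometric action, $\theta^\sharp$ is Killing, so $\mathcal L_{\theta^\sharp} g = 0$; and $\theta$ is closed, so $d\theta = 0$. Feeding both into the Koszul identity already used in Section~2,
\begin{equation}
2g(\nabla_X\theta^\sharp, Y) = d\theta(X,Y) + (\mathcal L_{\theta^\sharp} g)(X,Y) = 0 ,
\end{equation}
for all $X,Y$, whence $\nabla\theta = 0$ and $(M,g,J)$ is Vaisman. Finally, the characteristic foliation $\mathcal F$ is spanned by $A = \theta^\sharp$ and $B = JA$, both vertical (fundamental vector fields), so $\mathcal F$ is exactly the vertical foliation and its leaves are the fibers of $\pi$, as claimed.

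I expect the only real friction to be bookkeeping of signs and normalizing constants, in particular ensuring the horizontal block of $g$ is positive definite, which fixes the admissible sign of $c$ relative to the conventions for $\omega$ and $\Omega$ (the opposite sign being handled symmetrically by passing to $\overline\Psi$). As an alternative to the Koszul computation, one could instead pull the structure back to the Kähler covering and recognize it as the canonical globally conformally Kähler structure of a Sasaki cone described in Remark~\ref{canonicalvaisman}, then invoke Proposition~\ref{fromsasakitovaisman} and Remark~\ref{Vaismanfromsasaki}; but the direct ``closed $+$ Killing $\Rightarrow$ parallel'' route is shorter and self-contained.
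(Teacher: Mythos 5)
Your proposal is correct and follows essentially the same route as the paper: the same splitting $\Psi = \Re(\Psi) + \sqrt{-1}\,\Im(\Psi)$ with $d\big(\Re(\Psi)\big) = c\,\pi^{\ast}\omega$ and $d\big(\Im(\Psi)\big) = 0$, the same metric $g = \frac{1}{2}\big(d\Psi(\mathrm{id}\otimes J) + \Psi\odot\overline{\Psi}\big)$ with the computation $d\Omega = 2\Im(\Psi)\wedge\Omega$, and the parallelism step you supply (the Lee field is a fundamental vector field of the isometric holomorphic $T_{\mathbb{C}}^{1}$-action, hence Killing, and Koszul with $d\theta = 0$ gives $\nabla\theta = 0$) is exactly the mechanism the paper delegates to \cite{Tsukada} and uses itself in Sections 2 and 5. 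The only discrepancies are bookkeeping, as you anticipated: with the $\frac{1}{2}$-normalization the Lee form is $\theta = 2\Im(\Psi)$ rather than $\Im(\Psi)$, and the case $c<0$ is better handled by flipping the sign of the horizontal block of $g$ (keeping $J$) than by passing to $\overline{\Psi}$, which would conjugate the complex structure.
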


Notice that, under the hypotheses of Theorem \ref{vaismanbundle}, one can write $\Psi \in \Omega^{1}(M;\mathbbm{C})$ as 
\begin{equation}
\label{connectiontorus}
\Psi = \mathfrak{Re}(\Psi) + \sqrt{-1} \mathfrak{Im}(\Psi),
\end{equation}
such that $\mathfrak{Re}(\Psi), \mathfrak{Im}(\Psi) \in \Omega^{1}(M)$. It follows from condition (1) of Theorem \ref{vaismanbundle} that $\mathfrak{Re}(\Psi) \circ J = - \mathfrak{Im}(\Psi)$. Moreover, from condition (2) of Theorem \ref{vaismanbundle} it can be easily derived that 
\begin{equation}
\label{connectioncond2}
d\big (\mathfrak{Re}(\Psi) \big) = c\pi^{\ast} \omega \ \ \ \ {\text{and}} \ \ \ \ d\big (\mathfrak{Im}(\Psi) \big) = 0,
\end{equation}
for a basic K\"{a}hler form $\omega \in \Omega^{1,1}(N)$. Thus, from $\Psi \in \Omega^{1}(M;\mathbbm{C})$ one can define a Hermitian metric on $M$ by setting
\begin{equation}
\label{bundlemetric}
g = \frac{1}{2} \Big (d\Psi({\text{id}} \otimes J) + \Psi \odot \overline{\Psi} \Big).
\end{equation}
It is straightforward to see that the fundamental form $\Omega = g(J\otimes {\rm{id}})$ satisfies
\begin{equation}
\Omega = \frac{c}{2}\pi^{\ast}\omega - \mathfrak{Im}(\Psi) \wedge \mathfrak{Re}(\Psi) \Longrightarrow d\Omega =  2\mathfrak{Im}(\Psi) \wedge \Omega.
\end{equation}
Hence, from the aforementioned fundamental $2$-form, it follows that the Lee form $\theta \in \Omega^{1}(M)$ associated to the l.c.K. structure defined by the connection $\Psi$ (Eq. ($\ref{connectiontorus}$)) is given by
\begin{equation}
\label{Leefrombundle}
\theta = 2\mathfrak{Im}(\Psi).
\end{equation}
Also, it can be easily shown that $\nabla \theta = 0$, where $\nabla$ is the Levi-Civita connection associated to the Riemannian metric defined in Eq. (\ref{bundlemetric}), see \cite[Theorem 4.1]{Tsukada} for more details.

\subsection{Negative line bundles and principal elliptic bundles}
\label{negativeelliptic}
In this subsection, following \cite[p. 23]{Tsukada}, we shall discuss in details how to construct Vaisman manifolds by means of negative line bundles over compact Hodge manifolds. In order to do this, let us recall some basic facts and generalities related to holomorphic line bundles over compact complex manifolds.

\begin{definition}
\label{Griffithsnegative}
Let $L \in {\text{Pic}}(X)$ be a holomorphic line bundle over a compact complex manifold $X$. We say that $L$ is (Griffiths) negative if there exists a system of trivializations $f_{i} \colon L|_{U_{i}} \to U_{i} \times \mathbbm{C}$ (with transition functions $g_{ik}$) and a Hermitian metric $H$ on $L$, such that 

\begin{enumerate}

\item $H(f_{i}^{-1}(z,w),f_{i}^{-1}(z,w)) = h_{i}(z)|w|^{2}$, where $h_{i} \colon U_{i} \to \mathbbm{R}^{+}$ is a smooth function,

\item ${\rm{K}}_{H} \colon {\text{Tot}}(L^{\times}) \to \mathbbm{R}^{+}$, ${\rm{K}}_{H}(u) = H(u,u)$, is strictly plurisubharmonic.

\end{enumerate}

\end{definition}

Under the hypotheses of the definition above the following characterization will be important for our approach.
\begin{proposition}[\cite{NEGATIVELINEBUNDLE}]
\label{Gnegative}
A line bundle $L \in {\text{Pic}}(X)$ is (Griffiths) negative if only if there exists a system of positive smooth functions $\varrho_{i} \colon U_{i} \to \mathbbm{R}^{+}$, such that: 

\begin{enumerate}

\item $-\log(\varrho_{i})$ is strictly plurisubharmonic on $U_{i}$,

\item $\varrho_{i} = |g_{ik}| \cdot \varrho_{k}$ on $U_{i} \cap U_{k}$.

\end{enumerate}
\end{proposition}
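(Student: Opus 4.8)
The plan is to set up an explicit dictionary between a Hermitian metric $H$ as in Definition \ref{Griffithsnegative} and the system $\{\varrho_i\}$, and then reduce the whole equivalence to a single Levi-form computation on the total space $\mathrm{Tot}(L^{\times})$. Given the trivializations $f_i$ with transition functions $g_{ik}$ (so that a point $u$ of $L^\times$ has fiber coordinates related by $w_i = g_{ik}w_k$ on $U_i \cap U_k$), any metric of the stated form is encoded by its positive smooth weights $h_i \colon U_i \to \mathbb{R}^{+}$. The requirement that the local expressions $h_i(z)|w_i|^2$ glue to a globally defined $H$ is exactly the cocycle compatibility $h_k = |g_{ik}|^2 h_i$. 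I would therefore define $\varrho_i \ce h_i^{-1/2}$; then $\varrho_i = |g_{ik}|\varrho_k$ is merely a rewriting of this compatibility, so condition (2) is equivalent, independently of any positivity of curvature, to $\{h_i\}$ being the weights of a well-defined Hermitian metric (the positivity $h_i>0$ matching $\varrho_i>0$).

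The heart of the matter is condition (1), and the key step is to compute the complex Hessian of $\mathrm{K}_H$ in the natural coordinates $(z,w)$ on $L^\times|_{U_i} \cong U_i \times \mathbb{C}^{\times}$, where $\mathrm{K}_H(z,w) = h_i(z)\, w\overline{w}$. Writing $h = h_i$ and differentiating, the $(n+1)\times(n+1)$ Levi form splits into the blocks
\[
\begin{pmatrix} (\partial_a \partial_{\overline{b}} h)\, w\overline{w} & (\partial_a h)\, w \\ (\partial_{\overline{b}} h)\, \overline{w} & h \end{pmatrix},
\]
whose fiber-direction entry $\partial_w\partial_{\overline w}\mathrm{K}_H = h$ is strictly positive. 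Taking the Schur complement of this last entry produces exactly
\[
w\overline{w}\Big( \partial_a \partial_{\overline{b}} h - \tfrac{(\partial_a h)(\partial_{\overline{b}} h)}{h}\Big) = w\overline{w}\, h\, \big( \partial_a \partial_{\overline{b}} \log h \big),
\]
the last equality being the pointwise identity $h\,\partial_a\partial_{\overline b}\log h = \partial_a\partial_{\overline b} h - (\partial_a h)(\partial_{\overline b}h)/h$. Since $w \neq 0$ on $L^\times$ and $h>0$, positive definiteness of the full Levi form is equivalent to positive definiteness of $(\partial_a\partial_{\overline{b}}\log h_i)$, i.e. to strict plurisubharmonicity of $\log h_i$, equivalently of $-\log\varrho_i = \tfrac12\log h_i$ on $U_i$. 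This single computation powers both implications.

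With it in hand both directions are immediate. For the forward implication, if $L$ is Griffiths negative then $\mathrm{K}_H$ is strictly plurisubharmonic on all of $\mathrm{Tot}(L^\times)$, hence on each $L^\times|_{U_i}$, so the computation forces $-\log\varrho_i$ to be strictly plurisubharmonic, giving (1); condition (2) was already noted, and the $\varrho_i$ are positive and smooth because the $h_i$ are. Conversely, given $\{\varrho_i\}$ satisfying (1)--(2), I would set $h_i \ce \varrho_i^{-2}>0$ and define $H$ fiberwise by $H(f_i^{-1}(z,w),f_i^{-1}(z,w)) = h_i(z)|w|^2$; condition (2) guarantees these local formulas glue to a genuine Hermitian metric, and running the Levi-form computation in reverse shows that $\mathrm{K}_H$ is strictly plurisubharmonic, so $L$ is Griffiths negative.

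The only real obstacle is the Levi-form computation itself: one must work on the total space rather than the base, so the fiber coordinate $w$ enters, and it is precisely the off-diagonal mixed $z$--$w$ terms that, through the Schur complement, convert the naive $\partial\overline{\partial}h$ into the correct curvature quantity $h\,\partial\overline{\partial}\log h$. Once this bookkeeping of the cross terms is carried out correctly, the positivity equivalence, and hence the proposition, follow with no further input; note that compactness of $X$ plays no role here, since the statement is local on the base.
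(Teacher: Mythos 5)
Your proposal is correct, and since the paper states Proposition \ref{Gnegative} with only a citation to \cite{NEGATIVELINEBUNDLE} and offers no proof of its own, there is nothing in the text for it to diverge from; your argument is the standard one from that reference. Your dictionary $\varrho_{i} = h_{i}^{-1/2}$ and the cocycle identification of condition (2) agree exactly with the conventions the paper adopts afterwards (Remark \ref{griffithsequiv} sets $h_{i} = 1/\varrho_{i}^{2}$, and Equation \ref{collectionofequ} is your gluing relation $h_{k} = |g_{ik}|^{2}h_{i}$), and your Schur-complement computation of the Levi form of ${\rm{K}}_{H}(z,w) = h_{i}(z)w\overline{w}$ --- with the fiber entry $h>0$, cross terms $(\partial_{a}h)w$, and complement $w\overline{w}\,h\,\partial_{a}\partial_{\overline{b}}\log h_{i}$ --- is the correct and complete reduction of strict plurisubharmonicity of ${\rm{K}}_{H}$ on $L^{\times}$ (where $w \neq 0$) to strict plurisubharmonicity of $-\log \varrho_{i} = \tfrac{1}{2}\log h_{i}$ on $U_{i}$, which powers both implications as you say; the same positivity reappears later in the paper in polar coordinates via $\tfrac{\sqrt{-1}}{2}\partial\overline{\partial}{\rm{K}}_{H} = d(r^{2}\eta/2)$, consistent with your computation.
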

In the setting of the above proposition, we have the Chern class of $L \in {\text{Pic}}(X)$ is represented by the real $(1,1)$-form $\omega \in \Omega^{1,1}(X)$ defined locally by
\begin{equation}
\label{connection}
\displaystyle \omega|_{U_{i}} = \frac{ \sqrt{-1}}{\pi}  \partial \overline{\partial}\log(\varrho_{i}).
\end{equation}
Thus, since for $L \in {\text{Pic}}(X)$ Griffiths negative we have $-\log(\varrho_{i})$ strictly plurisubharmonic on $U_{i}$, it follows that $\omega$ is a negative $(1,1)$-form, which implies that $L^{-1}$ is positive in the sense of Kodaira (e.g. \cite{Lazarsfeld}). It is straightforward to show that a positive line bundle $L$ (in the sense of Kodaira) has its inverse $L^{-1}$ Griffiths negative.

\begin{remark}
\label{griffithsequiv}
Notice that, given a Griffiths negative line bundle $L \in {\text{Pic}}(X)$ with a Hermitian structure $H$, since on each open set $L|_{U_{i}}$ we have

\begin{center}
$H(f_{i}^{-1}(z,w),f_{i}^{-1}(z,w)) = h_{i}(z)|w|^{2},$
\end{center}
by setting $h_{i} = \frac{1}{\varrho_{i}^{2}}$, from the Chern connection $\nabla |_{U_{i}} = d + H^{-1}\partial H $ induced form the Hermitian structure $H$, it follows that $c_{1}(L,\nabla) = [\omega]$, where $\omega$ is given in Eq. (\ref{connection}). It is worth pointing out that this description of $c_{1}(L)$ does not depend on the choice of the Hermitian structure. For further details on this subject we suggest \cite{NEGATIVELINEBUNDLE}, \cite{DANIEL}, \cite{Lazarsfeld}.

\end{remark}

Given a compact complex manifold $X$, and a negative line bundle $L \in {\text{Pic}}(X)$, by fixing a Hermitian metric on $L$ as in Remark \ref{griffithsequiv}, one can consider the sphere bundle\footnote{Here we have $L^{\times} := L \backslash \{{\text{zero section}}\}$}
\begin{equation}
\label{spherebundle}
Q(L) = \Big \{ u \in L^{\times} \ \Big | \ \sqrt{H(u,u)} = 1 \Big \}.
\end{equation}
By taking a Chern connection $\nabla$ on $L$ induced from $H$, we obtain a system of $(1,0)$-forms $A_{i} \in \Omega^{1,0}(U_{i})$, such that $\nabla |_{U_{i}} = d + A_{i} $. If one sets $\mathcal{A}_{i} = \frac{1}{2}(A_{i} - \overline{A}_{i})$, on each $U_{i} \subset X$, then one obtains a connection $1$-form $\eta' \in \Omega^{1}(Q(L);\mathfrak{u}(1))$, locally given by 
\begin{equation}
\label{principalconnection}
\displaystyle \eta' = \pi^{\ast}\mathcal{A}_{i} + \sqrt{-1}d\sigma_{i},
\end{equation}
such that $\pi \colon Q(L) \to X$ is the natural $S^{1}$-principal bundle projection, see \cite[Proposition 1.4]{KN}. Moreover, it follows that 
\begin{equation}
\frac{\sqrt{-1}}{2 \pi} d\eta' = \frac{\sqrt{-1}}{2\pi} \pi^{\ast} F_{\nabla},
\end{equation}
where $F_{\nabla}$ is the curvature of $\nabla$. Since $L$ is negative, one can define a contact structure on $Q(L)$ by setting $\eta = -\sqrt{-1} \eta'$. 

\begin{remark}
Let us recall that an almost contact manifold is a (2n+1)-dimensional smooth manifold $M$ endowed with structure tensors $(\phi, \xi,\eta)$, such that $\phi \in {\text{End}}(TM)$, $\xi \in \Gamma(TM)$, and $\eta \in \Omega^{1}(M)$, satisfying
\begin{equation}
\label{almostcontact}
 \phi \circ \phi = - {\rm{id}} + \eta \otimes \xi, \ \ \eta(\xi) = 1.    
\end{equation}
An almost contact structure is said to be normal if additionally satisfies $[\phi,\phi]+ d\eta \otimes \xi = 0$, where $[\phi,\phi]$ denotes the Nijenhuis torsion of $\phi$, see for instance \cite[Chapter 6]{Blair}.
\end{remark}

Now we consider the following result.

\begin{theorem}[\cite{MORIMOTO}, \cite{HATAKEYMA}]
\label{almostcircle}
Let $Q$ be a principal ${\rm{U}}(1)$-bundle over a complex manifold $(N,J')$. Suppose we have a connection $1$-form $\sqrt{-1}\eta$ on $Q$, such that $d\eta = \pi^{\ast}\omega$. Here $\pi$ denotes the projection of $Q$ onto $N$, and $\omega$ is a $2$-form on $N$ satisfying 

\begin{center}

$\omega(J'X,J'Y) = \omega(X,Y),$

\end{center}
for $X,Y \in \Gamma(TN)$. Then, we can define a $(1,1)$-tensor field $\phi$ on $Q$ and a vector field $\xi$ on $Q$, such that $(\phi,\xi,\eta)$ is a normal almost contact structure on $Q$.
\end{theorem}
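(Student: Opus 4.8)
The plan is to read the almost contact structure directly off the connection-induced splitting of $TQ$, and then to reduce the normality identity to two inputs: the integrability of $J'$ on $N$ and the curvature relation $d\eta = \pi^{\ast}\omega$.

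First I would use the connection $1$-form $\sqrt{-1}\eta$ to split $TQ = \mathcal{H}\oplus\mathcal{V}$, where $\mathcal{V} = \ker(d\pi)$ is the vertical distribution and $\mathcal{H} = \ker(\eta)$ is the horizontal one. Let $\xi\in\Gamma(TQ)$ be the fundamental vector field of the principal ${\rm{U}}(1)$-action, normalized by $\eta(\xi)=1$ (automatic for a principal connection form). I would then define the $(1,1)$-tensor by $\phi(Y) = \big(J'\,d\pi(Y)\big)^{h}$ for $Y\in T_{q}Q$, where $(\cdot)^{h}$ denotes horizontal lift; thus $\phi$ is the horizontal lift of $J'$ on $\mathcal{H}$ and $\phi(\xi)=0$. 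Using $J'^{2}=-{\rm{id}}$ one checks at once that $\phi^{2} = -{\rm{id}} + \eta\otimes\xi$, so $(\phi,\xi,\eta)$ is an almost contact structure.

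The substance of the proof is the normality condition $[\phi,\phi] + d\eta\otimes\xi = 0$ (cf. Equation \ref{sasakicondition}). Since this is tensorial, I would verify it on the frame formed by horizontal lifts $X^{h},W^{h}$ and the vertical $\xi$. Two identities control everything: invariance of horizontal lifts under the structure group gives $[\xi, X^{h}] = 0$, which makes the mixed case $(X^{h},\xi)$ vanish term by term; and the structure equation gives $\eta([X^{h},W^{h}]) = -d\eta(X^{h},W^{h}) = -\omega(X,W)$, so that $[X^{h},W^{h}] = [X,W]^{h} - \omega(X,W)\xi$. Substituting these into the four terms of $[\phi,\phi](X^{h},W^{h})$ and using the hypothesis $\omega(J'X,J'W)=\omega(X,W)$, the horizontal part reassembles into the horizontal lift of the Nijenhuis tensor of $J'$, namely $\big([J'X,J'W]-J'[J'X,W]-J'[X,J'W]-[X,W]\big)^{h}$, which vanishes because $(N,J')$ is a complex manifold; the vertical part equals $-\omega(X,W)\xi$. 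As $d\eta(X^{h},W^{h})\xi = \omega(X,W)\xi$, the two cancel and the normality identity holds.

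I expect the bookkeeping in the horizontal case to be the only real obstacle: one must keep careful track of how each of the four terms of $[\phi,\phi]$ decomposes into horizontal and vertical components, and recognize that the horizontal pieces fit together into the Nijenhuis tensor of $J'$ while every vertical piece originates from the single curvature term $-\omega(X,W)\xi$. The two hypotheses play complementary roles: $J'$-invariance of $\omega$ is exactly what aligns the $\omega(J'X,J'W)$ contribution, and integrability of $J'$ removes the horizontal obstruction, so that the remaining vertical term is killed precisely by $d\eta\otimes\xi$.
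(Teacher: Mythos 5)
Your proof is correct and follows essentially the same route as the source: the paper states this theorem as a citation to Morimoto and Hatakeyama without reproving it, and your tensors $\phi$ and $\xi$ coincide exactly with the structure tensors the paper records in \ref{sasakistructure}. Your verification — tensoriality of $[\phi,\phi]+d\eta\otimes\xi$, the identities $[\xi,X^{h}]=0$ and $[X^{h},W^{h}]=[X,W]^{h}-\omega(X,W)\xi$, integrability of $J'$ annihilating the horizontal part of $[\phi,\phi]$, and $J'$-invariance of $\omega$ producing the vertical term $-\omega(X,W)\xi$ that cancels against $d\eta\otimes\xi$ — is precisely the standard argument of the cited references, stated with the same convention for $d\eta$ as in \ref{sasakicondition}.
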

If we apply Theorem \ref{almostcircle} above on $Q(L)$ (Eq. (\ref{spherebundle})) we obtain a normal almost contact structure $(\phi,\xi,\eta)$ on $Q(L)$, such that 
\begin{equation}
\label{sasakistructure}
\eta = -\sqrt{-1} \eta', \ \  \xi = \displaystyle \frac{\partial}{\partial \sigma}, \ \  \phi(X) := \begin{cases}
    (J'\pi_{\ast}X)^{H}, \ \ \ {\text{if}}  \ \ X \bot \xi,\\
    \ \ \ \ \  0 \  \  \ \ \ \ \ ,  \ \  \ {\text{if}} \ \ X \parallel \xi,                   \\
  \end{cases}    
\end{equation}
here we denote by $(J'\pi_{\ast}X)^{H}$ the horizontal lift of $J'\pi_{\ast}X$ relative to the connection $\sqrt{-1}\eta \in \Omega^{1}(Q(L);\mathfrak{u}(1))$. Moreover, observing that $d\eta = \pi^{\ast}\omega_{X}$, where $\omega_{X} \in \Omega^{1,1}(X)$, such that $-\omega_{X} \in 2\pi c_{1}(L)$, i.e., $\omega_{X}$ is a K\"{a}hler form on $X$, we can define a Riemannian metric on $Q(L)$ by setting
\begin{equation}
\label{Sasakimetric}
g_{Q(L)} = \frac{1}{2}d\eta(\text{id}\otimes \phi) + \eta \otimes \eta.
\end{equation}
It is straightforward to show that $(g_{Q(L)},\phi,\xi,\eta)$ defines a contact metric structure on $Q(L)$, and from the normality of $(\phi,\xi,\eta)$ we conclude that $(g_{Q(L)},\phi,\xi,\eta)$ is in fact a Sasaki structure on $Q(L)$, see \cite{BoyerGalicki} for more details. Hence, one can associate to every negative line bundle $L$ over a compact complex manifold $X$ a Sasaki manifold $Q(L)$, with structure tensors $(g_{Q(L)},\phi,\xi,\eta)$ completely determined by a principal connection $\sqrt{-1}\eta \in \Omega^{1}(Q(L); \mathfrak{u}(1))$. 

In the above setting, by considering the K\"{a}hler cone $(\mathscr{C}(Q(L)),g_{\mathscr{C}},J_{\mathscr{C}})$, we have the following identification
\begin{equation}
\label{coneandtot}
{\text{Tot}}(L^{\times}) \cong Q(L) \times \mathbbm{R}^{+}.  
\end{equation}
The above identification is defined by 
\begin{equation}
u \in {\text{Tot}}(L^{\times}) \mapsto \bigg (\frac{u}{\sqrt{{\rm{K}}_{H}(u)}}, \sqrt{{\rm{K}}_{H}(u)} \bigg) \in Q(L) \times \mathbbm{R}^{+},
\end{equation}
where ${\rm{K}}_{H} \colon {\text{Tot}}(L^{\times}) \to \mathbbm{R}^{+}$, such that ${\rm{K}}_{H}(u) = H(u,u)$, see Definition \ref{Griffithsnegative}. Under the above identification we can realize $(\mathscr{C}(Q(L)),J_{\mathscr{C}})$ as a holomorphic $\mathbbm{C}^{\times}$-principal bundle over $X$ for which the underlying $\mathbbm{C}^{\times}$-action is defined by the flow (up to scale) of the holomorphic vector field $\xi -\sqrt{-1}J_{\mathscr{C}}(\xi)$\footnote{Notice that $r\partial_{r}$ is real holomorphic, i.e. $\mathscr{L}_{r\partial_{r}}J_{\mathscr{C}} = 0$, which implies that that $\xi = J_{\mathscr{C}}(r\partial_{r})$ is real holomorphic, so we have that $\xi - \sqrt{-1}J_{\mathscr{C}}(\xi)$ is holomorphic, see for instance \cite{Moroianukahler}.}, so the identification given in Eq. (\ref{coneandtot}) is an isomorphism of holomorphic $\mathbbm{C}^{\times}$-principal bundles. Further, by taking $r^{2} = {\rm{K}}_{H}$, from the aforementioned identification we obtain
\begin{equation}
\label{gpotential}
 \frac{\sqrt{-1}}{2} \partial \overline{\partial} {\rm{K}}_{H} = d\Big(\frac{r^{2}\eta}{2}\Big),
\end{equation}
which implies that ${\rm{K}}_{H} \colon {\text{Tot}}(L^{\times}) \to \mathbbm{R}^{+}$ defines a global K\"{a}hler potential. In fact, locally we have 
\begin{center}
${\rm{K}}_{H}(z,w) = {\rm{K}}_{H}(f_{i}^{-1}(z,w)) = h_{i}(z)w\overline{w} = {\mathrm{e}}^{\log(h_{i}(z))}w\overline{w}$,

\end{center}
where $f_{i} \colon L|_{U_{i}} \to U_{i} \times \mathbbm{C}$ is a local trivialization. Thus, it follows that 
$$\displaystyle \partial \overline{\partial} {\rm{K}}_{H} = {\mathrm{e}}^{\log(h_{i})}|w|^{2}\bigg [ \partial \log(h_{i}) \wedge \overline{\partial} \log(h_{i}) + \Big (\frac{dw}{w} \Big) \wedge \overline{\partial}\log(h_{i}) + \partial \log(h_{i}) \wedge \Big (\frac{d \overline{w}}{\overline{w}} \Big ) + \partial \overline{\partial} \log(h_{i}) + \Big (\frac{dw}{w} \Big) \wedge \Big(\frac{d \overline{w}}{\overline{w}} \Big ) \bigg].$$
Since we have $r^{2} = {\rm{K}}_{H}$, we obtain 
\begin{equation}
\label{polar}
w = r {\mathrm{e}}^{-\frac{\log(h_{i})}{2}}{\mathrm{e}}^{\sqrt{-1}\sigma_{i}},
\end{equation}
which implies that 
\begin{center}
$\displaystyle \frac{dw}{w} = \frac{dr}{r} - \frac{d\log(h_{i})}{2} + \sqrt{-1}d\sigma_{i}$, \ \ and \ \  $\displaystyle \frac{d \overline{w}}{\overline{w}} = \frac{dr}{r} - \frac{d\log(h_{i})}{2} - \sqrt{-1}d\sigma_{i}$,
\end{center}
so we obtain

\begin{center}
$\displaystyle \partial \overline{\partial} {\rm{K}}_{H} = r^{2} \bigg [ \frac{dr}{r} \wedge \big ( (\overline{\partial} - \partial)\log(h_{i}) -2 \sqrt{-1}d\sigma_{i} \big)  + \partial \overline{\partial} \log(h_{i}) \bigg]$.
\end{center}
Since $h_{i} = \frac{1}{\varrho_{i}^{2}}$, see Remark \ref{griffithsequiv}, it follows from Eq. (\ref{connection}), and Eq. (\ref{principalconnection}), that 
\begin{center}

$\displaystyle \frac{\sqrt{-1}}{2} \partial \overline{\partial} {\rm{K}}_{H} = rdr \wedge \eta + \frac{r^{2}}{2}d\eta = d\Big(\frac{r^{2}\eta}{2} \Big)$.

\end{center}
Hence, we have the equality given in Eq. (\ref{gpotential}). From these last computations, we can use the characterization given in Eq. (\ref{coneandtot}) to construct elliptic fibrations over $X$ as follows. Given $\lambda \in \mathbbm{C}^{\times}$, such that $|\lambda| < 1$, we can consider the infinite cyclic group 
\begin{equation}
\label{cyclicgroup}
\Gamma_{\lambda} = \big \{ \lambda^{n} \in \mathbbm{C}^{\times} \ \big | \ n \in \mathbbm{Z}  \big \}.
\end{equation}
Since $L^{\times}$ defines a $\mathbbm{C}^{\times}$-principal bundle over $X$, we can consider the action of $\Gamma_{\lambda} \subset \mathbbm{C}^{\times}$ on ${\text{Tot}}(L^{\times})$ by restriction. From the identification ${\text{Tot}}(L^{\times}) = \mathscr{C}(Q(L))$, it follows that
\begin{equation}
\label{principalholomorphic}
u \cdot a \mapsto  \bigg ( \frac{u}{\sqrt{{\rm{K}}_{H}(u)}} \frac{a}{|a|}, |a|\sqrt{{\rm{K}}_{H}(u)} \bigg),
\end{equation}
$\forall a \in \Gamma_{\lambda}$, and $\forall u \in {\text{Tot}}(L^{\times})$. Therefore, considering the polar form $\lambda = |\lambda|{\mathrm{e}}^{ \sqrt{-1}\sigma(\lambda)}$, we have that the action of $\Gamma_{\lambda}$ on ${\text{Tot}}(L^{\times})$ corresponds to the action of $\Gamma_{|\lambda|,\sigma(\lambda)}$ on $\mathscr{C}(Q(L))$ (cf. Eq. (\ref{grouppresentation})), where $\sigma(\lambda) \colon Q(L) \to Q(L)$ is defined by
\begin{center}
$\sigma(\lambda) \colon x \mapsto x \cdot {\mathrm{e}}^{ \sqrt{-1}\sigma(\lambda)}$,
\end{center}
$\forall x \in Q(L)$, i.e. $\sigma(\lambda)$ is an automorphism which preserves the Sasakian structure of $Q(L)$ induced by the flow of $\xi$. From this, we can consider $\varphi_{\lambda} \colon \mathscr{C}(Q(L)) \to \mathscr{C}(Q(L))$, such that $\varphi_{\lambda}(x,s) = (\sigma(\lambda)(x), |\lambda|s)$, $\forall (x,s) \in \mathscr{C}(Q(L))$, and describe the group $\Gamma_{\lambda}$ by

\begin{center}

$\Gamma_{\lambda} = \Big \{  \varphi_{\lambda}^{n} \in {\rm{Diff}}\big(\mathscr{C}(Q(L)) \big) \ \ \Big | \ \ n \in \mathbbm{Z}\Big\}$.

\end{center}
It is straightforward to check that \footnote{It is a consequence of the fact that $\xi$ and $r\partial_{r}$ are real holomorphic vector fields, so their associated flow are biholomorphism of $(\mathscr{C}(Q(L)),J_{\mathscr{C}})$. From this, it is not difficult to see that $\varphi_{\lambda} \colon \mathscr{C}(Q(L)) \to \mathscr{C}(Q(L))$ is obtained from the flow of $\xi$ and $r\partial_{r}$.} $ \Gamma_{\lambda} \subset \mathcal{H}(\mathscr{C}(Q(L)))$, and also that $\rho_{Q(L)}(\Gamma_{\lambda}) \neq 1$, see Remark \ref{Vaismanfromsasaki}. Further, if we consider $\psi := \log(r) \colon \mathscr{C}(Q(L)) \to \mathbbm{R}$, we can check that every $\varphi_{\lambda}^{n} \in \Gamma_{\lambda}$ transforms the coordinate $\psi$ by
\begin{center}
$\psi \mapsto \psi + n \log(|\lambda|)$.
\end{center}
Thus, by taking the flow $\Phi_{t}$ of $\frac{\partial}{\partial \psi}$, it follows that 

\begin{center}
$\Phi_{t} \big (\varphi_{\lambda}^{n}(x,\psi) \big) = \big(x\cdot {\mathrm{e}}^{ \sqrt{-1}\sigma(\lambda)}, \psi + n \log(|\lambda|) + t \big) = \varphi_{\lambda}^{n}\big (\Phi_{t}(x,\psi) \big)$,
\end{center}
which implies that $\Phi_{t} \circ \varphi_{\lambda}^{n} = \varphi_{\lambda}^{n} \circ \Phi_{t}$. Therefore, we have that
\begin{equation}
\label{conepresentation}
M_{\Gamma_{\lambda}} = {\text{Tot}}(L^{\times})/\Gamma_{\lambda} = \big (\mathscr{C}(Q(L)),\Gamma_{\lambda} \big ),
\end{equation}
defines a Vaisman manifold, see Remark \ref{Vaismanfromsasaki} or \cite[Proposition 5.4]{Gini}. Now let us briefly discuss the aspects of $M_{\Gamma_{\lambda}}$ as a principal bundle over $X$. At first, consider 
\begin{equation}
\displaystyle \varpi = \frac{\log(\lambda)}{2\pi \sqrt{-1}},
\end{equation}
here, as before, we have $\lambda \in \mathbbm{C}^{\times}$, such that $|\lambda| < 1$. From this, it follows that the imaginary part of $\varpi$ is a positive real number. Now, we can take the lattice
\begin{equation}
\Lambda = \mathbbm{Z} + \varpi \mathbbm{Z},
\end{equation}
and consider the complex elliptic curve
\begin{equation}
\mathbbm{E}(\Lambda) = \mathbbm{C} / \Lambda.
\end{equation}
From the natural projection map $\pi_{\Lambda} \colon \mathbbm{C} \to \mathbbm{E}(\Lambda)$, we can define a homomorphism $h \colon \mathbbm{C}^{\times} \to \mathbbm{E}(\Lambda)$, such that 
\begin{equation}
h(w) = \pi_{\Lambda} \bigg ( \frac{\log(w)}{2\pi \sqrt{-1}}\bigg),
\end{equation}
$\forall w \in  \mathbbm{C}^{\times}$. Since $\ker(h) = \Gamma_{\lambda}$, it follows that
\begin{equation}
\mathbbm{E}(\Lambda) \cong \mathbbm{C}^{\times}/\Gamma_{\lambda}.
\end{equation}
Therefore, since $L^{\times}$ is a $\mathbbm{C}^{\times}$-principal bundle over $X$, it follows that 
\begin{equation}
\label{ellipticprincipal}
M_{\Gamma_{\lambda}} ={\text{Tot}}(L^{\times})/\Gamma_{\lambda} = \big (L^{\times} \times \mathbbm{E}(\Lambda) \big) / \mathbbm{C}^{\times}.
\end{equation}
Thus, we have $M_{\Gamma_{\lambda}}$ realized as an $\mathbbm{E}(\Lambda)$-bundle over $X$, e.g. \cite{KN}. Notice that, since $\Gamma_{\lambda} \subset \mathbbm{C}^{\times}$ is a normal subgroup, and $\mathbbm{E}(\Lambda) = T_{\mathbbm{C}}^{1}$, we have that $M_{\Gamma_{\lambda}}$ is in fact a $T_{\mathbbm{C}}^{1}$-principal bundle over $X$, see for instance \cite[Proposition 2.2.20]{Gund}.

\subsection{Vaisman structure from principal connections}\label{mainremark}
In order to provide an explicit description for Vaisman structures on $M_{\Gamma_{\lambda}}$ in terms of its principal bundle features, we proceed as follows: Considering $M_{\Gamma_{\lambda}} $ as a principal elliptic bundle over $X$ (Eq. (\ref{ellipticprincipal})), we can construct a connection on $M_{\Gamma_{\lambda}}$ satisfying the conditions of Theorem \ref{vaismanbundle} from a $\mathbbm{C}^{\times}$-principal connection on $L^{\times}$. In order to see that, we define  $\widetilde{\Psi} \in \Omega^{1}(L^{\times};\mathbbm{C})$,
such that
\begin{equation}
\label{potentiallee}
\mathfrak{Re}(\widetilde{\Psi}) = \frac{1}{2}d^{c}\log({\rm{K}}_{H}), \ \ {\text{and}} \ \ \mathfrak{Im}(\widetilde{\Psi}) = -\frac{1}{2}d \log({\rm{K}}_{H}),
\end{equation}
where $d^{c} = \sqrt{-1}( \overline{\partial} - \partial)$. Notice that from the above differential $1$-forms we can recover the canonical globally conformally K\"{a}hler structure on ${\text{Tot}}(L^{\times})$ given in Remark \ref{canonicalvaisman}. Actually, if we consider ${\text{Tot}}(L^{\times}) = \mathscr{C}(Q(L))$, by setting $r^{2} = {\rm{K}}_{H}$, it follows that $\widetilde{\theta} = -2d\psi$, where $\psi = \log(r)$ (see Eq. (\ref{structure})). Moreover, we have
\begin{equation}
\label{contactlee}
\displaystyle d^{c}\log({\rm{K}}_{H}(z,w)) =  \sqrt{-1}( \overline{\partial} - \partial) \log(h_{i}(z)) - \frac{\sqrt{-1}}{|w|^{2}}\big (\overline{w}dw - wd\overline{w} \big),
\end{equation}
where $(z,w)$ are local coordinates in $L^{\times}|_{U_{i}}$, see Definition \ref{Griffithsnegative}. Since $w \in \mathbbm{C}^{\times}$, it can be represented by polar coordinates as in Eq. (\ref{polar}), which provides
\begin{center}
$\displaystyle \frac{\sqrt{-1}}{|w|^{2}}\big (\overline{w}dw - wd\overline{w} \big) = -2d\sigma_{i}$.
\end{center}
Therefore, by rearranging the expression given in Eq. (\ref{contactlee}), from Eq. (\ref{principalconnection}) we can write
\begin{equation}
\label{contactlocal}
\displaystyle d^{c}\log({\rm{K}}_{H}(z,w)) = 2 \bigg [ \frac{\sqrt{-1}}{2}(\partial -\overline{\partial})\log(\varrho_{i}^{2}) + d\sigma_{i} \bigg ] = 2\eta.
\end{equation}
Hence, we obtain from $\widetilde{\Psi} \in \Omega^{1}(L^{\times};\mathbbm{C})$ (Eq. (\ref{potentiallee})) that
\begin{equation}
\displaystyle \widetilde{\Omega} = \frac{1}{2} d \big (\mathfrak{Re}(\widetilde{\Psi})\big ) - \mathfrak{Im}(\widetilde{\Psi}) \wedge \mathfrak{Re}(\widetilde{\Psi}),
\end{equation}
see Eq. (\ref{globalconformal1}). From this, we see that the differential $1$-forms in Eq. (\ref{potentiallee}) define completely the globally conformally K\"{a}hler structure on ${\text{Tot}}(L^{\times}) = \mathscr{C}(Q(L))$ presented in Remark \ref{canonicalvaisman}. In order to verify that $\widetilde{\Psi}$ defined above is a $\mathbbm{C}^{\times}$-principal connection, we observe that:
\begin{enumerate}

\item $a^{\ast} \widetilde{\Psi} = \widetilde{\Psi}$, $\forall a \in \mathbbm{C}^{\times}$,

\item $v \mapsto \frac{v}{2}(\xi - \sqrt{-1}J_{\mathscr{C}}(\xi))$, $\forall v \in \mathbbm{C} = {\text{Lie}}(\mathbbm{C}^{\times})$.

\end{enumerate}
The item (1) above follows from the following. Considering Eq. (\ref{principalholomorphic}), since $a = ({\mathrm{e}}^{ \sqrt{-1}\sigma(a)}, \tau_{|a|})$, where ${\mathrm{e}}^{ \sqrt{-1}\sigma(a)}$ preserves the underlying Sasakian structure of $Q(L)$, and $\tau_{|a|} \colon \mathscr{C}(Q(L)) \to \mathscr{C}(Q(L))$ is a homothety of dilation factor $|a|$, see Eq. (\ref{dilatation}), we have
\begin{center}
$a^{\ast}\eta = \eta$, \ \ and \ \ $a^{\ast}(d\psi) = d(\psi + \log(|a|)).$
\end{center}
Thus, observing that $\widetilde{\Psi} = \eta - \sqrt{-1}d\psi$, we obtain item (1). For item (2), it is straightforward to check that the map in item (2) is the infinitesimal action of ${\text{Lie}}(\mathbbm{C}^{\times})$, recall that $\xi - \sqrt{-1}J_{\mathscr{C}}(\xi)$ is a holomorphic vector field on $\mathscr{C}(Q(L))$. Therefore, one can easily verify that $\widetilde{\Psi}(\widetilde{v}) = v$, where $\widetilde{v} = \frac{v}{2}(\xi - \sqrt{-1}J_{\mathscr{C}}(\xi))$ is the fundamental vector field corresponding to some $v \in {\text{Lie}}(\mathbbm{C}^{\times})$. Since $\eta = - \frac{dr}{r} \circ J_{\mathscr{C}}$, and writing $\widetilde{\Psi} = \eta - \sqrt{-1}\frac{dr}{r}$, it follows that
\begin{equation}
\label{vaismanconnection}
\displaystyle \widetilde{\Psi} \circ J_{\mathscr{C}}  = \eta \circ J_{\mathscr{C}} - \sqrt{-1} \frac{dr}{r} \circ J_{\mathscr{C}} = \sqrt{-1} \widetilde{\Psi}.
\end{equation}
From the theory of connections on principal bundles \cite[ Section 10]{Nomizu}, we have that the $\mathbbm{C}^{\times}$-principal connection $\widetilde{\Psi} \in \Omega^{1}(L^{\times};\mathbbm{C})$ described above induces a unique $T_{\mathbbm{C}}^{1}$-principal connection $\Psi \in \Omega^{1}(M_{\Gamma_{\lambda}};\mathbbm{C})$, such that 
\begin{equation}
\label{pullback}
\wp^{\ast}\Psi = \widetilde{\Psi},
\end{equation}
where $\wp \colon {\text{Tot}}(L^{\times}) \to {\text{Tot}}(L^{\times})/\Gamma_{\lambda}$ is the natural projection. Moreover, since the induced complex structure $J$ on ${\text{Tot}}(L^{\times})/\Gamma_{\lambda}$ satisfies $\wp_{\ast} \circ J_{\mathscr{C}} = J \circ \wp_{\ast}$, it follows that $\Psi \circ J = \sqrt{-1}\Psi$. If we denote by $p_{\Lambda} \colon M_{\Gamma_{\lambda}} \to X$ the projection map, it follows that
\begin{equation}
d\Psi = p_{\Lambda}^{\ast} \omega_{X},
\end{equation}
such that $\omega_{X} = -\sqrt{-1}\partial \overline{\partial}\log(\varrho_{i}^{2})$ is a K\"{a}hler form on $X$, see Eq. (\ref{contactlocal}), and Eq. (\ref{connection}). Notice also that, in this case, we have $d\eta = \pi^{\ast}\omega_{X}$. Therefore, by applying Theorem \ref{vaismanbundle}, we have a Vaisman structure $(g,J)$ on $M_{\Gamma_{\lambda}}$, such that $J$ is the complex structure induced from $J_{\mathscr{C}}$, and 
\begin{equation}
\displaystyle g =  \frac{1}{2} \Big (d\Psi({\text{id}} \otimes J) + \Psi \odot \overline{\Psi} \Big ),
\end{equation}
where $\Psi \in \Omega^{1}(M_{\Gamma_{\lambda}};\mathbbm{C})$ is a $T_{\mathbbm{C}}^{1}$-principal connection. Further, the Lee form $\theta \in \Omega^{1}(M_{\Gamma_{\lambda}})$ in this case is given by $\theta = 2\mathfrak{Im}(\Psi)$, see Eq. \ref{Leefrombundle}, and we have
\begin{equation}
\label{globalpotlee}
\wp^{\ast}\theta = 2\mathfrak{Im}(\widetilde{\Psi}) = - d \log({\rm{K}}_{H}), 
\end{equation}
thus we obtain $\widetilde{\theta} = -2d\psi = \wp^{\ast}\theta$, where $\psi = \log(r)$ (cf. Remark \ref{canonicalvaisman}). It is worth pointing out that from Eq. (\ref{pullback}) we have
\begin{equation}
\wp^{\ast}\Omega = \displaystyle d\psi \wedge \eta + \frac{d\eta}{2}, \ \ {\text{and}} \ \ \wp^{\ast}g =  g_{Q(L)} + d\psi \otimes d\psi,
\end{equation}
where $g_{Q(L)}$ is the Sasaki metric given in Eq. (\ref{Sasakimetric}). Therefore, the Vaisman structure obtained from the  $\mathbbm{C}^{\times}$-principal connection
\begin{equation}
\label{connectionlee}
\widetilde{\Psi} = \frac{1}{2}\big (d^{c} - \sqrt{-1}d\big )\log({\rm{K}}_{H}) = -\sqrt{-1} \partial \log({\rm{K}}_{H}),
\end{equation}
is exactly the canonical Vaisman structure obtained from the K\"{a}hler cone $\mathscr{C}(Q(L))$, see Remark \ref{canonicalvaisman}.

\section{Generalities on Hermitian-Weyl geometry}

In this section, we provide a brief account on Weyl manifolds and Einstein-Weyl metrics. In what follows, by following \cite{Gauduchon2} and \cite{PedersenPoonSwann}, we shall provide a complete characterization of Hermitian-Einstein-Weyl manifolds with strongly regular underlying Vaisman structure in terms of the geometry of certain Calabi-Yau cones associated to Sasaki-Einstein manifolds. As we shall see, this last characterization fits in the context of principal elliptic bundles described in the previous section. 

\subsection{Hermitian-Weyl structures and l.c.K. manifolds} Recall that a conformal structure on a manifold $M$ is an equivalence class \footnote{Here, we say that two Riemannian metrics $g$ and $g'$ are equivalent if and only if $g' = {\rm{e}}^{f}g$, where $f$ is a smooth function on $M$.} $[g ]$ of Riemannian metrics on $M$, such that 
\begin{equation}
[g ] = \Big \{ {\rm{e}}^{f}g  \ \ \Big | \ \  f \in C^{\infty}(M) \Big \}.
\end{equation}
A manifold with a conformal structure is called a conformal manifold. Given a conformal manifold $(M,[g])$, we have the following notion of compatible connection with the conformal class $ [g]$.

\begin{definition}
\label{weylconnectiondef}
A Weyl connection $D$ on a conformal manifold $(M,[g])$ is a torsion-free connection which preserves the conformal class $[g]$. In this last setting, we say that $D$ defines a Weyl structure on $(M,[g])$ and that $(M,[g],D)$ is a Weyl manifold.
\end{definition}

In the above definition by preserving the conformal class it means that for each $g' \in [g]$, we have a $1$-form $\theta_{g'}$ (Higgs field), such that 
\begin{equation}
\label{higgs}
Dg' = \theta_{g'} \otimes g'.
\end{equation}
Note that Eq. (\ref{higgs}) is conformally invariant in the sense that, if $h = {\mathrm{e}}^{f}g'$, for some $f \in C^{\infty}(M)$, then
\begin{equation}
\label{conformalchange}
\theta_{h} = \theta_{g'} + df.
\end{equation}
From this, under a conformal change $g \mapsto {\mathrm{e}}^{f}g$, we have $\theta_{g} \mapsto \theta_{g} + df$, $\forall f \in C^{\infty}(M)$. Conversely, if we start with a Riemannian manifold $(M,g)$ and a fixed $1$-form $\theta \in \Omega^{1}(M)$, denoting $A := \theta^{\sharp}$, the connection
\begin{equation}
\label{WeylLevi}
D = \nabla^{g} - \frac{1}{2} \Big (\theta \odot {\text{id}} - g \otimes A \Big ),
\end{equation}
is a Weyl connection which preserves the conformal class of g. For the sake of simplicity, in the above setting we shall denote by $\nabla = \nabla^{g}$ when the choice of the Riemannian metric is implicit.

Let $(M,[g],D)$ be a Weyl manifold, in what follows we shall fix a representative $g$ for the underlying conformal class and consider the $1$-form $\theta_{g}$ which defines its Higgs field.
\begin{definition}
We say that a Weyl manifold $(M,[g],D)$ is a Hermitian-Weyl manifold if it admits an almost complex structure $J \in {\text{End}}(TM)$, which satisfies:

\begin{enumerate}

\item $g(JX,JY) = g(X,Y)$, $\forall X,Y \in \Gamma(TM);$

\item $DJ = 0.$

\end{enumerate}

\end{definition}

An important result to be considered in the setting of Hermitian-Weyl manifolds is the following proposition.

\begin{proposition}[Vaisman]
\label{HweylVaisman}
Any Hermitian-Weyl manifold of (real) dimension at least $6$ is l.c.K.. Conversely, a l.c.K. manifold of (real) dimension at least $4$ admits a compatible Hermitian-Weyl structure.
\end{proposition}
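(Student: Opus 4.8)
The plan is to treat the two implications separately, since each rests on a different mechanism. For the first implication, I would start from a Hermitian-Weyl manifold $(M,[g],D,J)$, fix the representative $g$ together with its Higgs field $\theta_{g}$ satisfying $Dg = \theta_{g}\otimes g$, and form $\Omega = g(J\otimes\text{id})$. The first step is to derive $d\Omega = \theta_{g}\wedge\Omega$ purely from the defining properties of $D$: since $\Omega(X,Y) = g(JX,Y)$, the Leibniz rule gives $(D_{Z}\Omega)(X,Y) = (D_{Z}g)(JX,Y) + g((D_{Z}J)X,Y)$, and because $DJ=0$ and $Dg=\theta_{g}\otimes g$ this reduces to $D\Omega = \theta_{g}\otimes\Omega$; as $D$ is torsion-free, antisymmetrizing recovers $d\Omega = \theta_{g}\wedge\Omega$. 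Applying $d$ once more and using $\theta_{g}\wedge\theta_{g}=0$ yields $0 = d(d\Omega) = d\theta_{g}\wedge\Omega$. The crucial step is then to conclude $d\theta_{g}=0$: when $\dim_{\mathbb{R}}(M) = 2n \geq 6$ the Lefschetz-type map $\bigwedge^{2}T^{\ast}M \to \bigwedge^{4}T^{\ast}M$, $\omega\mapsto\omega\wedge\Omega$, is injective, so $d\theta_{g}\wedge\Omega = 0$ forces $d\theta_{g}=0$. Once $\theta_{g}$ is closed it is locally exact, $\theta_{g}|_{U}=df_{U}$, and a direct computation from $d\Omega=\theta_{g}\wedge\Omega$ shows $d(\mathrm{e}^{-f_{U}}\Omega|_{U})=0$, so each $g_{U}=\mathrm{e}^{-f_{U}}g|_{U}$ is K\"{a}hler and $(M,g,J)$ is l.c.K.\ with Lee form $\theta_{g}$.

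For the converse, I would run the construction backwards. Given an l.c.K.\ manifold $(M,\Omega,J,\theta)$ with $g=\Omega(\text{id}\otimes J)$, I define $D = \nabla^{g} - \tfrac{1}{2}\big(\theta\odot\text{id} - g\otimes\theta^{\sharp}\big)$ as in \ref{WeylLevi}. The symmetry of the correction term in its two covariant slots makes $D$ torsion-free, and a short computation gives $Dg = \theta\otimes g$, so $D$ is a Weyl connection for $[g]$ with Higgs field $\theta$. It remains to verify $DJ=0$, and here the l.c.K.\ hypothesis enters. On each chart the metric $g_{U}=\mathrm{e}^{-f_{U}}g$ is K\"{a}hler, so its Levi-Civita connection $\nabla^{U}$ satisfies $\nabla^{U}J=0$; by the conformal-change formula applied with $\varphi = f_{U}/2$, this $\nabla^{U}$ coincides exactly with the globally defined $D$ restricted to $U$, as in \ref{localweyl}. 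Hence $DJ=0$ on each chart, and therefore on all of $M$; together with the $J$-invariance of $g$ this exhibits $(M,[g],D,J)$ as a Hermitian-Weyl manifold.

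The main obstacle is the injectivity of wedging with $\Omega$ in the first implication, which is precisely where the bound $\dim_{\mathbb{R}}(M)\geq 6$ is indispensable: for $n=2$ the map $\bigwedge^{2}T^{\ast}M\to\bigwedge^{4}T^{\ast}M=\bigwedge^{\mathrm{top}}T^{\ast}M$ has the primitive $2$-forms in its kernel, so $d\theta_{g}\wedge\Omega=0$ no longer forces $d\theta_{g}=0$, and a Hermitian-Weyl $4$-manifold need not be l.c.K. All the remaining steps are essentially formal manipulations with the defining identities, so I would concentrate the care on establishing this injectivity via the primitive (Lefschetz) decomposition of forms, and on checking in the converse that the locally defined K\"{a}hler Levi-Civita connections glue to the single Weyl connection $D$.
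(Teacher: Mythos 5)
Your proposal is correct and follows essentially the same route as the paper: the forward direction via $0 = d(d\Omega) = d\theta_{g}\wedge\Omega$ together with the injectivity of wedging with $\Omega$ on $2$-forms when $\dim_{\mathbb{R}}(M)\geq 6$ (Lefschetz), then local exactness $\theta_{g}|_{U}=df_{U}$ and $d(\mathrm{e}^{-f_{U}}\Omega|_{U})=0$; and the converse by defining $D$ as in \ref{WeylLevi} and identifying it on each chart with the Levi-Civita connection of the local K\"{a}hler metric as in \ref{localweyl}. The only detail worth making explicit, which the paper isolates in a remark, is that torsion-freeness of $D$ together with $DJ=0$ forces $J$ to be integrable, so that the locally conformal metrics $g_{U}$ with closed fundamental form are genuinely K\"{a}hler.
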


For a compact Hermitian-Weyl manifold $(M,[g],D,J)$ Gauduchon showed that, up to homothety, there is a unique choice of metric $g_{0}$ in the conformal class such that the corresponding $1$-form $\theta_{g_{0}}$ is co-closed, e.g. \cite{Gauduchon}. 

\begin{definition}
The unique (up to homothety) l.c.K. metric in the conformal class $[g]$ of $(M,[g],D,J)$ with harmonic associated Lee form is called the Gauduchon metric.
\end{definition}

It follows from Proposition \ref{HweylVaisman} that any compact Vaisman manifold admits a Hermitian-Weyl structure uniquely determined by the Gauduchon metric.

\subsection{Hermitian-Einstein-Weyl manifolds} The Einstein-Weyl equations are a conformally invariant generalization of the Einstein equations. Given a Weyl manifold $(M,[g],D)$, if one considers the curvature tensor $R^{D}$ of the affine connection $D$, one can define 
\begin{equation}
{\text{Ric}}^{D}(X,Y) = {\rm{Tr}} \big \{ Z \mapsto R^{D}(X,Z)Y \big \}.
\end{equation}
Since $D$ is not a metric connection, it follows that its Ricci curvature is not necessarily symmetric, thus we say that $(M,[g],D)$ is Einstein-Weyl if 
\begin{equation}
\label{EinsteinWeyl}
{\rm{Sym}}({\text{Ric}}^{D}) = \lambda g, 
\end{equation}
for some representative $g$, where $\lambda$ is a smooth function, and
\begin{equation}
{\rm{Sym}}({\text{Ric}}^{D})(X,Y) = \frac{1}{2} \big ( {\text{Ric}}^{D}(X,Y) + {\text{Ric}}^{D}(Y,X) \big),
\end{equation}
is the symmetric part of ${\text{Ric}}^{D}$. It is worthwhile to observe that, unlike the Einstein equations in manifolds with dimension at least three \cite{Besse}, the function factor $\lambda$ in Eq. (\ref{EinsteinWeyl}) is not constant.

\begin{remark}
\label{einsteinweyl}
The relation between the curvatures associated to $D$ and $\nabla$ (see Eq. (\ref{WeylLevi})) can be described as follows. At first, we consider the scalar curvatures of $D$ with respect to $g$, i.e.
\begin{equation}
\label{Scalweyl}
{\text{Scal}}_{g}^{D} = {\rm{Tr}}_{g}({\text{Ric}}^{D}).
\end{equation}
We can verify that the scalar curvature ${\text{Scal}}_{g} = {\rm{Tr}}_{g}({\text{Ric}}^{\nabla})$ and the Ricci curvature ${\text{Ric}}^{\nabla}$ associated to the Levi-Civita connection $\nabla$ satisfy
\begin{equation}
\label{Scals}
{\text{Scal}}_{g}^{D} = {\text{Scal}}_{g} + 2(n-1)\delta^{g}(\theta_{g}) - (n-1)(n-2)||\theta_{g}||^{2},
\end{equation}
\begin{equation}
\label{ricciweyl}
{\text{Ric}}^{D} = {\text{Ric}}^{\nabla} + \delta^{g}(\theta_{g}) \cdot g - (n-2) \Big ( \nabla^{g}(\theta_{g}) + ||\theta_{g}||^{2}g - \theta_{g} \otimes \theta_{g} \Big ).
\end{equation}
Moreover, for an Einstein-Weyl structure one has 
\begin{equation}
\label{einsteinweyleq}
{\text{Ric}}^{D} = \frac{{\text{Scal}}_{g}^{D}}{n}g - \frac{n-2}{2}d\theta_{g}.
\end{equation}
Thus, Eq. (\ref{EinsteinWeyl}) can be written as ${\rm{Sym}}({\text{Ric}}^{D})  = \frac{{\text{Scal}}_{g}^{D}}{n}g$, see for instance \cite{Gauduchon2}. It is worth observing that, from Eq. (\ref{einsteinweyleq}), it follows that ${\text{Ric}}^{D}$ is symmetric if and only if $D$ is a closed Weyl structure, i.e. $d\theta_{g} = 0$.
\end{remark}

Given a compact Hermitian-Weyl manifold $(M,[g],D,J)$ of (real) dimension at least $6$, by fixing the Gauduchon gauge $g_{0} \in [g]$, it follows that $\theta_{g_{0}}$ is co-closed, see for instance \cite{Besse}. Thus, it follows that $A = \theta_{g_{0}}^{\sharp}$ is a Killing vector field, cf. \cite{Tod}. As we have seen, from Proposition \ref{HweylVaisman} it follows that $(M,g_{0},J)$ is a l.c.K. manifold, which implies that $d\theta_{g_{0}} = 0$. From this, under the assumption that the associated Weyl structure is also Einstein-Weyl, it follows that ${\text{Scal}}_{g_{0}}^{D} = 0$ and ${\text{Ric}}^{D} \equiv 0$. In fact, since $\theta_{g_{0}}$ is parallel, it follows that $||\theta_{g_{0}}||$ is constant, so from {\it{Bochner's}} formula we obtain 
\begin{equation}
0 = -\frac{1}{2}\triangle ||\theta_{g_{0}}||^{2} = -g_{0}(\triangle \theta_{g_{0}},\theta_{g_{0}}) + ||\nabla \theta_{g_{0}}||^{2} + {\text{Ric}}^{\nabla}(A,A),
\end{equation}
which implies from Eq. (\ref{ricciweyl}), and Eq. (\ref{einsteinweyleq}), that
\begin{equation}
0 = {\text{Ric}}^{\nabla}(A,A) = {\text{Ric}}^{D}(A,A) = \frac{{\text{Scal}}_{g}^{D}}{n}||A||^{2},
\end{equation}
Hence, from Eq. (\ref{ricciweyl}), and Eq. (\ref{einsteinweyleq}), the Einstein-Weyl condition turns out to be 
\begin{equation}
\label{Ric}
{\text{Ric}}^{\nabla} = (n-2) \Big (||\theta_{g_{0}}||^{2}g_{0} - \theta_{g_{0}} \otimes \theta_{g_{0}}\Big ).
\end{equation}
In a more general setting we have the following theorem.
\begin{theorem}[\cite{Gauduchon2}]
\label{EinsteinWeylGauduchon}
Let $D$ be an Einstein-Weyl structure on a compact, oriented conformal manifold $(M,[g])$ of dimension $>2$. Let $g_{0} \in [g]$ be the Gauduchon metric associated to $D$ and $\theta_{g_{0}}$ the corresponding Higgs field. If $\theta_{g_{0}}$ is closed, but not exact, then
\begin{enumerate}
\item $\theta_{g_{0}}$ is $\nabla^{g_{0}}$-parallel, i.e. $\nabla^{g_{0}}\theta_{g_{0}} \equiv 0$ (in particular, $\theta_{g_{0}}$ is also $g_{0}$-harmonic).
\item ${\text{Ric}}^{D} \equiv 0$.
\end{enumerate}
\end{theorem}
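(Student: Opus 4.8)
The plan is to reduce both assertions to the single statement that the Weyl scalar curvature ${\text{Scal}}_{g_0}^{D}$ vanishes, and then to extract this vanishing from a Bochner identity combined with Gauduchon's constancy result and the non-exactness hypothesis. First I would record the two immediate consequences of the hypotheses. By the defining property of the Gauduchon gauge (Remark \ref{gauduchongauge}) the Higgs field $\theta_{g_0}$ is co-closed, $\delta^{g_0}\theta_{g_0}=0$; together with the hypothesis $d\theta_{g_0}=0$ this already makes $\theta_{g_0}$ a $g_0$-harmonic $1$-form, which is the parenthetical part of (1). Feeding $d\theta_{g_0}=0$ into the Einstein-Weyl identity \ref{einsteinweyleq} gives ${\text{Ric}}^{D}=\tfrac{{\text{Scal}}_{g_0}^{D}}{n}g_0$, so that ${\text{Ric}}^{D}$ is symmetric and a function-multiple of $g_0$. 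In particular, once I establish ${\text{Scal}}_{g_0}^{D}\equiv 0$, assertion (2) is immediate.

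The core step is a Bochner computation. Solving \ref{ricciweyl} for ${\text{Ric}}^{\nabla}$, using $\delta^{g_0}\theta_{g_0}=0$ together with ${\text{Ric}}^{D}=\tfrac{{\text{Scal}}_{g_0}^{D}}{n}g_0$, and then evaluating on $A=\theta_{g_0}^{\sharp}$ (so that $\theta_{g_0}(A)=||\theta_{g_0}||^{2}$ makes the $||\theta_{g_0}||^{2}g_0-\theta_{g_0}\otimes\theta_{g_0}$ contribution cancel), I would obtain
\[
{\text{Ric}}^{\nabla}(A,A)=\frac{{\text{Scal}}_{g_0}^{D}}{n}||\theta_{g_0}||^{2}+\frac{n-2}{2}\,A\big(||\theta_{g_0}||^{2}\big).
\]
Since $M$ is compact and $\int_{M}A(||\theta_{g_0}||^{2})\,dV_{g_0}=\int_{M}||\theta_{g_0}||^{2}\,\delta^{g_0}\theta_{g_0}\,dV_{g_0}=0$, integrating the Weitzenböck formula for the harmonic form $\theta_{g_0}$ (exactly the identity used in the computation preceding the theorem) yields
\[
\int_{M}||\nabla^{g_0}\theta_{g_0}||^{2}\,dV_{g_0}=-\int_{M}{\text{Ric}}^{\nabla}(A,A)\,dV_{g_0}=-\frac{1}{n}\int_{M}{\text{Scal}}_{g_0}^{D}\,||\theta_{g_0}||^{2}\,dV_{g_0}.
\]

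The hard part is controlling ${\text{Scal}}_{g_0}^{D}$. Here I would invoke Gauduchon's result \cite{Gauduchon2} that on a compact Einstein-Weyl manifold the Weyl scalar curvature of the Gauduchon gauge is constant; the displayed identity then forces ${\text{Scal}}_{g_0}^{D}\le 0$, since its left-hand side is non-negative while $||\theta_{g_0}||^{2}>0$ somewhere ($\theta_{g_0}$ is not exact, hence not identically zero). The genuinely delicate point, and the main obstacle, is to exclude the strictly negative case: a negative constant Weyl scalar curvature with $d\theta_{g_0}=0$ corresponds to a globally conformally Einstein structure of negative scalar curvature, whose Higgs field is exact, contradicting the hypothesis that $\theta_{g_0}$ is not exact. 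This rigidity is precisely the content I would cite from \cite{Gauduchon2}. Granting it, ${\text{Scal}}_{g_0}^{D}\equiv 0$; the integral identity then gives $\nabla^{g_0}\theta_{g_0}\equiv 0$, which is (1), and ${\text{Ric}}^{D}=\tfrac{{\text{Scal}}_{g_0}^{D}}{n}g_0=0$, which is (2).
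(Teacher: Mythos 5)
Your reduction to the vanishing of ${\text{Scal}}_{g_{0}}^{D}$ is sound up to its final step: the co-closedness from the Gauduchon gauge, the identity ${\text{Ric}}^{D}=\tfrac{{\text{Scal}}_{g_{0}}^{D}}{n}g_{0}$ obtained from Equation \ref{einsteinweyleq} with $d\theta_{g_{0}}=0$, the evaluation ${\text{Ric}}^{\nabla}(A,A)=\tfrac{{\text{Scal}}_{g_{0}}^{D}}{n}\|\theta_{g_{0}}\|^{2}+\tfrac{n-2}{2}A\big(\|\theta_{g_{0}}\|^{2}\big)$, and the integrated Bochner identity are all correct, and, granting constancy of ${\text{Scal}}_{g_{0}}^{D}$, they do force ${\text{Scal}}_{g_{0}}^{D}\le 0$. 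The genuine gap is the exclusion of the strictly negative case, and your justification for it is circular: a closed but non-exact Higgs field produces only a \emph{locally} conformally Einstein structure, and global conformal Einstein-ness is \emph{equivalent} to exactness of $\theta_{g_{0}}$ --- so asserting that negative constant Weyl scalar curvature ``corresponds to a globally conformally Einstein structure whose Higgs field is exact'' assumes exactly what has to be proved. The correct mechanism is a scale-rigidity argument on the universal cover $\wp\colon\widetilde{M}\to M$: write $\wp^{\ast}\theta_{g_{0}}=df$ and $\hat{g}={\rm e}^{-f}\wp^{\ast}g_{0}$, so that $\hat{g}$ is Einstein with Levi-Civita connection the lift of $D$; deck transformations preserve $D$ and the conformal class, hence act by homotheties $\gamma^{\ast}\hat{g}=\lambda_{\gamma}^{2}\hat{g}$, and since ${\text{Ric}}(\lambda^{2}\hat{g})={\text{Ric}}(\hat{g})$, a nonzero Einstein constant forces $\lambda_{\gamma}=1$ for every $\gamma$, i.e.\ the periods of $\theta_{g_{0}}$ vanish and $\theta_{g_{0}}$ is exact. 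This is the step your citation has to carry, and it must be made explicit. Note also that the constancy of ${\text{Scal}}_{g_{0}}^{D}$ in the Gauduchon gauge is itself a nontrivial theorem of \cite{Gauduchon2}, established in the same circle of ideas as the Killing property of $A$, so your route does not actually economize on the hard input.

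For comparison, the argument the paper runs (in the computation immediately preceding the theorem, in the Hermitian-Weyl case; the theorem itself is quoted from \cite{Gauduchon2}) proceeds in the opposite order: it first invokes the Gauduchon--Tod theorem that $A=\theta_{g_{0}}^{\sharp}$ is Killing in the Gauduchon gauge, deduces parallelism at once from the Koszul identity $2g_{0}(\nabla_{X}A,Y)=d\theta_{g_{0}}(X,Y)+(\mathscr{L}_{A}g_{0})(X,Y)$, and only then applies Bochner pointwise: $\|\theta_{g_{0}}\|$ is a nonzero constant, $0={\text{Ric}}^{\nabla}(A,A)={\text{Ric}}^{D}(A,A)=\tfrac{{\text{Scal}}_{g_{0}}^{D}}{n}\|A\|^{2}$ (the first equality using Equation \ref{ricciweyl} with $\delta^{g_{0}}\theta_{g_{0}}=0$, $\nabla\theta_{g_{0}}=0$ and the cancellation $\|\theta_{g_{0}}\|^{2}g_{0}(A,A)-\theta_{g_{0}}(A)^{2}=0$), whence ${\text{Scal}}_{g_{0}}^{D}\equiv 0$ and ${\text{Ric}}^{D}\equiv 0$ by Equation \ref{einsteinweyleq}. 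That route needs neither the constancy of the Weyl scalar curvature nor any rigidity excluding the negative sign. In short: your first half is a correct variant of the Bochner step, but the proposal is incomplete at its decisive point; either supply the homothety argument sketched above or switch to the Killing-field route.
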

From the above discussion the next result provides a unified setting which takes into account the aspects of the structure of principal elliptic fibration which underlies compact Hermitian-Einstein-Weyl manifolds of real dimension $\geq 6$. More precisely, we have the following result.

\begin{theorem}[\cite{PedersenPoonSwann}] 
\label{HEW}
Each compact Hermitian-Einstein-Weyl manifold $(M,[g],D,J)$ ($\dim_{\mathbbm{R}}(M) \geq 6$) which is strongly regular Vaisman manifold, arises as a fibration over a compact K\"{a}hler-Einstein manifold $X$ ($\dim_{\mathbbm{R}}(X) = \dim_{\mathbbm{R}}(M) - 2$) of positive scalar curvature. Moreover, $M$ is obtained as a discrete quotient of the Ricci-flat K\"{a}hler structure on a $\mathbbm{C}^{\times}$-principal bundle associated to a maximal root of the canonical bundle of $X$.
\end{theorem}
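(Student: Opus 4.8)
The plan is to combine the analytic rigidity already assembled in Theorem~\ref{EinsteinWeylGauduchon} and Remark~\ref{universalsplitting} with the structural description of strongly regular Vaisman manifolds in Theorem~\ref{vaismanclass} and the elliptic-bundle machinery of Section~\ref{negativeelliptic}, and then to read off the complex geometry of the base. First I would fix the Gauduchon gauge $g_{0}\in[g]$. Since $\dim_{\mathbb{R}}(M)\geq 6$, Proposition~\ref{HweylVaisman} makes $(M,g_{0},J)$ l.c.K.\ and Equation~\ref{closedhiggs} makes $\theta_{g_{0}}$ closed; it is not exact because $M$ is Vaisman (not K\"{a}hler). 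Theorem~\ref{EinsteinWeylGauduchon} then yields $\nabla^{g_{0}}\theta_{g_{0}}\equiv 0$ and ${\text{Ric}}^{D}\equiv 0$, so after normalizing $||\theta_{g_{0}}||=1$ the Einstein-Weyl condition collapses to the pointwise identity in Equation~\ref{Ric}. In particular, restricting Equation~\ref{Ric} to the horizontal distribution $\langle A\rangle^{\perp}$ gives ${\text{Ric}}^{\nabla}(X)=(n-2)X$ for $X\perp A$ (Equation~\ref{tangentEW}): the characteristic foliation $\mathcal{F}$ is transversally Einstein with positive transverse Einstein constant.

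Next I would invoke strong regularity. By Theorem~\ref{vaismanclass} the projection $\pi\colon M\to X:=M/\mathcal{F}$ is a principal $T_{\mathbb{C}}^{1}$-bundle over a compact Hodge manifold $X$ with $\dim_{\mathbb{R}}(X)=\dim_{\mathbb{R}}(M)-2$, and $Q=M/\mathcal{F}_{A}$ is a compact Sasaki manifold fibering over $X$; via the correspondence of Section~\ref{negativeelliptic} this $Q$ is the sphere bundle $Q(L)$ of a negative line bundle $L\to X$, with K\"{a}hler cone $\mathscr{C}(Q)\cong{\text{Tot}}(L^{\times})$. By Theorem~\ref{blikekahler} the Vaisman metric is transversally K\"{a}hler along $\mathcal{F}$, so the transverse Einstein condition from the previous paragraph descends to a genuine K\"{a}hler-Einstein metric $\omega$ on $X$ with ${\text{Ric}}(\omega)=c\,\omega$, $c>0$; hence $X$ is K\"{a}hler-Einstein of positive scalar curvature, i.e.\ Fano. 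Equivalently, Remark~\ref{universalsplitting} shows the universal cover splits as $\widetilde{Q}\times\mathbb{R}$ with $\widetilde{Q}$ a simply connected compact Einstein manifold of positive scalar curvature; being the link of the K\"{a}hler cone $\widetilde{M}=\mathscr{C}(\widetilde{Q})$, it is Sasaki-Einstein, so the cone metric $\omega_{\mathscr{C}}$ on ${\text{Tot}}(L^{\times})$ is Ricci-flat.

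To finish, I would identify $L$ with the maximal root of the canonical bundle of $X$. The relative Euler sequence gives $K_{{\text{Tot}}(L)}\cong p^{\ast}(K_{X}\otimes L^{-1})$, and on ${\text{Tot}}(L^{\times})$ the pullback $p^{\ast}L$ is tautologically trivialized by the fiber coordinate; Ricci-flatness of the cone forces $K_{{\text{Tot}}(L^{\times})}$ to be holomorphically trivial, which pins $c_{1}(L)$ to be proportional to $c_{1}(K_{X})$, and the Sasaki-Einstein normalization of the Reeb period selects the \emph{primitive} root $L=K_{X}^{\otimes \frac{1}{I(X)}}=\mathscr{O}_{X}(-1)$, where $I(X)$ is the Fano index. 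Finally, the minimal presentation (Theorem~\ref{minimalpresentation}) together with the explicit elliptic-bundle construction of Section~\ref{negativeelliptic} writes $M={\text{Tot}}(L^{\times})/\Gamma_{\lambda}$ as a discrete quotient of the Ricci-flat K\"{a}hler structure on the principal $\mathbb{C}^{\times}$-bundle ${\text{Tot}}(L^{\times})$ attached to this maximal root, which is exactly the claim.

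The hard part is the curvature bookkeeping and the root selection in the last two steps. Turning the pointwise relation Equation~\ref{Ric} into the statement that $\omega$ is K\"{a}hler-Einstein on $X$ requires matching ${\text{Ric}}^{\nabla}$ on $M$, the transverse Ricci of $\mathcal{F}$, and the Ricci form downstairs through the submersion formulae underlying Theorem~\ref{blikekahler}; and distinguishing the \emph{maximal} root $K_{X}^{\otimes \frac{1}{I(X)}}$ from an arbitrary root $K_{X}^{\otimes \frac{1}{k}}$ is genuinely delicate, since both carry Calabi-Yau cone complex structures and only the correct Reeb period makes the transverse K\"{a}hler-Einstein class integral and primitive. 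This normalization step is where I expect the real work to lie.
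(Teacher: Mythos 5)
Your overall route coincides with the paper's: Gauduchon gauge, Theorem \ref{EinsteinWeylGauduchon}, the Cheeger--Gromoll splitting of Remark \ref{universalsplitting} giving $\widetilde{M}=\widetilde{Q}\times\mathbb{R}$ with $\widetilde{Q}$ a simply connected positive Einstein hypersurface, the identification of $\widetilde{Q}$ as Sasaki--Einstein with Ricci-flat K\"{a}hler cone, and the Boothby--Wang fibration onto a K\"{a}hler--Einstein Fano base --- this is exactly how the paper argues, and your direct descent of the transverse Einstein condition through Theorem \ref{blikekahler} is an acceptable variant of the paper's computation ${\text{Ric}}^{\widetilde{\nabla}}=2(m-1)g_{\widetilde{Q}}$ on $\widetilde{Q}=f^{-1}(0)$. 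The genuine gap sits in the final step, precisely where you placed your doubt. Holomorphic triviality of the canonical bundle of the cone only pins $c_{1}(L)$ to a rational multiple of $c_{1}(K_{X})$, i.e. $L=K_{X}^{\otimes \ell/I(X)}$ for some $\ell\geq 1$; it cannot select $\ell=1$, and neither can any ``Reeb period normalization'': for \emph{every} $\ell$ the unit bundle $Q(K_{X}^{\otimes \ell/I(X)})$ carries a regular Sasaki--Einstein structure whose cone is Ricci-flat K\"{a}hler --- it is the free $\mathbb{Z}_{\ell}$-quotient of the cone over the primitive root, so the local metric data of all roots are literally isometric, the transverse K\"{a}hler--Einstein class is integral in every case, and no curvature or period normalization distinguishes them. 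The discriminating fact is purely topological, and you already had it in hand: the splitting theorem produces a \emph{simply connected} $\widetilde{Q}$, and by the Gysin sequence over the simply connected Fano base (equivalently \cite{Kobayashi1}, \cite[Corollary 2.1]{Blair}) the circle bundle whose Euler class is divisible by $\ell$ has $\pi_{1}\cong\mathbb{Z}_{\ell}$; hence $\ell=1$. This is how the paper concludes, after first invoking \cite[Theorem 4.1]{Tsukada} to write $\widetilde{M}=L^{\times}$ with $L$ some root of $K_{X}$.

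A further caution about your closing sentence: the identification $M={\rm{Tot}}(L^{\times})/\Gamma_{\lambda}$ with $L$ itself the maximal root is not always available, because $Q=M/\mathcal{F}_{A}$ need not be simply connected. When $\pi_{1}(Q)=\mathbb{Z}_{\ell}$ one has $Q=Q(\mathscr{O}_{X}(-\ell))$ and $M={\rm{Tot}}(\mathscr{O}_{X}(-\ell)^{\times})/\Gamma$, which is a discrete quotient of ${\rm{Tot}}(\mathscr{O}_{X}(-1)^{\times})$ by an extension of $\Gamma$ by $\mathbb{Z}_{\ell}$, not by a cyclic subgroup of $\mathbb{C}^{\times}$ alone. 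The theorem's phrase ``discrete quotient'' is deliberately weaker than your cyclic-quotient formulation; this is exactly the dichotomy $\ell={\text{ord}}(\pi_{1}(Q))$ handled later in the proof of Theorem \ref{Theo2}.
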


\begin{remark}
\label{canonicalroot}
In analogy to the tautogical line bundle over the projective space, given a compact connected K\"{a}hler-Einstein manifold $X$ of positive scalar curvature (Fano manifold), we shall denote the maximal root of its canonical bundle by
\begin{equation}
\mathscr{O}_{X}(-1) := K_{X}^{\otimes \frac{1}{I(X)}},
\end{equation}
here $I(X) \in \mathbbm{Z}$ denotes the Fano index of $X$. We also shall denote $\mathscr{O}_{X}(-\ell) := \mathscr{O}_{X}(-1)^{\otimes \ell}$,  for every $\ell \in \mathbbm{Z}$.
\end{remark}

\section{Compact homogeneous Vaisman manifolds}

In this section, we provide a description of compact homogeneous l.c.K. manifolds and explain their relation with complex flag manifolds. 

\subsection{Principal elliptic bundles over flag manifolds} 
\label{compacthomogeneoussetting}
Before we move on to the homogenous setting, let us recall some basic facts which summarize some ideas developed in the previous sections. As we have seen, given a compact Vaisman manifold $(M,J,\theta)$ with strongly regular canonical foliation $\mathcal{F} =  \mathcal{F}_{A} \oplus \mathcal{F}_{B}$, we obtain the following principal bundles:
\begin{center}
$ S^{1} \hookrightarrow (M,J,\theta) \to M/\mathcal{F}_{A} = Q$ \ \ and \ \ $T_{\mathbbm{C}}^{1} \hookrightarrow (M,J,\theta) \to M/\mathcal{F} = X$,
\end{center}
where the first fibration on the left-hand side above is a flat $S^{1}$-principal bundle over a Sasaki manifold $Q$, and the second fibration on the right-hand side above is a $T_{\mathbbm{C}}^{1}$-principal bundle over a compact Hodge manifold $X$. Also, we have a $S^{1}$-principal bundle defined by $S^{1} \hookrightarrow Q \to X$, where $Q$ is a compact Sasaki manifold. Under the assumption that $X$ is simply connected, we have that the Chern class of $M$ as a $T_{\mathbbm{C}}^{1}$-principal bundle over $X$ coincides with the Euler class of $Q$ as a $S^{1}$-principal bundle over $X$, see for instance \cite{Chen}. Thus, in this last case, we can realize $(M,J,\theta)$ as a quotient space of the form
\begin{equation}
\label{withouttorsion}
M = {\text{Tot}}(L^{\times})/\Gamma,
\end{equation}
where $L \in \text{Pic}(X)$ is a negative line bundle, such that $L = Q \times_{{\rm{U}}(1)} \mathbbm{C}$, and $\Gamma = \Gamma_{\lambda} \subset \mathbbm{C}^{\times}$ is a cyclic group obtained from $\lambda \in \mathbbm{C}^{\times}$, such that $|\lambda| < 1$, see for instance Subsection \ref{negativeelliptic} and \cite[Theorem 4.2]{Tsukada}.

In what follows we shall explore the basic ideas described above in the setting of homogeneous spaces. Our exposition is essentially based on \cite{Gauduchon1}, for more details about the background in Lie theory, we suggest \cite{Knapp}, \cite{Humphreys}, see also \cite{PARABOLICTHEORY}, \cite{Akhiezer}.

\begin{definition}
Let $(M,J,g)$ be a compact l.c.K. manifold, we say that $(M,J,g)$ is a homogeneous l.c.K. manifold if it admits an effective and transitive smooth (left) action of a compact connected Lie group $K$, which preserves the metric $g$ and the complex structure $J$.
\end{definition}

\begin{definition}
A compact Hermitian-Weyl manifold $(M,[g],D,J)$ is said to be homogeneous if it admits an effective and transitive smooth (left) action of a compact connected Lie group $K$, which preserves the metric $g$ and the complex structure $J$.
\end{definition}

\begin{theorem}[\cite{Gauduchon1}]
\label{LCKisVaisman}
Any compact homogeneous l.c.K. manifold $(M,g,J)$ is Vaisman.
\end{theorem}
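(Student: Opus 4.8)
The goal is to establish the Vaisman condition $\nabla\theta \equiv 0$, where $\theta$ is the (closed) Lee form and $\nabla$ is the Levi-Civita connection of $g$. The plan is to reduce everything to a single assertion: that the Lee vector field $A = \theta^{\sharp}$ is Killing. Indeed, Koszul's identity gives
\begin{equation}
2g(\nabla_{X}A,Y) = d\theta(X,Y) + (\mathscr{L}_{A}g)(X,Y)
\end{equation}
for all $X,Y \in \Gamma(TM)$, and since $\theta$ is closed the first term vanishes. Hence $\nabla\theta \equiv 0$ would follow at once from $\mathscr{L}_{A}g = 0$, and the whole problem collapses to proving that $A$ is an infinitesimal isometry.

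First I would use homogeneity to normalize the Lee form. Because $K$ is compact and connected and preserves both $g$ and $J$, it preserves the fundamental form $\Omega = g(J\otimes\mathrm{id})$; averaging $\theta$ against the normalized Haar measure on $K$ and using $d\theta = 0$ lets me assume that $\theta$ is $K$-invariant, so $\mathscr{L}_{\delta\tau(X)}\theta = 0$ for every $X \in \mathfrak{k}$. Evaluating $d\theta$ on pairs of fundamental vector fields then gives
\begin{equation}
\theta\big(\big[\delta\tau(X),\delta\tau(Y)\big]\big) = 0, \qquad \forall X,Y \in \mathfrak{k},
\end{equation}
so the semisimple directions $\delta\tau([\mathfrak{k},\mathfrak{k}])$ lie in $\ker\theta$. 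Together with the reductive splitting $\mathfrak{k} = \mathcal{Z}(\mathfrak{k}) \oplus [\mathfrak{k},\mathfrak{k}]$ this lets me choose a central element $T \in \mathcal{Z}(\mathfrak{k})$ normalized so that $\theta(\delta\tau(T)) = 1$, singling out the invariant direction transverse to $\ker\theta$.

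The crux, and the step I expect to be the main obstacle, is the analysis of the Lie algebra $\mathrm{Inv}$ of $K$-invariant vector fields. Here I would invoke the structural identification $\mathrm{Inv} = N_{\mathfrak{k}}(\mathfrak{k}_{p})/\mathfrak{k}_{p}$ together with the rank computation $\mathrm{rank}(\mathrm{Inv}) = 2$, which forces $\mathrm{Inv}$ to be isomorphic either to $\mathbb{R}\oplus\mathbb{R}$ or to $\mathfrak{u}(2) = \mathbb{R}\oplus\mathfrak{su}(2)$. Pinning down this rank-two structure is the genuinely nontrivial Lie-theoretic input, for it is exactly what couples the central generator $\delta\tau(T)$ to the Lee data through the complex structure: in either case one extracts a relation
\begin{equation}
J\delta\tau(T) = a\,\delta\tau(T) + bJA, \qquad a,b \in \mathbb{R}, \ b \neq 0.
\end{equation}
Everything after this relation is formal.

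Finally I would close the argument. Substituting the displayed relation into the vanishing Nijenhuis tensor $N_{J}(X,Y) = J(\mathscr{L}_{X}J)Y - (\mathscr{L}_{JX}J)Y = 0$ yields $\mathscr{L}_{A}J = 0$, so $A$ preserves the complex structure. A one-line Cartan computation, using $d\Omega = \theta\wedge\Omega$, $d\theta = 0$, and $\theta(JA) = 0$,
\begin{equation}
\mathscr{L}_{JA}\Omega = \iota_{JA}d\Omega + d\iota_{JA}\Omega = -d\theta + \theta(JA)\,\Omega + \theta\wedge\theta = 0,
\end{equation}
combined with the invariance $\mathscr{L}_{\delta\tau(T)}\Omega = 0$ and the displayed relation, then forces $\mathscr{L}_{A}\Omega = 0$. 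Since $g(X,Y) = \Omega(X,JY)$, the two identities $\mathscr{L}_{A}J = 0$ and $\mathscr{L}_{A}\Omega = 0$ give $\mathscr{L}_{A}g = 0$, so $A$ is Killing; feeding this back into Koszul's identity produces $\nabla\theta \equiv 0$, which is precisely the Vaisman condition.
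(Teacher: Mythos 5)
Your proposal is correct and follows essentially the same route as the paper: $K$-averaging of $\theta$, the reductive splitting forcing a central $T$ with $\theta(\delta\tau(T)) = 1$, the rank-two structure of ${\text{Inv}} = N_{\mathfrak{k}}(\mathfrak{k}_{p})/\mathfrak{k}_{p}$ from \cite{Gauduchon1} yielding $J\delta\tau(T) = a\,\delta\tau(T) + bJA$ with $b \neq 0$, and then the Nijenhuis and Cartan computations showing $A$ is Killing, closed by Koszul's identity. You correctly identify the rank computation as the one genuinely nontrivial input, which the paper likewise takes from \cite{Gauduchon1} rather than reproving.
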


From Theorem \ref{LCKisVaisman}, in the setting of homogeneous l.c.K. manifolds we have that the Harmonic representative and the $K$-invariant representative\footnote{Observe that in the setting of Theorem \ref{LCKisVaisman}, it follows that $H^{1}(M,\mathbbm{R}) \cong H_{K}^{1}(M,\mathbbm{R})$, where $H_{K}^{\bullet}(M,\mathbbm{R})$ denotes the invariant cohomology group of $(M,g,J)$ as a left $K$-manifold, cf. \cite[Theorem 1.28]{Algmodels}.} in $H^{1}(M,\mathbbm{R})$ for the associated Lee form are the same, i.e. a $K$-invariant l.c.K. metric is the Gauduchon metric in its conformal class. Just like in the case of compact homogeneous contact manifolds \cite{BW}, we have the following result related to the regularity of the canonical foliation of compact homogeneous Vaisman manifolds.

\begin{theorem}[Vaisman \cite{VaismanII}, \cite{Vaisman}] 
\label{Vaismanregular}
Any compact homogeneous Vaisman manifold $(M,g,J)$ is strongly regular. Moreover, the following facts hold:
\begin{enumerate}

\item Associated to $M$ we have two principal fiber bundles given by
\begin{center}
$ S^{1} \hookrightarrow (M,g,J) \to M/\mathcal{F}_{A} = Q$ \ \ and \ \ $T_{\mathbbm{C}}^{1} \hookrightarrow (M,g,J) \to M/\mathcal{F} = X$,
\end{center}
where $Q$ is a compact homogeneous Sasaki manifold, and $X$ is a compact simply connected homogeneous Hodge manifold.

\item The manifold $Q$ above defines a Boothby-Wang fibration over $X$, i.e. $Q$ is a $S^{1}$-principal bundle over $X$ whose Euler class defines an invariant Hodge metric on $X$.
\end{enumerate}
\end{theorem}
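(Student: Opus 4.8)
The plan is to reduce the statement to two facts: that the characteristic foliation $\mathcal{F}$ and its subfoliation $\mathcal{F}_A$ are regular in the sense of Palais's lemma (compact leaves, smooth manifold leaf space), and that the transitive $K$-action descends to the successive leaf spaces. I would begin by recording that $\mathcal{F} = \mathcal{F}_A \oplus \mathcal{F}_B$ is $K$-invariant: since $g$, $J$, and hence $A = \theta^{\sharp}$ and $B = JA$ are all $K$-invariant (by Equation \ref{equationfoliation} and the discussion preceding Theorem \ref{LCKisVaisman}), the group $K$ permutes the leaves by holomorphic isometries. Consequently all leaves of $\mathcal{F}$ (resp. of $\mathcal{F}_A$) are mutually congruent, so they share a single dimension and are either all compact or all non-compact.

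The heart of the argument is regularity, and here I would exploit the compactness of the center. Because $A$ and $B$ are $K$-invariant, their flows commute with the $K$-action, so they centralize $\delta\tau(\mathfrak{k})$ inside the compact Lie group of holomorphic isometries of $(M,g,J)$; moreover $[A,B]=0$. Using the central element $T \in \mathcal{Z}(\mathfrak{k})$ with $\theta(\delta\tau(T)) = 1$ together with the relation $J\delta\tau(T) = a\,\delta\tau(T) + bJA$, $b \neq 0$ (Equation \ref{Leekilling}), I would show that $A$ and $B$ lie in the Lie algebra of the compact torus generated by $Z(K)_0$ and the closures of their own flows, so that the closed abelian subgroup $\overline{G}$ they generate is in fact a $2$-torus. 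Because $\overline{G}$ commutes with the transitive $K$-action, all of its orbits are congruent, and they coincide with the leaves of $\mathcal{F}$ exactly when $\dim \overline{G} = 2$. I expect this step — pinning down that the subgroup of the isometry group generated by the Lee and anti-Lee flows is a $2$-torus, equivalently that the leaves are closed — to be the main obstacle, precisely because a generic Killing field on a compact manifold may have non-closed orbits; the resolution must use that $A, B$ originate from the compact central torus of $K$ (through the rank-$2$ algebra $\text{Inv}$) rather than from parallelism of $\theta$ alone.

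Granting this, the leaves of $\mathcal{F}$ are compact; being complex $1$-dimensional, totally geodesic and locally flat by Theorem \ref{blikekahler}, each is a compact flat complex parallelizable curve, i.e. an elliptic curve $T^1_{\mathbb{C}}$, while the leaves of $\mathcal{F}_A$ are circles. Thus $\mathcal{F}_A$ and $\mathcal{F}$ are regular, so $M$ is strongly regular in the sense of Definition \ref{regvaisman}, and by Palais's lemma $Q = M/\mathcal{F}_A$ and $X = M/\mathcal{F}$ are compact manifolds with smooth projections. The residual $K$-action descends along each projection (since $K$ commutes with the fibre-generating flows) and stays transitive, so $Q$ and $X$ are compact homogeneous. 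Theorem \ref{flatvaisman} then identifies $Q$ with the compact Sasaki manifold $M/\mathcal{F}_A$ and realizes $M \to Q$ as a flat principal $S^1$-bundle, while Theorem \ref{vaismanclass} realizes $\pi \colon M \to X$ as a principal $T^1_{\mathbb{C}}$-bundle over the compact Hodge manifold $X$, with the transverse Kähler form $-d\vartheta$ descending to the invariant Hodge metric on $X$.

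Finally, for the last assertions I would argue as follows. The central directions of $\mathfrak{k}$ are absorbed into the $T^1_{\mathbb{C}}$-fibre, so the central torus $Z(K)_0$ acts trivially on $X$ and the residual transitive action on $X$ is by the semisimple part $K_{\text{ss}}$; a compact homogeneous Kähler manifold under a compact semisimple group is a generalized flag manifold and is therefore simply connected, giving item (1). For item (2), the intermediate projection $Q \to X$ is a principal $S^1$-bundle, and the Sasaki contact form $\eta$ on $Q$ satisfies $d\eta = \pi^{\ast}\omega$ (as in Theorem \ref{almostcircle} and Equation \ref{Sasakimetric}); hence $Q \to X$ is a Boothby-Wang fibration \cite{BW} whose Euler class is represented by the invariant Hodge form $\omega$ on $X$. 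Since $X$ is simply connected, this Euler class agrees integrally with the Chern class of the $T^1_{\mathbb{C}}$-bundle $M \to X$, so no torsion ambiguity remains in Theorem \ref{vaismanclass}, completing the proof.
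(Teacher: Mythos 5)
The first thing to note is that the paper contains no internal proof of Theorem \ref{Vaismanregular}: it is imported from Vaisman's papers \cite{VaismanII}, \cite{Vaisman}, so there is no ``paper approach'' to compare yours against, and your proposal has to stand on its own. Much of it does: the $K$-invariance of $A$ and $B$, the fact that $A,B$ lie in ${\text{Inv}} = N_{\mathfrak{k}}(\mathfrak{k}_{p})/\mathfrak{k}_{p}$, which is the Lie algebra of a \emph{compact} group because $K$ is compact, together with $[A,B]=0$ and $\rank({\text{Inv}})=2$, do show that the closure $\overline{G}$ of the joint flow of $A$ and $B$ is a torus of dimension at most $2$ commuting with the transitive $K$-action; its orbits then contain, and hence coincide with, the $2$-dimensional leaves of $\mathcal{F}$, so the leaves are compact elliptic curves and $\mathcal{F}$ is regular. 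The descent of the transitive action to the leaf spaces, the identification of $X$ with a simply connected flag manifold via the semisimple part of $K$, and the Boothby--Wang statement with $d\eta = \pi^{\ast}\omega$ in item (2) are all in order and consistent with the paper's Section 5.

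The genuine gap is exactly the step you flag as ``the main obstacle'' and then defer: regularity of $\mathcal{F}_{A}$, which is what \emph{strongly} regular adds to regular in Definition \ref{regvaisman}. Your torus argument bounds only the closure of the group generated by the two flows jointly; it says nothing about the closure of the Lee flow alone, which is a subtorus of $\overline{G}\cong T^{2}$ that can perfectly well be all of $T^{2}$, in which case the $A$-orbits wind densely in the elliptic leaves and $\mathcal{F}_{A}$ is irregular. This failure mode is concrete: on the diagonal Hopf surface $\big(\mathbb{C}^{2}\setminus\{0\}\big)/\langle \lambda \rangle$ with $\arg \lambda \notin \pi\mathbb{Q}$, the invariant Vaisman metric $|z|^{-2}\sum_{k} dz_{k}\odot d\overline{z}_{k}$ has Lee field proportional to the radial field $r\partial_{r}$, whose orbits close only if some power of $\lambda$ is a positive real number --- which never happens here --- even though the manifold is homogeneous under the compact closure, inside the isometry group, of the image of ${\rm{U}}(2)\times\mathbb{C}^{\times}$. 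So the mechanism you propose (``$A$, $B$ originate from the compact central torus, hence their orbits close'') cannot by itself deliver strong regularity: closing the $A$-orbits is an arithmetic statement about the alignment of the Lee period with the deck group, equivalently about the order of the Sasaki automorphism $\varphi$ in the presentation of Theorem \ref{minimalpresentation}, and this input --- which is where the cited sources \cite{VaismanII}, \cite{Vaisman} do their real work --- is entirely absent from your sketch. As written, your argument establishes regularity of $\mathcal{F}$, the $T_{\mathbb{C}}^{1}$-bundle $M \to X$, and item (2), but the assertion of strong regularity and the flat $S^{1}$-bundle $M \to Q$ remain unproved; you should either carry out that step following the cited literature or state explicitly that this part of the theorem is being assumed.
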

In the setting of the above theorem, since the base manifold $X$ is simply connected, it follows from Theorem \ref{vaismanclass} that the principal fiber bundles 
\begin{center}
$T_{\mathbbm{C}}^{1} \hookrightarrow M \to X$ \ \ and \ \ $ S^{1} \hookrightarrow Q \to X$,
\end{center}
are defined by the same characteristic class in $H^{2}(X,\mathbbm{Z})$. Therefore, we can realize $M$ as a quotient space
\begin{equation}
M = {\text{Tot}}(L^{\times})/\Gamma,
\end{equation}
where $L$ is a negative line bundle over $X$ which satisfies $Q(L) = Q$ (see Eq. (\ref{spherebundle})) and $\Gamma \subset \mathbbm{C}^{\times}$ is a cyclic group. Another important fact concerned with the setting above is that, since the base Hodge manifold $X$ is simply connected, and its odd Betti numbers vanish \cite{BorelH}, \cite[Lemma 26.1]{Borel}, it can be shown from the Gysin sequence for the fibrations described in Theorem \ref{Vaismanregular} that $b_{1}(M)  = 1$, see for instance \cite[Remark 6.2]{Dragomir}, \cite{Vaisman}.

Given a compact homogeneous l.c.K. manifold $(M,g,J)$, if we consider the decomposition of $\mathfrak{k} = {\text{Lie}}(K)$ given by $\mathfrak{k} = [\mathfrak{k},\mathfrak{k}] \oplus \mathfrak{z}(\mathfrak{k})$, it follows that the compact reductive Lie group $K$ can be written as
\begin{equation}
K = K_{{\text{ss}}}Z(K)_{0},
\end{equation}
such $K_{{\text{ss}}}$ is a closed, connected and semisimple Lie subgroup with ${\text{Lie}}(K_{{\text{ss}}}) = [\mathfrak{k},\mathfrak{k}]$, and $Z(K)_{0}$ is the closed connected subgroup defined by the connected component of the identity of the center of $K$, see for instance \cite{Knapp}. It follows from \cite{Gauduchon1} that the compact simply connected homogeneous Hodge manifold $X$ given in Theorem \ref{Vaismanregular} (see item (2)) can be realized as a quotient space
\begin{equation}
\label{HomogeneousHd}
X = K_{{\text{ss}}}/H,
\end{equation}
where $H$ is compact, connected and equal to the centralizer of a torus of $K_{{\text{ss}}}$, see for instance \cite{BorelK}. Considering the description provided in Eq. (\ref{HomogeneousHd}), if we take the decomposition of $K_{{\text{ss}}}$ into simple Lie groups, i.e.
\begin{equation}
K_{{\text{ss}}} = G_{1} \times \cdots \times G_{r},
\end{equation}
then we have $H = H_{1} \times \cdots \times H_{r}$, such that $H_{j} = H \cap G_{j}$ is a centralizer of a torus in $G_{j}$, for every $j = 1,\ldots, r$, and
\begin{equation}
\label{decompositionflag}
X = \big (G_{1}/H_{1} \big)\times \cdots \times \big (G_{r}/H_{r} \big),
\end{equation}
see for instance \cite{BorelK}. 
If we consider the complexification $\mathfrak{k}^{\mathbbm{C}} = \mathfrak{k} \otimes \mathbbm{C}$, by choosing a {\textit{Cartan subalgebra}} $\mathfrak{h} \subset \mathfrak{k}^{\mathbbm{C}}$, and fixing a root system $\Pi = \Pi^{+} \cup \Pi^{-}$, we obtain a triangular decomposition 
\begin{equation}
\mathfrak{k}^{\mathbbm{C}} = \bigg ( \sum_{\alpha \in \Pi^{+}}\mathfrak{k}_{\alpha}\bigg) \oplus \mathfrak{h} \oplus \bigg ( \sum_{\alpha \in \Pi^{-}}\mathfrak{k}_{\alpha}\bigg).
\end{equation}
Then, if we take the (canonical) associated {\textit{Borel subalgebra}}
\begin{equation}
\mathfrak{b} = \bigg ( \sum_{\alpha \in \Pi^{+}}\mathfrak{k}_{\alpha}\bigg) \oplus \mathfrak{h},
\end{equation}
it can be shown that there exists a parabolic Lie subalgebra $\mathfrak{p} \subset \mathfrak{k}^{\mathbbm{C}}$, which contains $\mathfrak{b}$, such that its normalizer $P = N_{K_{ss}^{\mathbbm{C}}}(\mathfrak{p})$ (parabolic Lie subgroup) satisfies
\begin{equation}
H = P \cap K_{ss}.
\end{equation}
Moreover, it follows from the Iwasawa decomposition of $K_{ss}^{\mathbbm{C}}$ that
\begin{equation}
\label{cartesian}
X = K_{ss}^{\mathbbm{C}}/P.
\end{equation}
From this, we have $K_{ss}^{\mathbbm{C}} = G_{1}^{\mathbbm{C}}\times \cdots \times G_{r}^{\mathbbm{C}}$, and
\begin{equation}
\label{flagdecompgeral}
X = \big (G_{1}^{\mathbbm{C}}/P_{1} \big) \times \cdots \times \big (G_{r}^{\mathbbm{C}}/P_{r} \big ),
\end{equation}
such that $P_{j} = P\cap G_{j}^{\mathbbm{C}}$ is a parabolic Lie subgroup of $G_{j}^{\mathbbm{C}}$, for every $j = 1,\ldots,r$. Further, we also can write
\begin{equation}
\label{parabolicdecomp}
P = P_{1} \times \cdots \times P_{r},
\end{equation}
notice that we have $H_{j} = P_{j} \cap G_{j}$, for every $j = 1,\ldots,r$. In the above setting, if we denote by $\Sigma \subset \mathfrak{h}^{\ast}$ the associated simple root system defined by the pair $(\mathfrak{k}^{\mathbbm{C}},\mathfrak{h})$, it follows that 
\begin{equation}
\mathfrak{h} = \mathfrak{h}_{1} \times \cdots \times \mathfrak{h}_{r} \ \ {\text{and}} \ \ \Sigma = \Sigma_{1}\cup \cdots \cup \Sigma_{r},
\end{equation}
such that $\mathfrak{h}_{j} \subset \mathfrak{g}_{j}^{\mathbbm{C}}$ is a Cartan subalgebra, and $\Sigma_{j} \subset \mathfrak{h}_{j}^{\ast}$, $j = 1,\ldots,r$, is a simple root system. We notice that the parabolic Lie subgroup $P \subset K_{ss}^{\mathbbm{C}}$ is uniquely determined by $\Theta_{P} \subset \Sigma$. Actually, we have $\mathfrak{p} = \mathfrak{p}_{\Theta_{P}}$, such that
\begin{equation}
\mathfrak{p}_{\Theta_{P}} = \bigg ( \sum_{\alpha \in \Pi^{+}}\mathfrak{k}_{\alpha}\bigg) \oplus \mathfrak{h} \oplus \bigg ( \sum_{\alpha \in \langle \Theta_{P} \rangle^{-}}\mathfrak{k}_{\alpha}\bigg),
\end{equation}
where $\langle \Theta_{P} \rangle^{-} = \langle \Theta_{P} \rangle \cap \Pi^{-}$. Furthermore, from Eq. (\ref{parabolicdecomp}) we obtain
\begin{equation}
\Theta_{P} = \Theta_{P_{1}} \cup \cdots \cup \Theta_{P_{r}},
\end{equation}
such that $\mathfrak{p}_{\Theta_{P_{j}}} = \mathfrak{p}_{j} \subset \mathfrak{g}_{j}^{\mathbbm{C}}$ is a parabolic Lie subalgebra defined by
\begin{equation}
\mathfrak{p}_{j} = \bigg ( \sum_{\alpha \in \Pi_{j}^{+}}\mathfrak{g}_{\alpha}^{(j)}\bigg) \oplus \mathfrak{h}_{j} \oplus \bigg ( \sum_{\alpha \in \langle \Theta_{P_{j}} \rangle^{-}}\mathfrak{g}_{\alpha}^{(j)}\bigg),
\end{equation}
for every $j = 1,\ldots,r$. The above ideas show that each factor in the decomposition given in Eq. (\ref{flagdecompgeral}) bears the same Lie-theoretical features as the Cartesian product $X$ described in Eq. (\ref{cartesian}). Moreover, given a negative line bundle $L \to X$, it follows that 
\begin{equation}
L = \text{pr}_{1}^{\ast} L_{1} \otimes \cdots \otimes \text{pr}_{r}^{\ast} L_{r},
\end{equation}
where $\text{pr}_{j} \colon X \to G_{j}^{\mathbbm{C}}/P_{j}$ is the canonical projection and $L_{j} \in {\text{Pic}}(G_{j}^{\mathbbm{C}}/P_{j})$, $j = 1,\ldots,r$. Therefore, from Eq. (\ref{withouttorsion}), and the fact that ${\text{Pic}}(G_{j}^{\mathbbm{C}}/P_{j})$ can be described purely in terms of Lie-theoretical elements (e.g. \cite{Bott}, \cite{BorelH}), the study of compact homogeneous l.c.K. manifolds reduces to the study of principal elliptic bundles defined by negative line bundles over a compact simply connected homogeneous Hodge manifold defined as
\begin{equation}
X = G^{\mathbbm{C}}/P = G/G \cap P,
\end{equation}
where $G^{\mathbbm{C}}$ is a connected simply connected complex simple Lie group with compact real form given by $G$, and $P \subset G^{\mathbbm{C}}$ is a parabolic Lie subgroup. In order to emphasize the parabolic Lie subgroup which defines a complex flag manifold, as well as the parabolic geometry defined by the pair $(G^{\mathbbm{C}},P)$, we shall denote a flag manifold by $X_{P} = G^{\mathbbm{C}}/P$.

\section{Proof of main results and examples} 

In this section, we develop a study of principal elliptic fibrations over complex flag manifolds using elements of representation theory of Lie algebras and Lie groups. The aim is to prove Theorem \ref{Theorem1}, Theorem \ref{Theorem2} and Theorem \ref{Theorem3}. In what follows, we will keep the notation introduced in the previous section.

\subsection{Vaisman structures on principal elliptic bundles over flag manifolds}
In order to study principal elliptic fibrations defined by negative line bundles over complex flag manifolds, let us collect some general facts about line bundles in the homogeneous setting through Lie-theoretical elements. 

\subsubsection{Line bundles over complex flag manifolds}
\label{subsec3.1}
Let $\mathfrak{g}^{\mathbbm{C}}$ be a complex simple Lie algebra, by fixing a  Cartan subalgebra $\mathfrak{h}$ and a simple root system $\Sigma \subset \mathfrak{h}^{\ast}$, we have a (triangular) decomposition of $\mathfrak{g}^{\mathbbm{C}}$ given by
\begin{center}
$\mathfrak{g}^{\mathbbm{C}} = \mathfrak{n}^{-} \oplus \mathfrak{h} \oplus \mathfrak{n}^{+}$, 
\end{center}
where $\mathfrak{n}^{-} = \sum_{\alpha \in \Pi^{-}}\mathfrak{g}_{\alpha}$ and $\mathfrak{n}^{+} = \sum_{\alpha \in \Pi^{+}}\mathfrak{g}_{\alpha}$, here we denote by $\Pi = \Pi^{+} \cup \Pi^{-}$ the root system associated to the simple root system $\Sigma = \{\alpha_{1},\ldots,\alpha_{l}\} \subset \mathfrak{h}^{\ast}$. Let us denote by $\kappa$ the Cartan-Killing form of $\mathfrak{g}^{\mathbbm{C}}$. From this, for every  $\alpha \in \Pi^{+}$ we have $h_{\alpha} \in \mathfrak{h}$, such  that $\alpha = \kappa(\cdot,h_{\alpha})$, and we can choose $x_{\alpha} \in \mathfrak{g}_{\alpha}$ and $y_{\alpha} \in \mathfrak{g}_{-\alpha}$, such that $[x_{\alpha},y_{\alpha}] = h_{\alpha}$. Moreover, for every $\alpha \in \Sigma$, we can set 
$$h_{\alpha}^{\vee} = \frac{2}{\kappa(h_{\alpha},h_{\alpha})}h_{\alpha}.$$ 
The fundamental weights $\{\omega_{\alpha} \ | \ \alpha \in \Sigma\} \subset \mathfrak{h}^{\ast}$ are defined by requiring that $\omega_{\alpha}(h_{\beta}^{\vee}) = \delta_{\alpha \beta}$, $\forall \alpha, \beta \in \Sigma$. We denote by $$\Lambda_{\mathbbm{Z}_{\geq 0}}^{\ast} = \bigoplus_{\alpha \in \Sigma}\mathbbm{Z}_{\geq 0}\omega_{\alpha},$$ the set of integral dominant weights of $\mathfrak{g}^{\mathbbm{C}}$. From the Lie algebra representation theory, for every $\mu \in \Lambda_{\mathbbm{Z}_{\geq 0}}^{\ast}$ we have an irreducible $\mathfrak{g}^{\mathbbm{C}}$-module $V(\mu)$ with highest weight $\mu$. We denote by $v_{\mu}^{+} \in V(\mu)$ the highest weight vector associated to $\mu \in  \Lambda_{\mathbbm{Z}_{\geq 0}}^{\ast}$. Let us denote by $G^{\mathbbm{C}}$ the connected, simply connected, and complex Lie group with simple Lie algebra $\mathfrak{g}^{\mathbbm{C}}$, and consider $G \subset G^{\mathbbm{C}}$ as being a compact real form for $G^{\mathbbm{C}}$. In this setting, given a parabolic Lie subgroup $P \subset G^{\mathbbm{C}}$, without loss of generality, we can suppose that
\begin{center}
$P  = P_{\Theta}$, \ for some \ $\Theta \subseteq \Sigma$.
\end{center}
Recall that, by definition, we have $P_{\Theta} = N_{G^{\mathbbm{C}}}(\mathfrak{p}_{\Theta})$, where ${\text{Lie}}(P_{\Theta}) = \mathfrak{p}_{\Theta} \subset \mathfrak{g}^{\mathbbm{C}}$ is a parabolic Lie subalgebra given by
\begin{center}

$\mathfrak{p}_{\Theta} = \mathfrak{n}^{+} \oplus \mathfrak{h} \oplus \mathfrak{n}(\Theta)^{-}$, \ with \ $\mathfrak{n}(\Theta)^{-} = \displaystyle \sum_{\alpha \in \langle \Theta \rangle^{-}} \mathfrak{g}_{\alpha}$, 

\end{center}
and $N_{G^{\mathbbm{C}}}(\mathfrak{p}_{\Theta})$ is its normalizer in  $G^{\mathbbm{C}}$. In what follows, it will be useful for us to consider the following basic chain of Lie subgroups

\begin{center}

$T^{\mathbbm{C}} \subset B \subset P \subset G^{\mathbbm{C}}$.

\end{center}
For each element in the aforementioned chain of Lie subgroups we have the following characterization: 

\begin{itemize}

\item $T^{\mathbbm{C}} = \exp(\mathfrak{h})$;  \ \ (complex torus)

\item $B = N^{+}T^{\mathbbm{C}}$, where $N^{+} = \exp(\mathfrak{n}^{+})$; \ \ (Borel subgroup)

\item $P = P_{\Theta} = N_{G^{\mathbbm{C}}}(\mathfrak{p}_{\Theta})$, for some $\Theta \subset \Sigma \subset \mathfrak{h}^{\ast}$. \ \ (parabolic subgroup)

\end{itemize}
Given a complex flag manifold $X_{P} = G^{\mathbbm{C}}/P$, the following theorem allows us to describe all $G$-invariant K\"{a}hler structures on $X_{P}$.
\begin{theorem}[Azad-Biswas, \cite{AZAD}]
\label{AZADBISWAS}
Let $\omega \in \Omega^{1,1}(X_{P})^{G}$ be a closed invariant real $(1,1)$-form, then we have

\begin{center}

$\pi^{\ast}\omega = \sqrt{-1}\partial \overline{\partial}\varphi$,

\end{center}
where $\pi \colon G^{\mathbbm{C}} \to X_{P}$ is the projection, and $\varphi \colon G^{\mathbbm{C}} \to \mathbbm{R}$ is given by 
\begin{center}
$\varphi(g) = \displaystyle \sum_{\alpha \in \Sigma \backslash \Theta}c_{\alpha}\log||gv_{\omega_{\alpha}}^{+}||$, \ \ \ \ $(\forall g \in G^\mathbbm{C})$
\end{center}
with $c_{\alpha} \in \mathbbm{R}$, $\forall \alpha \in \Sigma \backslash \Theta$. Conversely, every function $\varphi$ as above defines a closed invariant real $(1,1)$-form $\omega_{\varphi} \in \Omega^{1,1}(X_{P})^{G}$. Moreover, $\omega_{\varphi}$ defines a K\"{a}hler form on $X_{P}$ if and only if $c_{\alpha} > 0$,  $\forall \alpha \in \Sigma \backslash \Theta$.
\end{theorem}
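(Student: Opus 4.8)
The plan is to first reduce the classification of invariant forms to a purely cohomological statement, and then to exhibit an explicit basis of invariant representatives coming from the highest weight orbits. The positivity clauses will follow by identifying the $\omega_{\alpha}$ as pullbacks of Fubini–Study forms.

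First I would show that the map sending a $G$-invariant closed real $(1,1)$-form to its de Rham class in $H^{2}(X_{P},\mathbb{R})$ is injective. Indeed, if such an $\omega$ is exact, the $\partial\overline{\partial}$-lemma on the compact Kähler manifold $X_{P}$ gives $\omega=\sqrt{-1}\partial\overline{\partial}f$; averaging $f$ over the compact group $G$ with respect to the normalized Haar measure produces a $G$-invariant potential $\widetilde{f}$ with $\omega=\sqrt{-1}\partial\overline{\partial}\widetilde{f}$, and since $X_{P}=G/(G\cap P)$ is $G$-homogeneous, $\widetilde{f}$ is constant, forcing $\omega=0$. Because $X_{P}$ is simply connected with ${\text{Pic}}(X_{P})=\bigoplus_{\alpha\in\Sigma\setminus\Theta}\mathbb{Z}c_{1}(\mathscr{O}_{\alpha}(1))$ and $H^{2}(X_{P})$ is of pure type $(1,1)$, we have $\dim_{\mathbb{R}}H^{2}(X_{P},\mathbb{R})=|\Sigma\setminus\Theta|$, so it suffices to produce that many linearly independent invariant representatives.

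Next, for each $\alpha\in\Sigma\setminus\Theta$ I would fix a $G$-invariant Hermitian inner product on $V(\omega_{\alpha})$ (which exists since $G$ is compact) and consider the $G^{\mathbb{C}}$-equivariant holomorphic map $f_{\alpha}\colon X_{P}\to\mathbb{P}(V(\omega_{\alpha}))$, $gP\mapsto[g\cdot v_{\omega_{\alpha}}^{+}]$, which is well defined because $P=P_{\Theta}$ stabilizes the line $\mathbb{C}v_{\omega_{\alpha}}^{+}$ through the character $\chi_{\omega_{\alpha}}$. Setting $\omega_{\alpha}=f_{\alpha}^{\ast}\omega_{\mathrm{FS}}$ yields a $G$-invariant closed semi-positive $(1,1)$-form representing a positive multiple of $c_{1}(\mathscr{O}_{\alpha}(1))$. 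Since $\log\|\cdot\|^{2}$ is a local Fubini–Study potential and $g\mapsto g\cdot v_{\omega_{\alpha}}^{+}$ lifts $f_{\alpha}\circ\pi$, the function $\log\|g\cdot v_{\omega_{\alpha}}^{+}\|^{2}$ serves as a global potential for $\pi^{\ast}\omega_{\alpha}$; here the left $G$-invariance of the norm and the relation $\|gp\cdot v_{\omega_{\alpha}}^{+}\|=|\chi_{\omega_{\alpha}}(p)|\,\|g\cdot v_{\omega_{\alpha}}^{+}\|$ for $p\in P$ confirm compatibility with the descent, so $\pi^{\ast}\omega_{\alpha}=\sqrt{-1}\partial\overline{\partial}\log\|g\cdot v_{\omega_{\alpha}}^{+}\|^{2}$.

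Combining the two steps, the classes $c_{1}(\mathscr{O}_{\alpha}(1))$ form a basis of $H^{2}(X_{P},\mathbb{R})$ with invariant representatives $\omega_{\alpha}$, so injectivity forces any invariant closed real $(1,1)$-form to equal $\sum_{\alpha}c_{\alpha}\omega_{\alpha}$ for unique real $c_{\alpha}$, which pulls back to $\sqrt{-1}\partial\overline{\partial}\varphi$ with $\varphi=\sum_{\alpha}c_{\alpha}\log\|g v_{\omega_{\alpha}}^{+}\|$ (the factor $2$ being absorbed into the $c_{\alpha}$); the converse direction is immediate since each summand is of this form. For the positivity clauses, the semi-positivity of each $\omega_{\alpha}$ shows that $c_{\alpha}\geq 0$ yields a semi-positive form, while for $c_{\alpha}>0$ the form is positive on the tangent direction $\mathfrak{g}_{-\beta}$ precisely when $\sum_{\alpha}c_{\alpha}\langle\omega_{\alpha},\beta\rangle>0$, which holds for every root $\beta\in\Pi^{+}\setminus\langle\Theta\rangle$ spanning $T_{eP}X_{P}$; equivalently the line bundle $\bigotimes_{\alpha}\mathscr{O}_{\alpha}(c_{\alpha})$ is ample by Borel–Weil, so $\omega_{\varphi}$ is Kähler. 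The main obstacle I anticipate is exactly this last positivity characterization together with the bookkeeping identifying which weights $\omega_{\alpha}$ extend to $P_{\Theta}$-characters and how they pair with the roots of $X_{P}$; both hinge on the precise relation between the fundamental weights indexed by $\Sigma\setminus\Theta$ and the parabolic $P_{\Theta}$.
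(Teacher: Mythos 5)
Your proposal is correct in substance, but note first that the paper itself never proves this theorem: it is imported verbatim from Azad--Biswas \cite{AZAD}, so the only comparison available is with the source and with the machinery the paper builds around the statement in Subsection 6.1.1. Your two main ingredients are in fact the same ones the paper assembles for other purposes: the invariant representatives $\Omega_{\alpha}$ with global quasi-potentials $\pi^{\ast}\Omega_{\alpha} = \sqrt{-1}\partial\overline{\partial}\varphi_{\omega_{\alpha}}$, $\varphi_{\omega_{\alpha}}(g) = \tfrac{1}{2\pi}\log\|gv_{\omega_{\alpha}}^{+}\|^{2}$ (obtained there from the Chern connection of the Hermitian structure \ref{hermitian}), and Proposition \ref{C8S8.2Sub8.2.3P8.2.6} identifying $H^{2}(X_{P},\mathbb{R})$ with $\bigoplus_{\alpha}\mathbb{R}[\Omega_{\alpha}]$. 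What you add, and what Azad--Biswas organize differently, is the reduction step: they work upstairs on $G^{\mathbb{C}}$ (where $H^{2}(G^{\mathbb{C}},\mathbb{R})=0$, since $G^{\mathbb{C}}$ retracts onto the semisimple compact $G$) to produce a potential directly and then classify the resulting quasi-potentials; you instead prove injectivity of the map from invariant closed real $(1,1)$-forms to $H^{2}(X_{P},\mathbb{R})$ via the $\partial\overline{\partial}$-lemma plus Haar averaging, and conclude by matching cohomology classes against the basis $[\Omega_{\alpha}]$. Both routes are sound; yours is arguably cleaner because it isolates the uniqueness-of-invariant-representative principle that the paper invokes elsewhere (e.g.\ in the proof of Theorem \ref{Theo1}).

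Two points deserve repair. First, in the positivity step you check $\omega_{\varphi}>0$ only along the individual root directions $\mathfrak{g}_{-\beta}$, $\beta\in\Pi^{+}\setminus\langle\Theta\rangle^{+}$; to conclude positivity of the Hermitian form on all of $T_{eP}X_{P}$ you must observe that invariance under the maximal torus $T\subset G$ forces the form at $eP$ to be block-diagonal with respect to the weight-space decomposition $\mathfrak{m}=\sum_{\beta}\mathfrak{g}_{-\beta}$, and each block is one-dimensional -- one sentence, but it cannot be skipped. (Relatedly, the aside that $\bigotimes_{\alpha}\mathscr{O}_{\alpha}(c_{\alpha})$ ``is ample by Borel--Weil'' is meaningless for real $c_{\alpha}$ and should be dropped in favor of the direct root computation you already have.) Second, your argument produces \emph{arbitrary real} coefficients $c_{\alpha}$, and this is unavoidable: $-\Omega_{\alpha}$ is a closed invariant real $(1,1)$-form whose coefficient is negative, so the clause $c_{\alpha}\in\mathbb{R}_{\geq 0}$ in the statement as transcribed in the paper cannot hold for general $\omega$; the original Azad--Biswas theorem asserts $c_{\alpha}\in\mathbb{R}$, with positivity of all $c_{\alpha}$ characterizing the K\"ahler case. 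Your proof therefore establishes the correct form of the theorem, but you should say explicitly that the nonnegativity claim is a misstatement rather than silently producing real coefficients while the statement demands nonnegative ones.
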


\begin{remark}
\label{innerproduct}
It is worth pointing out that the norm $|| \cdot ||$ in the last theorem is a norm induced from some fixed $G$-invariant inner product $\langle \cdot, \cdot \rangle_{\alpha}$ on $V(\omega_{\alpha})$, for every $\alpha \in \Sigma \backslash \Theta$. 
\end{remark}

By means of the above theorem we can describe the unique $G$-invariant element in each integral class in $H^{2}(X_{P},\mathbbm{Z})$. In fact, consider the associated $P$-principal bundle $P \hookrightarrow G^{\mathbbm{C}} \to X_{P}$. By choosing a trivializing open covering $X_{P} = \bigcup_{i \in I}U_{i}$, in terms of $\check{C}$ech cocycles we can write 
\begin{center}
$G^{\mathbbm{C}} = \Big \{(U_{i})_{i \in I}, \psi_{ij} \colon U_{i} \cap U_{j} \to P \Big \}$.
\end{center}
From above, given a fundamental weight $\omega_{\alpha} \in \Lambda_{\mathbbm{Z}_{\geq 0}}^{\ast}$, by considering $\chi_{\omega_{\alpha}} \in {\text{Hom}}(T^{\mathbbm{C}},\mathbbm{C}^{\times})$, such that $d(\chi_{\omega_{\alpha}})_{e} = \omega_{\alpha}$, we can take the homomorphism $\chi_{\omega_{\alpha}} \colon P \to \mathbbm{C}^{\times}$, defined via holomorphic extension of $\chi_{\omega_{\alpha}}$. From this homomorphism we consider $\mathbbm{C}_{-\omega_{\alpha}}$ as being a $P$-space, such that $pz = \chi_{\omega_{\alpha}}(p)^{-1}z$, $\forall p \in P$, and $\forall z \in \mathbbm{C}$. By considering $\mathbbm{C}_{-\omega_{\alpha}}$ we can form an associated holomorphic line bundle $\mathscr{O}_{\alpha}(1) = G^{\mathbbm{C}} \times_{P}\mathbbm{C}_{-\omega_{\alpha}}$, which can be described in terms of $\check{C}$ech cocycles by
\begin{equation}
\label{linecocycle}
\mathscr{O}_{\alpha}(1) = \Big \{(U_{i})_{i \in I},\chi_{\omega_{\alpha}}^{-1} \circ \psi_{i j} \colon U_{i} \cap U_{j} \to \mathbbm{C}^{\times} \Big \},
\end{equation}
that is, $\mathscr{O}_{\alpha}(1) = \{g_{ij}\} \in \check{H}^{1}(X_{P},\mathscr{O}_{X_{P}}^{\ast})$, such that $g_{ij} = \chi_{\omega_{\alpha}}^{-1} \circ \psi_{i j}$, for every $i,j \in I$. Notice that we can also realize $\mathscr{O}_{\alpha}(1)$ as a quotient space of $G^{\mathbbm{C}} \times \mathbbm{C}_{-\omega_{\alpha}}$ by the equivalence relation $``\sim"$ defined by
\begin{equation}
\label{eqrelationbundle}
(g,z) \sim (h,w) \Longleftrightarrow \exists p \in P, \ {\text{such that}} \ h = gp \ {\text{and}} \ w = p^{-1}z = \chi_{\omega_{\alpha}}(p)z.
\end{equation}
Therefore, for a typical element $u \in \mathscr{O}_{\alpha}(1)$ we have $u = [g,z]$, for some $(g,z) \in G^{\mathbbm{C}} \times \mathbbm{C}_{-\omega_{\alpha}}$.
\begin{remark}
\label{parabolicdec}
We observe that, if we have a parabolic Lie subgroup $P \subset G^{\mathbbm{C}}$, such that $P = P_{\Theta}$, the decomposition 
\begin{equation}
P_{\Theta} = \big[P_{\Theta},P_{\Theta} \big]T(\Sigma \backslash \Theta)^{\mathbbm{C}},
\end{equation}
see for instance \cite[Proposition 8]{Akhiezer}, shows us that ${\text{Hom}}(P,\mathbbm{C}^{\times}) = {\text{Hom}}(T(\Sigma \backslash \Theta)^{\mathbbm{C}},\mathbbm{C}^{\times})$. Therefore, if we take $\omega_{\alpha} \in \Lambda_{\mathbbm{Z}_{\geq 0}}^{\ast}$, such that $\alpha \in \Theta$, it follows that $\mathscr{O}_{\alpha}(1) = X_{P} \times \mathbbm{C}$, i.e., the associated holomorphic line bundle $\mathscr{O}_{\alpha}(1)$ is trivial.
\end{remark}

Given $\mathscr{O}_{\alpha}(1) \in {\text{Pic}}(X_{P})$, such that $\alpha \in \Sigma \backslash \Theta$, as described previously, if we consider an open covering $X_{P} = \bigcup_{i \in I} U_{i}$ which trivializes both $P \hookrightarrow G^{\mathbbm{C}} \to X_{P}$ and $ \mathscr{O}_{\alpha}(1) \to X_{P}$, such that $\alpha \in \Sigma \backslash \Theta$, by taking a collection of local sections $(s_{i})_{i \in I}$, such that $s_{i} \colon U_{i} \to G^{\mathbbm{C}}$, we can define $h_{i} \colon U_{i} \to \mathbbm{R}^{+}$, such that 
\begin{equation}
\label{functionshermitian}
h_{i} =  {\mathrm{e}}^{-2\pi \varphi_{\omega_{\alpha}} \circ s_{i}} = \frac{1}{||s_{i}v_{\omega_{\alpha}}^{+}||^{2}},
\end{equation}
for every $i \in I$. The collection of functions $(h_{i})_{i \in I}$ satisfy $h_{j} = |\chi_{\omega_{\alpha}}^{-1} \circ \psi_{ij}|^{2}h_{i}$ on $U_{i} \cap U_{j} \neq \emptyset$, here we have used that $s_{j} = s_{i}\psi_{ij}$ on $U_{i} \cap U_{j} \neq \emptyset$, and $pv_{\omega_{\alpha}}^{+} = \chi_{\omega_{\alpha}}(p)v_{\omega_{\alpha}}^{+}$, for every $p \in P$, such that $\alpha \in \Sigma \backslash \Theta$. Hence, we have a collection of functions $(h_{i})_{i \in I}$ which satisfies on $U_{i} \cap U_{j} \neq \emptyset$ the following relation
\begin{equation}
\label{collectionofequ}
h_{j} = |g_{ij}|^{2}h_{i},
\end{equation}
such that $g_{ij} = \chi_{\omega_{\alpha}}^{-1} \circ \psi_{i j}$, where $i,j \in I$. From this, we can define a Hermitian structure $H$ on $\mathscr{O}_{\alpha}(1)$ by taking on each trivialization $f_{i} \colon L_{\chi_{\omega_{\alpha}}} \to U_{i} \times \mathbbm{C}$ a metric defined by
\begin{equation}
\label{hermitian}
H(f_{i}^{-1}(x,v),f_{i}^{-1}(x,w)) = h_{i}(x) v\overline{w},
\end{equation}
for all $(x,v),(x,w) \in U_{i} \times \mathbbm{C}$. The Hermitian metric above induces a Chern connection $\nabla = d + \partial \log H$ with curvature $F_{\nabla}$ satisfying 
\begin{equation}
\displaystyle \frac{\sqrt{-1}}{2\pi}F_{\nabla} \Big |_{U_{i}} = \frac{\sqrt{-1}}{2\pi} \partial \overline{\partial}\log \Big ( \big | \big | s_{i}v_{\omega_{\alpha}}^{+}\big | \big |^{2} \Big).
\end{equation}
Hence, by considering the $G$-invariant $(1,1)$-form $\Omega_{\alpha} \in \Omega^{1,1}(X_{P})^{G}$, which satisfies $\pi^{\ast}\Omega_{\alpha} = \sqrt{-1}\partial \overline{\partial} \varphi_{\omega_{\alpha}}$, where $\pi \colon G^{\mathbbm{C}} \to G^{\mathbbm{C}} / P = X_{P}$, and $\varphi_{\omega_{\alpha}}(g) = \frac{1}{2\pi}\log||gv_{\omega_{\alpha}}^{+}||^{2}$, $\forall g \in G^{\mathbbm{C}}$, we have $c_{1}(\mathscr{O}_{\alpha}(1)) = \big [ \Omega_{\alpha}\big]$. From the ideas described above we have the following result.
\begin{proposition}
\label{C8S8.2Sub8.2.3P8.2.6}
Let $X_{P}$ be a complex flag manifold associated to some parabolic Lie subgroup $P = P_{\Theta}\subset G^{\mathbbm{C}}$. Then, we have
\begin{equation}
\label{picardeq}
{\text{Pic}}(X_{P}) = H^{1,1}(X_{P},\mathbbm{Z}) = H^{2}(X_{P},\mathbbm{Z}) = \displaystyle \bigoplus_{\alpha \in \Sigma \backslash \Theta}\mathbbm{Z}\big [\Omega_{\alpha} \big ].
\end{equation}
\end{proposition}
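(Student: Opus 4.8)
The plan is to break the claimed chain of equalities into three independent pieces and treat them in turn with the exponential sheaf sequence, the Hodge theory of flag manifolds, and the character--line bundle dictionary recorded above.

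First, to identify $\text{Pic}(X_P)$ with $H^{2}(X_P,\mathbb{Z})$, I would feed the exponential sequence $0 \to \mathbb{Z} \to \mathscr{O}_{X_P} \to \mathscr{O}_{X_P}^{\ast} \to 0$ into its long exact cohomology sequence, the relevant stretch of which reads
\[
H^{1}(X_P,\mathscr{O}_{X_P}) \longrightarrow \text{Pic}(X_P) \xrightarrow{\;c_{1}\;} H^{2}(X_P,\mathbb{Z}) \longrightarrow H^{2}(X_P,\mathscr{O}_{X_P}).
\]
Because $X_P$ is a complex flag manifold, its Dolbeault cohomology is concentrated in bidegrees $(p,p)$; in particular the Hodge numbers $h^{0,1}$ and $h^{0,2}$ vanish, so the two outer terms are zero and $c_{1}$ is an isomorphism $\text{Pic}(X_P)\cong H^{2}(X_P,\mathbb{Z})$. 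This settles the first equality once the second is in hand.

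Next, for the middle equality $H^{1,1}(X_P,\mathbb{Z})=H^{2}(X_P,\mathbb{Z})$, I would invoke the Bruhat (Schubert-cell) decomposition of $X_P$: it provides a CW structure with cells only in even real dimension, indexed by the minimal coset representatives of $W_P\backslash W$. Consequently $H^{\bullet}(X_P,\mathbb{Z})$ is torsion-free and concentrated in even degrees, and each generator is the fundamental class of a Schubert variety, hence of Hodge type $(p,p)$. In degree two this says $H^{2}(X_P,\mathbb{C})=H^{1,1}(X_P)$, and combined with torsion-freeness it gives $H^{1,1}(X_P,\mathbb{Z})=H^{2}(X_P,\mathbb{Z})$.

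Finally, for the explicit basis I would use the character--line bundle correspondence. Since $G^{\mathbb{C}}$ is connected and simply connected, $\text{Pic}(G^{\mathbb{C}})$ is trivial and every holomorphic line bundle on $X_P=G^{\mathbb{C}}/P$ descends from a character of $P$, yielding an isomorphism $\text{Hom}(P,\mathbb{C}^{\times})\xrightarrow{\sim}\text{Pic}(X_P)$, $\chi_{\omega_\alpha}\mapsto\mathscr{O}_{\alpha}(1)$. By the parabolic decomposition $P_{\Theta}=[P_{\Theta},P_{\Theta}]\,T(\Sigma\backslash\Theta)^{\mathbb{C}}$ recalled in Remark~\ref{parabolicdec}, the group $\text{Hom}(P,\mathbb{C}^{\times})$ is free abelian on $\{\omega_{\alpha}:\alpha\in\Sigma\backslash\Theta\}$, so $\{\mathscr{O}_{\alpha}(1):\alpha\in\Sigma\backslash\Theta\}$ is a $\mathbb{Z}$-basis of $\text{Pic}(X_P)$. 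Transporting this basis through the isomorphism $c_{1}$ of the first step and using the identity $c_{1}(\mathscr{O}_{\alpha}(1))=[\Omega_{\alpha}]$ established just before the statement, I obtain that $\{[\Omega_{\alpha}]:\alpha\in\Sigma\backslash\Theta\}$ is a $\mathbb{Z}$-basis of $H^{2}(X_P,\mathbb{Z})$, which is exactly the asserted direct-sum decomposition. I expect the main obstacle to lie in this last step: the delicate point is to verify that $\chi\mapsto G^{\mathbb{C}}\times_{P}\mathbb{C}_{-\chi}$ is an \emph{isomorphism} onto all of $\text{Pic}(X_P)$ rather than merely a homomorphism into it --- surjectivity genuinely needs the simple-connectedness of $G^{\mathbb{C}}$ --- and to read off the freeness on the fundamental weights indexed by $\Sigma\backslash\Theta$ while discarding the characters attached to $\alpha\in\Theta$, which give trivial bundles. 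The analytic inputs (vanishing of $h^{0,1},h^{0,2}$ and the $(p,p)$-concentration) I would treat as standard consequences of the cell structure rather than reprove from scratch.
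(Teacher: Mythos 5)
Your proof is correct, but it reaches the conclusion by a genuinely different route than the paper. For the identification $\text{Pic}(X_P)\cong H^{2}(X_P,\mathbb{Z})$ the two arguments essentially coincide: your exponential-sequence computation using $h^{0,1}=h^{0,2}=0$ is the content of the Lefschetz theorem on $(1,1)$-classes together with $b_{1}(X_P)=0$ (hence $\text{Pic}^{0}(X_P)$ trivial) that the paper quotes. The real divergence is in the last equality of \ref{picardeq}. The paper never touches the Bruhat cell structure or the character dictionary there: it computes $\pi_{2}(X_P)\cong\pi_{1}(T(\Sigma\backslash\Theta)^{\mathbb{C}})=\mathbb{Z}^{\#(\Sigma\backslash\Theta)}$ from the homotopy sequence of the bundle $P\hookrightarrow G^{\mathbb{C}}\to X_P$, passes to $H_{2}(X_P,\mathbb{Z})$ by Hurewicz (using simple connectedness of $X_P$), and then exhibits dual bases through the intersection computation $\langle c_{1}(\mathscr{O}_{\alpha}(1)),[\mathbb{P}_{\beta}^{1}]\rangle=\delta_{\alpha\beta}$ against the Schubert curves $\mathbb{P}_{\alpha}^{1}=\overline{\exp(\mathfrak{g}_{-\alpha})x_{0}}$; this pairing proves freeness of $H^{2}$ and pins down the basis $\{c_{1}(\mathscr{O}_{\alpha}(1))\}$ in one stroke. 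You instead obtain torsion-freeness and the $(p,p)$-concentration from the Schubert CW structure (strictly more than the proposition needs, and it makes the middle equality transparent) and get the basis from the isomorphism $\text{Hom}(P,\mathbb{C}^{\times})\cong\text{Pic}(X_P)$ combined with the decomposition $P_{\Theta}=[P_{\Theta},P_{\Theta}]T(\Sigma\backslash\Theta)^{\mathbb{C}}$ of Remark \ref{parabolicdec}. Your route buys a self-contained, purely group-theoretic description of $\text{Pic}(X_P)$; its cost is that the two nontrivial inputs you lean on --- surjectivity of $\chi\mapsto G^{\mathbb{C}}\times_{P}\mathbb{C}_{-\chi}$, via triviality of $\text{Pic}(G^{\mathbb{C}})$ for simply connected semisimple groups, and injectivity, which requires knowing that invertible regular functions on $G^{\mathbb{C}}$ are constant --- must be quoted (you flag the first but not the second), whereas the paper's pairing with the classes $[\mathbb{P}_{\beta}^{1}]$ certifies the independence of the $c_{1}(\mathscr{O}_{\alpha}(1))$ by direct computation. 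Both arguments are complete at the sketch level given.
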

\begin{proof}

Let us sketch the proof. The last equality on the right-hand side of Eq. ($\ref{picardeq}$) follows from the following facts:

\begin{itemize}

\item $\pi_{2}(X_{P}) \cong \pi_{1}(T(\Sigma \backslash \Theta)^{\mathbbm{C}}) = \mathbbm{Z}^{|\Sigma \backslash \Theta|}$, where

$$T(\Sigma \backslash \Theta)^{\mathbbm{C}} = \exp \Big \{ \displaystyle \sum_{\alpha \in  \Sigma \backslash \Theta}a_{\alpha}h_{\alpha} \ \Big | \ a_{\alpha} \in \mathbbm{C} \Big \};$$

\item Since $X_{P}$ is simply connected, it follows that $H_{2}(X_{P},\mathbbm{Z}) \cong \pi_{2}(X_{P})$ (Hurewicz's theorem);

\item By taking $\mathbbm{P}_{\alpha}^{1} \hookrightarrow X_{P}$, such that 

$$\mathbbm{P}_{\alpha}^{1} = \overline{\exp(\mathfrak{g}_{-\alpha})x_{0}} \subset X_{P},$$ 
for all $\alpha \in \Sigma \backslash \Theta$, where $x_{0} = eP \in X_{P}$, it follows that 

\begin{center}

$\Big \langle c_{1}(\mathscr{O}_{\alpha}(1)), \big [ \mathbbm{P}_{\beta}^{1}\big] \Big \rangle = \displaystyle \int_{\mathbbm{P}_{\beta}^{1}} c_{1}(\mathscr{O}_{\alpha}(1)) = \delta_{\alpha \beta},$

\end{center}
for every $\alpha,\beta \in \Sigma \backslash \Theta$. Hence, we obtain
\begin{center}

$\pi_{2}(X_{P}) = \displaystyle \bigoplus_{\alpha \in \Sigma \backslash \Theta} \mathbbm{Z}\big [ \mathbbm{P}_{\alpha}^{1}\big],$ \ \ and \ \ $H^{2}(X_{P},\mathbbm{Z}) = \displaystyle \bigoplus_{\alpha \in \Sigma \backslash \Theta}  \mathbbm{Z} c_{1}(\mathscr{O}_{\alpha}(1))$.

\end{center}
\end{itemize}
Moreover, form above we also have $H^{1,1}(X_{P},\mathbbm{Z}) = H^{2}(X_{P},\mathbbm{Z})$. In order to conclude the proof, from the Lefschetz theorem on (1,1)-classes \cite{DANIEL}, and from the fact that ${\text{rk}}({\text{Pic}}^{0}(X_{P})) = 0$, we obtain the first equality in Eq. (\ref{picardeq}).
\end{proof}
From Theorem \ref{AZADBISWAS} and Proposition \ref{C8S8.2Sub8.2.3P8.2.6}, given a negative line bundle $L \in {\text{Pic}}(X_{P})$, we have
\begin{equation}
\label{negativedecomp}
L = \bigotimes_{\alpha \in \Sigma \backslash \Theta}\mathscr{O}_{\alpha}(1)^{\otimes \langle c_{1}(L),[ \mathbbm{P}_{\alpha}^{1}] \rangle},
\end{equation}
where $\big \langle c_{1}(L),\big [ \mathbbm{P}_{\alpha}^{1} \big] \big \rangle < 0$, $\forall \alpha \in \Sigma \backslash \Theta$. For the sake of simplicity, we denote $\ell_{\alpha} = - \big \langle c_{1}(L),\big [ \mathbbm{P}_{\alpha}^{1} \big] \big \rangle$, and $\mathscr{O}_{\alpha}(1)^{\otimes k} = \mathscr{O}_{\alpha}(k)$, for every $k \in \mathbbm{Z}$, and every $\alpha \in \Sigma \backslash \Theta$. Thus, we can rewrite Eq. (\ref{negativedecomp}) as follows
\begin{equation}
L = \bigotimes_{\alpha \in \Sigma \backslash \Theta}\mathscr{O}_{\alpha}(-\ell_{\alpha}).
\end{equation}
From the characterization above, if we consider the weight $\mu(L) \in  \Lambda_{\mathbbm{Z}_{\geq 0}}^{\ast}$ defined by
\begin{equation}
\label{negativeweight}
\mu(L) := - \sum_{\alpha \in \Sigma \backslash \Theta} \big \langle c_{1}(L),\big [ \mathbbm{P}_{\alpha}^{1} \big] \big \rangle \omega_{\alpha} = \sum_{\alpha \in \Sigma \backslash \Theta} \ell_{\alpha} \omega_{\alpha},
\end{equation}
we can assign to each negative line bundle $L \in {\text{Pic}}(X_{P})$ an irreducible $\mathfrak{g}^{\mathbbm{C}}$-module $V(\mu(L))$ with highest weight vector $v_{\mu(L)}^{+} \in V(\mu(L))$, such that
\begin{equation}
\displaystyle V(\mu(L)) \subset \bigotimes_{\alpha \in \Sigma \backslash \Theta}V(\omega_{\alpha})^{\otimes \ell_{\alpha}}, \ \ {\text{and}} \ \ \displaystyle v_{\mu(L)}^{+} = \bigotimes_{\alpha \in \Sigma \backslash \Theta}(v_{\omega_{\alpha}}^{+})^{\otimes \ell_{\alpha}},
\end{equation}
see for instance \cite[p. 186]{PARABOLICTHEORY}. Also, notice that, associated to the weight defined in Eq. (\ref{negativeweight}) we have a character $\chi^{(L)} \colon P \to \mathbbm{C}^{\times}$, such that
\begin{equation}
\label{negativecharacter}
\chi^{(L)} = \prod_{\alpha \in \Sigma \backslash \Theta}\chi_{\omega_{\alpha}}^{\ell_{\alpha}}.
\end{equation}
Thus, in terms of $\check{C}$ech cocycles we have $L = \{g_{ij}\} \in \check{H}^{1}(X_{P},\mathscr{O}_{X_{P}}^{\ast})$, with $g_{ij} = \chi^{(L)} \circ \psi_{i j}$, $\forall i,j \in I$, cf. Eq. (\ref{linecocycle}).
\begin{remark}[Projective embedding]
\label{proj}
In the setting above, the ample line bundle $L^{-1} \in {\text{Pic}}(X_{P})$ is in fact very ample, i.e., we have a projective embedding 
\begin{center}
$\iota \colon X_{P} \hookrightarrow \mathbbm{P}(V(\mu(L))) = {\text{Proj}}\big (H^{0}(X_{P},L^{-1})^{\ast} \big),$
\end{center}
see for instance \cite[Page 193]{Flaginterplay}, \cite[Theorem 3.2.8]{PARABOLICTHEORY}, \cite{TAYLOR}. Thus, we have the identification
\begin{equation}
\label{tautological}
L \cong \iota^{\ast}\mathscr{O}_{\mathbbm{P}(V(\mu(L))}(-1) = \Big \{ \big ([x],v \big ) \in X_{P} \times V(\mu(L)) \ \ \Big | \ \ v \in \langle x \rangle_{\mathbbm{C}}\Big \},
\end{equation}
where $\mathscr{O}_{\mathbbm{P}(V(\mu(L))}(-1)$ is the tautological line bundle over $\mathbbm{P}(V(\mu(L))$, here we consider the geometric realization $V(\mu(L)) = H^{0}(X_{P},L^{-1})^{\ast}$ via Borel-Weil theorem.
\end{remark}

\subsection{Proof of Theorem 1} In what follows, we restate and prove our first result.

\begin{theorem}
\label{Maintheo1}
Let $X_{P}$ be a complex flag manifold, associated to some parabolic Lie subgroup $P = P_{\Theta} \subset G^{\mathbbm{C}}$, and let $L \in {\text{Pic}}(X_{P})$ be a negative line bundle. Then, for every $\lambda \in \mathbbm{C}^{\times}$, such that $|\lambda| <1$, we have that the $T_{\mathbbm{C}}^{1}$-principal bundle
\begin{equation}
\label{quotient}
M = {\rm{Tot}}(L^{\times})/\Gamma_{\lambda}, \ \ {\text{such that}}  \ \ \Gamma_{\lambda} = \big \{ \lambda^{n} \in \mathbbm{C}^{\times} \ \big | \ n \in \mathbbm{Z}  \big \},
\end{equation}
admits a Vaisman structure completely determined by the $T_{\mathbbm{C}}^{1}$-principal connection $\Psi \in \Omega^{1}(M;\mathbbm{C})$ locally described by
\begin{equation}
\label{T1connection}
\displaystyle \Psi = \frac{1}{\sqrt{-1}}\bigg [\partial \log \Big ( \big | \big |s_{U}v_{\mu(L)}^{+} \big | \big |^{2}\Big ) + \frac{dw}{w}\bigg],
\end{equation}
for some local section $s_{U} \colon U \subset X_{P} \to G^{\mathbbm{C}}$, where $v_{\mu(L)}^{+}$ is the highest weight vector of weight $\mu(L)$ associated to the irreducible $\mathfrak{g}^{\mathbbm{C}}$-module $V(\mu(L)) = H^{0}(X_{P},L^{-1})^{\ast}$.
\end{theorem}

\begin{proof}
At first, by fixing a suitable trivializing open covering $X_{P} = \bigcup_{i \in I}U_{i}$, and considering the characterization 
\begin{center}
$L = \Big \{(U_{i})_{i \in I},g_{ij}:=\chi^{(L)} \circ \psi_{i j} \colon U_{i} \cap U_{j} \to \mathbbm{C}^{\times} \Big \},$
\end{center}
we can define a Hermitian structure on $L \to X_{P}$ by setting
\begin{equation}
\label{hermiotianinvariant}
\displaystyle H \big (f_{i}^{-1}(z,v),f_{i}^{-1}(z,w) \big ) = \Big ( \prod_{\alpha \in \Sigma \backslash \Theta}\big | \big |s_{i}(z)v_{\omega_{\alpha}}^{+} \big | \big |^{2\ell_{\alpha}}\Big)v\overline{w},
\end{equation}
for any local trivialization $(L|_{U_{i}},f_{i})$ and any local section $s_{i} \colon U_{i} \to G^{\mathbbm{C}}$, $\forall i \in I$. Since we have $s_{j} = s_{i}\psi_{ij}$ on $U_{i} \cap U_{j} \neq \emptyset$, $(f_{i} \circ f_{j}^{-1})(z,w) = (z,g_{ij}(z)w)$ on $U_{i} \cap U_{j} \neq \emptyset$, and $pv_{\omega_{\alpha}}^{+} = \chi_{\omega_{\alpha}}(p)v_{\omega_{\alpha}}^{+}$, for every $p \in P$, such that $\alpha \in \Sigma \backslash \Theta$, it follows from Eq. (\ref{negativecharacter}) that 
\begin{center}
$\displaystyle H \big (f_{j}^{-1}(z,v),f_{j}^{-1}(z,w) \big ) = \big |\chi^{(L)}(\psi_{ij}(z)) \big |^{2}\Big ( \prod_{\alpha \in \Sigma \backslash \Theta}\big | \big |s_{i}(z)v_{\omega_{\alpha}}^{+} \big | \big |^{2\ell_{\alpha}}\Big)v\overline{w} = H \big (f_{i}^{-1}(z,g_{ij}(z)v),f_{i}^{-1}(z,g_{ij}(z)w) \big ),$
\end{center}
$\forall (z,w),(z,v) \in U_{i}\cap U_{j} \times \mathbbm{C}$. Hence, from the Hermitian structure above we can define a K\"{a}hler potential ${\rm{K}}_{H} \colon {\text{Tot}}(L^{\times}) \to \mathbbm{R}^{+}$, by setting ${\rm{K}}_{H}(u) = H(u,u)$. From this, locally on $L^{\times}|_{U_{i}}$ we have
\begin{equation}
\displaystyle {\rm{K}}_{H}(z,w) = {\rm{K}}_{H}(f_{i}^{-1}(z,w)) = \Big ( \prod_{\alpha \in \Sigma \backslash \Theta}\big | \big |s_{i}(z)v_{\omega_{\alpha}}^{+} \big | \big |^{2\ell_{\alpha}}\Big)w\overline{w}. 
\end{equation}
In order to obtain Eq. (\ref{T1connection}), we proceed as follows: Since we have
\begin{equation}
\displaystyle V(\mu(L)) \subset \bigotimes_{\alpha \in \Sigma \backslash \Theta}V(\omega_{\alpha})^{\otimes \ell_{\alpha}}, \ \ {\text{and}} \ \ \displaystyle v_{\mu(L)}^{+} = \bigotimes_{\alpha \in \Sigma \backslash \Theta}(v_{\omega_{\alpha}}^{+})^{\otimes \ell_{\alpha}},
\end{equation}
we can take a $G$-invariant inner product on $V(\mu(L))$ induced from a $G$-invariant inner product $\langle \cdot, \cdot \rangle_{\alpha}$ on each factor $V(\omega_{\alpha})$, $\forall \alpha \in \Sigma \backslash \Theta$, such that\footnote{See for instance \cite{Greub}.} 
\begin{equation}
\label{innerinv}
\Big \langle  \bigotimes_{\alpha \in \Sigma \backslash \Theta} (v_{1}^{(\alpha)} \otimes \cdots \otimes v_{\ell_{\alpha}}^{(\alpha)}),  \bigotimes_{\alpha \in \Sigma \backslash \Theta} (w_{1}^{(\alpha)} \otimes \cdots \otimes w_{\ell_{\alpha}}^{(\alpha)}) \Big \rangle = \prod_{ \alpha \in \Sigma \backslash \Theta}\big \langle v_{1}^{(\alpha)},w_{1}^{(\alpha)} \big \rangle_{\alpha}\cdots \big \langle v_{\ell_{\alpha}}^{(\alpha)},w_{\ell_{\alpha}}^{(\alpha)} \big \rangle_{\alpha},
\end{equation}
see Remark \ref{innerproduct}. Considering the norm on $V(\mu(L))$ induced by the inner product above, we can rewrite
\begin{equation}
\label{Kpotentialcone}
\displaystyle {\rm{K}}_{H}\big (z,w \big ) = \Big ( \prod_{\alpha \in \Sigma \backslash \Theta}\big | \big |s_{i}(z)v_{\omega_{\alpha}}^{+} \big | \big |^{2\ell_{\alpha}}\Big)w\overline{w} = \Big (\big | \big |s_{i}(z)v_{\mu(L)}^{+} \big | \big |^{2} \Big)  w\overline{w}.
\end{equation}
From the K\"{a}hler potential above we have a $\mathbbm{C}^{\times}$-principal connection $\widetilde{\Psi} \in \Omega^{1}(L^{\times};\mathbbm{C})$, given by
\begin{center}
$\widetilde{\Psi} = \frac{1}{2}\big (d^{c} - \sqrt{-1}d\big )\log({\rm{K}}_{H}) = -\sqrt{-1} \partial \log({\rm{K}}_{H})$,
\end{center}
Hence, regarding $M = \big (L^{\times} \times \mathbbm{E}(\Lambda) \big) / \mathbbm{C}^{\times}$ as a principal elliptic bundle over $X_{P}$, such that $\mathbbm{E}(\Lambda) = \mathbbm{C}^{\times}/\Gamma_{\lambda}$, we have from $\widetilde{\Psi}$ a unique induced $T_{\mathbbm{C}}^{1}$-principal connection $\Psi \in \Omega^{1}(M;\mathbbm{C})$, locally described as in Eq. (\ref{T1connection}). From this, the result follows from Theorem \ref{vaismanbundle} and the computations presented in Section \ref{mainremark}.

\end{proof}
\begin{remark}
\label{metricdescription}
In the setting of Theorem \ref{Maintheo1}, we obtain an explicit description for the Lee form $\theta \in \Omega^{1}(M)$ associated to the Vaismans structure $(M,J,g)$ induced by $\Psi \in \Omega^{1}(M;\mathbbm{C})$ in terms of representation theory. In fact, from Eq. (\ref{T1connection}) a straightforward computation shows us that 
\begin{equation}
\label{leelocal}
\theta = -\bigg [d\log \Big ( \big | \big |s_{i}v_{\mu(L)}^{+} \big | \big |^{2}\Big ) + \frac{\overline{w}dw + wd\overline{w}}{|w|^{2}}\bigg],
\end{equation}
see for instance Eq. (\ref{globalpotlee}). Notice that we can also describe the anti-Lee form $\vartheta = -\theta \circ J \in \Omega^{1}(M)$ by
\begin{equation}
 \vartheta = -\bigg [d^{c}\log \Big ( \big | \big |s_{i}v_{\mu(L)}^{+} \big | \big |^{2}\Big )- \frac{\sqrt{-1}}{|w|^{2}}\big (\overline{w}dw - wd\overline{w} \big) \bigg ],
\end{equation}
see for instance Eq. (\ref{connectiontorus}), and Eq. (\ref{connectionlee}). In conclusion, Theorem \ref{Maintheo1} provides a systematic and constructive way to obtain explicit examples of Vaisman structures using elements of representation theory of semisimple Lie algebras.
\end{remark}

\subsection{Proof of Theorem 2}

Combining the previous theorem with the result provided in \cite{Gauduchon1}, we obtain the following theorem.

\begin{theorem}
\label{Theo1}
Let $(M,[g],J)$ be a compact homogeneous l.c.K. manifold, and let $K$ be the compact connected Lie group which acts on $M$ by preserving the l.c.K. structure. Suppose that the semisimple Lie subgroup $K_{{\text{ss}}}$ is simply connected and has a unique simple component. Then, the Lee form associated to the l.c.K. structure $([g],J)$ is (up to scale) completely determined by the $1$-form
\begin{equation}
\displaystyle \theta = -\bigg [d\log \Big ( \big | \big |s_{U}v_{\mu(L)}^{+} \big | \big |^{2}\Big ) + \frac{\overline{w}dw + wd\overline{w}}{|w|^{2}}\bigg],
\end{equation}
such that $s_{U} \colon U \subset X_{P} \to K_{\text{ss}}^{\mathbbm{C}}$ is some local section, $v_{\mu(L)}^{+}$ is the highest weight vector of weight $\mu(L)$ associated to the irreducible $\mathfrak{k}_{\text{ss}}^{\mathbbm{C}}$-module $V(\mu(L)) = H^{0}(X_{P},L^{-1})^{\ast}$, and $L \in {\text{Pic}}(X_{P})$ is a negative line bundle.
\end{theorem}

\begin{proof}
The result above can be obtained as follows. At first, we notice that from Theorem \ref{LCKisVaisman}, Theorem \ref{Vaismanregular}, and Theorem \ref{vaismanclass}, it follows that $M$ is a $T_{\mathbbm{C}}^{1}$-principal bundle over a complex flag manifold $X_{P} = K_{{\text{ss}}}^{\mathbbm{C}}/P$. Since $X_{P}$ has no torsion elements in $H^{2}(X_{P},\mathbbm{Z})$, it follows that there exists a negative line bundle $L \in {\text{Pic}}(X_{P})$ such that
\begin{center}
$M = {\rm{Tot}}(L^{\times})/\Gamma, \ \ {\text{where}}  \ \ \Gamma = \big \{ \lambda^{n} \in \mathbbm{C}^{\times} \ \big | \ n \in \mathbbm{Z}  \big \},$
\end{center}
for some $\lambda \in \mathbbm{C}^{\times}$, such that $|\lambda| <1$, see for instance Theorem \ref{vaismanclass}, and \cite[Theorem 4.2]{Tsukada}. Since  
\begin{equation}
\label{bettiharmonic}
\dim\mathscr{H}^{1}(M) = b_{1}(M) = 1,
\end{equation}
where $\mathscr{H}^{1}(M)$ is the space of harmonic 1-forms, e.g. \cite[Remark 6.2]{Dragomir}, it follows that $H_{DR}^{1}(M) = \mathbbm{R}[\theta]$, such that $\theta \in \Omega^{1}(M)$ is given by Eq. (\ref{leelocal}). Hence, we have that $\theta_{g} = c_{0}\theta + df$, for some $f \in C^{\infty}(M)$ and some real constant $c_{0} \in \mathbbm{R}$. After a conformal change $g \mapsto h = {\rm{e}}^{-f}g$, we obtain $\theta_{g} \mapsto \theta_{h} = \theta_{g} -df$. Therefore, in order to conclude the proof we just need to show that $df \equiv 0$. This last fact is a consequence of the following: since $\theta_{h} = c_{0}\theta$, we can define a $T_{\mathbbm{C}}^{1}$-principal connection $\Psi_{h} \in \Omega^{1}(M;\mathbbm{C})$, such that 
\begin{equation}
\displaystyle \Psi_{h} = \frac{c_{0}}{\sqrt{-1}}\bigg [\partial \log \Big ( \big | \big |s_{i}v_{\mu(L)}^{+} \big | \big |^{2}\Big ) + \frac{dw}{w}\bigg],
\end{equation}
notice that $\Psi_{h} = -\vartheta_{h} + \sqrt{-1}\theta_{h}$, where $\vartheta_{h} = -\theta_{h} \circ J$. On the other hand, we have a connection on $M$ induced by the l.c.K. structure $(\theta_{g},[g],J)$, such that $\Psi_{g} = -\vartheta_{g} + \sqrt{-1}\theta_{g}$. Hence, we obtain
\begin{center}
$\Psi_{g} = \Psi_{h} + d^{c}f + \sqrt{-1}df.$
\end{center}
Since both $d\Psi_{g}$ and $d\Psi_{h}$ descend to a real $K_{{\text{ss}}}$-invariant $(1,1)$-form on $X_{P}$, from the uniqueness of $K_{{\text{ss}}}$-invariant representative for the Chern class of the $T_{\mathbbm{C}}^{1}$-principal bundle $M$ over $X_{P}$, we conclude that $d\Psi_{g} = d\Psi_{h}$, i.e., $\sqrt{-1} \partial \overline{\partial} f = \frac{1}{2}dd^{c}f = 0$. Since $M$ is a connected compact manifold, it follows that $f$ is constant, so $df \equiv 0$.
\end{proof}

\subsection{Proof of Theorem 3}
Let us start by recalling some basic facts about K\"{a}hler-Einstein metrics on flag manifolds.

\begin{remark}[K\"{a}hler-Einstein metrics on flag manifolds]In the context of complex flag manifolds, the anticanonical line bundle can be described as follows. Let $X_{P}$ be a complex flag manifold associate to some parabolic Lie subgroup $P = P_{\Theta} \subset G^{\mathbbm{C}}$. By considering the identification

\begin{center}

$\mathfrak{m} = \displaystyle \sum_{\alpha \in \Pi^{+} \backslash \langle \Theta \rangle^{+}} \mathfrak{g}_{-\alpha} = T_{x_{0}}^{1,0}X_{P}$,
\end{center}
where $x_{0} = eP \in X_{P}$. We have the following characterization for $T^{1,0}X_{P}$ as an associated holomoprphic vector bundle to the $P$-principal bundle $P \hookrightarrow G^{\mathbbm{C}} \to X_{P}$

\begin{center}

$T^{1,0}X_{P} = G^{\mathbbm{C}} \times_{P} \mathfrak{m}$,

\end{center}
such that the twisted product on the right-hand side above is obtained from the isotropy representation ${\rm{Ad}} \colon P \to {\rm{GL}}(\mathfrak{m})$. From this, we observe the following facts:
\begin{enumerate}
\item Since $P = [P,P]T(\Sigma \backslash \Theta)^{\mathbbm{C}}$, $\forall g \in P$ we have $g = g'\exp(h)$, for some $g' \in [P,P]$ and some $h \in {\text{Lie}}(T(\Sigma \backslash \Theta)^{\mathbbm{C}})$;

\item From the decomposition above, we obtain 
\begin{center}
$\det({\rm{Ad}}(g)) = \det({\rm{Ad}}(g'))\det({\rm{Ad}}(\exp(h))) = {\rm{e}}^{{\rm{Tr}}({\rm{ad}}(h))}$,
\end{center}
here we have ${\rm{ad}}(h) \colon \mathfrak{m} \to  \mathfrak{m}$, such that ${\rm{ad}}(h)(v + \mathfrak{p}_{\Theta}) := {\rm{ad}}(h)(v) + \mathfrak{p}_{\Theta}$, for all $v + \mathfrak{p}_{\Theta} \in \mathfrak{m} = \mathfrak{g}^{\mathbbm{C}}/\mathfrak{p}_{\Theta}$;

\item By definition of $\mathfrak{m}$, it follows that 
\begin{center}
$\displaystyle {\rm{Tr}}({\rm{ad}}(h)) = - \sum_{\alpha \in \Pi^{+} \backslash \langle \Theta \rangle^{+}} \alpha(h) = - \delta_{P}(h)$.
\end{center}
Thus, we conclude that $\det({\rm{Ad}}(g)) = \chi_{\delta_{P}}^{-1}(g)$, $\forall g \in P$, so $\det \circ {\rm{Ad}} = \chi_{\delta_{P}}^{-1}$.
\end{enumerate}
From the facts listed above, it follows that 
\begin{equation}
\label{canonicalbundleflag}
K_{X_{P}}^{-1} = \det \big(T^{1,0}X_{P} \big) =\det \big ( G^{\mathbbm{C}} \times_{P} \mathfrak{m} \big )= L_{\chi_{\delta_{P}}}.
\end{equation}
Also, we have 
\begin{equation}
\chi_{\delta_{P}} = \displaystyle \prod_{\alpha \in \Sigma \backslash \Theta} \chi_{\omega_{\alpha}}^{\langle \delta_{P},h_{\alpha}^{\vee} \rangle} \Longrightarrow K_{X_{P}} = \bigotimes_{\alpha \in \Sigma \backslash \Theta}\mathscr{O}_{\alpha}(-\ell_{\alpha}),
\end{equation}
such that $\ell_{\alpha} = \langle \delta_{P}, h_{\alpha}^{\vee} \rangle, \forall \alpha \in \Sigma \backslash \Theta$. Notice that, from the description above, the Fano index of a complex flag manifold $X_{P}$ is given explicitly by
\begin{equation}   
I(X_{P}) = {\text{gcd}} \Big (  \langle \delta_{P}, h_{\alpha}^{\vee} \rangle \ \Big | \ \alpha \in \Sigma \backslash \Theta \Big ),    
\end{equation}
Thus, $I(X_{P})$ can be completely determined from the Cartan matrix of $\mathfrak{g}^{\mathbbm{C}}$.

Given a complex flag manifold $X_{P}$, associated to some parabolic Lie subgroup $P \subset G^{\mathbbm{C}}$, we can consider the invariant K\"{a}hler metric $\rho_{0} \in \Omega^{1,1}(X_{P})^{G}$, locally describe by
\begin{equation}
\label{riccinorm}
\rho_{0}|_{U} = \sqrt{-1}\partial \overline{\partial} \log \Big (\big | \big |s_{U}v_{\delta_{P}}^{+} \big| \big |^{2} \Big ),
\end{equation}
for some local section $s_{U} \colon U \subset X_{P} \to G^{\mathbbm{C}}$. In the description above we have used that $\mu(K_{X_{P}}) = \delta_{P}$ (cf. Eq. (\ref{negativeweight})). It is straightforward to see that 
\begin{center}
$\displaystyle c_{1}(X_{P}) = \Big [ \frac{\rho_{0}}{2\pi}\Big]$,
\end{center}
and by the uniqueness of $G$-invariant representative for $c_{1}(X_{P})$, see for instance \cite[page 13]{Algmodels}, it follows that 
\begin{center}
${\text{Ric}}^{\nabla}(\rho_{0}) = \rho_{0}$, 
\end{center}
i.e. $\rho_{0} \in \Omega^{1,1}(X_{P})^{G}$ defines a K\"{a}hler-Einstein metric (see \cite{MATSUSHIMA} for more details). 
\end{remark}

As we shall see, the K\"{a}hler-Einstein structure described in the previous remark (Eq. (\ref{riccinorm})) will play an important role in the proof of our next theorem. We also will need the following ideas.

\begin{remark}[Transversal K\"{a}hler-Einstein metrics] 
\label{homothetic}
Given a Sasaki manifold $(Q,g)$, such that $\dim_{\mathbbm{R}}(Q) = 2n+1$, with structure tensors $(\phi,\xi,\eta)$, since $Q$ is a ${\text{K}}$-contact manifold, we have that $\frac{d\eta}{2}$ defines a symplectic form on the distribution $\mathscr{D} = \ker(\eta)$, which satisfies $\mathscr{L}_{\xi}(\frac{d\eta}{2}) = 0$. Let us denote the induced transversal Riemannian metric on $\mathscr{D}$ by
\begin{equation}
g^{T} := \frac{1}{2}d\eta({\rm{id}}\otimes \phi),
\end{equation}
notice that $J = \phi|_{\mathscr{D}}$ defines an almost complex structure on $\mathscr{D} = \ker(\eta)$ which is in fact integrable, since $(\phi,\xi,\eta)$ is normal, thus $\omega^{T} := \frac{d\eta}{2}$ is a K\"{a}hler form. The relationship between the Ricci curvature of $g^{T}$ and $g$ is given by the following identities:

\begin{enumerate}

\item ${\text{Ric}}_{g}(X,Y) = {\text{Ric}}_{g^{T}}(X,Y) - 2g(X,Y)$, $\forall X,Y \in \mathscr{D} = \ker{\eta}$;

\item ${\text{Ric}}_{g}(X,\xi) = 2n\eta(X)$, $\forall X \in TQ;$
\end{enumerate}
see for instance \cite[Theorem 7.3.12]{BoyerGalicki}. In the setting above, if we have ${\text{Ric}}(g^{T}) = \lambda_{0}g^{T}$, for some real constant $\lambda_{0} > 0$, we can take $a > 0$, such that
\begin{equation}
a = \frac{\lambda_{0}}{2(n+1)},
\end{equation}
and define
\begin{equation}
g_{a} = ag + (a^{2} - a)\eta \otimes \eta, \ \ \eta' = a \eta, \ \ \xi' = \frac{1}{a}\xi,  \ \ \phi' = \phi.
\end{equation}
The change above is called $\mathscr{D}$-{\textit{homothetic transformation}} \cite{Tanno}, and the resulting structure tensors $(g_{a},\phi',\xi',\eta')$ also define a Sasaki structure on $Q$. Furthermore, it is straightforward to see that $g_{a}^{T} = ag^{T}$. Thus, since ${\text{Ric}}(ag^{T}) = {\text{Ric}}(g^{T})$, from the previous identity (1), we have 
\begin{center}
${\text{Ric}}_{g_{a}}(X,Y) = {\text{Ric}}_{g_{a}^{T}}(X,Y) - 2g_{a}(X,Y) = 2(n+1)g_{a}^{T}(X,Y) - 2g_{a}(X,Y) = 2ng_{a}(X,Y),$
\end{center}
$\forall X,Y \in \mathscr{D} = \ker{\eta}$, here, in the last equality on the right-hand side above, we have used that $g_{a}|_{\mathscr{D}} = g_{a}^{T}$. Since the previous identity (2) remains essentially the same, namely, ${\text{Ric}}_{g_{a}}(X,\xi') = 2n\eta'(X)$, $\forall X \in TQ$, we conclude that ${\text{Ric}}(g_{a}) = 2ng_{a}$. Thus, we have that the structure tensors $(g_{a},\phi',\xi',\eta')$ define a Sasaki-Einstein structure on $Q$. For more details, see \cite{BOYER}.
\end{remark}

Now we are able to prove the following result.

\begin{theorem}
\label{Theo2}
Let $(M,[g],D,J)$ be a compact homogeneous Hermitian-Einstein-Weyl manifold, such that $\dim_{\mathbbm{R}}(M) \geq 6$, and let $K$ be the compact connected Lie group which acts on $M$ by preserving the Hermitian-Einstein-Weyl structure. Suppose also that $K_{{\text{ss}}}$ is simply connected and has a unique simple component. Then, the Hermitian-Einstein-Weyl metric $g$ is completely determined by the Lee form $\theta_{g} \in \Omega^{1}(M)$, locally described by
\begin{equation}
\label{higgsfield1}
\theta_{g} = -\bigg [ \frac{\ell}{I(X_{P})}d\log \Big ( \big | \big |s_{U}v_{\delta_{P}}^{+} \big | \big |^{2}\Big ) +  \frac{\overline{w}dw + wd\overline{w}}{|w|^{2}} \bigg],
\end{equation}
for some $\ell \in \mathbbm{Z}_{>0}$, such that $s_{U} \colon U \subset X_{P} \to K_{\text{ss}}^{\mathbbm{C}}$ is some local section, $v_{\delta_{P}}^{+}$ is the highest weight vector of weight $\delta_{P}$ associated to the irreducible $\mathfrak{k}_{\text{ss}}^{\mathbbm{C}}$-module $V(\delta_{P}) = H^{0}(X_{P},K_{X_{P}}^{-1})^{\ast}$, and $I(X_{P})$ is the Fano index of $X_{P}$. 

\end{theorem}

\begin{proof}
Since $(M,[g],D,J)$ is a compact homogeneous l.c.K. manifold, from Theorem \ref{LCKisVaisman}, and Theorem \ref{Vaismanregular}, it follows that $M$ is a $T_{\mathbbm{C}}^{1}$-principal bundle over a complex flag manifold $X_{P} = K_{{\text{ss}}}^{\mathbbm{C}}/P$, for some parabolic Lie subgroup $P = P_{\Theta} \subset K_{{\text{ss}}}^{\mathbbm{C}}$. Moreover, it follows also from Theorem \ref{Vaismanregular} that $M$ is a flat $S^{1}$-principal bundle over a compact homogeneous Sasaki manifold $Q$. From Theorem \ref{HEW}, we have that $M = {\rm{Tot}}(\mathscr{O}_{X_{P}}(-\ell)^{\times})/\Gamma$, for some $\ell \in \mathbbm{Z}_{>0}$, which implies that ${\rm{e}}(Q) = - \frac{\ell}{I(X_{P})} c_{1}(X_{P})$. Hence, we have that $Q$ is a compact homogeneous Sasaki-Einstein manifold, see for instance \cite{BOYER}, \cite{Sparks}. In particular, we have $\pi_{1}(Q) = \mathbbm{Z}_{\ell}$, and $Q = \widetilde{Q}/\mathbbm{Z}_{\ell}$, where $\widetilde{Q} = Q(\mathscr{O}_{X_{P}}(-1))$ is the associated universal covering space. From this, let us suppose at first that $\pi_{1}(Q)$ is trivial ($\ell = 1$). In this case, we have $M = \widetilde{Q} \times S^{1}$, and the associated presentation for $M$ is given by
\begin{equation}
M = \big ({\rm{Tot}}(\mathscr{O}_{X_{P}}(-1)^{\times}),\mathbbm{Z} \big).
\end{equation}
Under the considerations above, it follows from  Theorem \ref{HEW} that the Hermitian-Einstein-Weyl structure on $M$ is obtained from the K\"{a}hler Ricci-flat structure on the cone ${\rm{Tot}}(\mathscr{O}_{X_{P}}(-1)^{\times})$. In order to explicitly describe this  Hermitian-Einstein-Weyl structure, we observe the following. Since in this case we have
\begin{center}
$\displaystyle \mathscr{O}_{X_{P}}(-1) = \bigotimes_{\alpha \in \Sigma \backslash \Theta}\mathscr{O}_{\alpha}(-1)^{\otimes \frac{\langle \delta_{P}, h_{\alpha}^{\vee}\rangle}{I(X_{P})}},$
\end{center}
we can consider the K\"{a}hler potential ${\rm{K}}_{H} \colon \displaystyle \mathscr{O}_{X_{P}}(-1)^{\times} \to \mathbbm{R}^{+}$, described locally by 
\begin{equation}
\label{Kpotencialcanonical}
\displaystyle {\rm{K}}_{H}\big (z,w \big ) = \Big ( \prod_{\alpha \in \Sigma \backslash \Theta}\big | \big |s_{i}(z)v_{\omega_{\alpha}}^{+} \big | \big |^{\frac{2\langle \delta_{P}, h_{\alpha}^{\vee}\rangle}{I(X_{P})}}\Big)w\overline{w} = \Big (\big | \big |s_{i}(z)v_{\delta_{P}}^{+} \big | \big |^{\frac{2}{I(X_{P})}} \Big)  w\overline{w},
\end{equation}
cf. Eq. (\ref{Kpotentialcone}). From the K\"{a}hler potential $r^{2} = {\rm{K}}_{H}$ above we obtain a K\"{a}hler form
\begin{center}
$\displaystyle \omega_{\mathscr{C}} = \frac{\sqrt{-1}}{2} \partial \overline{\partial} {\rm{K}}_{H} = d\Big(\frac{r^{2}\eta}{2}\Big) = {\mathrm{e}}^{2\psi} \bigg ( d\psi \wedge \eta + \frac{d\eta}{2}\bigg)$,
\end{center}
such that $\psi = \log(r)$, and
\begin{center}
$\displaystyle \eta = \frac{1}{2} d^{c}\log({\rm{K}}_{H}(z,w)) = \frac{1}{2I(X_{P})}d^{c}\log \Big( \big | \big |s_{i}(z)v_{\delta_{P}}^{+} \big | \big |^{2}\Big) + d\sigma_{i}$.
\end{center}
Notice that $d\eta = \pi^{\ast}\big(\frac{\rho_{0}}{I(X_{P})}\big)$, where $\pi \colon Q(\mathscr{O}_{X_{P}}(-1) ) \to X_{P}$ denotes the associated bundle projection, see Eq. (\ref{riccinorm}). From above, consider the Sasaki structure $(g_{S},\phi,\xi,\eta)$ on $Q(\mathscr{O}_{X_{P}}(-1) )$, obtained as in Eq. (\ref{sasakistructure}), where the Sasaki metric $g_{S}$ on $Q(\mathscr{O}_{X_{P}}(-1) )$ is given by
\begin{center}
$g_{S} = \frac{1}{2}d\eta(\text{id}\otimes \phi) + \eta \otimes \eta$.
\end{center}
Denoting by $g_{S}^{T}$ the (transversal) K\"{a}hler metric induced by $\frac{\rho_{0}}{2I(X_{P})}$ on $X_{P}$, it follows from Remark \ref{homothetic} that 
\begin{enumerate}

\item ${\text{Ric}}_{g_{S}}(X,Y) = {\text{Ric}}_{g_{S}^{T}}(X,Y) - 2g_{S}(X,Y)$, $\forall X,Y \in \mathscr{D} \cong TX_{P}$,

\item ${\text{Ric}}_{g_{S}}(X,\xi) = 2\dim_{\mathbbm{C}}(X_{P})\eta(X)$, $\forall X \in TQ(\mathscr{O}_{X_{P}}(-1) ).$
\end{enumerate}
Therefore, since ${\text{Ric}}(g_{S}^{T}) = 2I(X_{P})g_{S}^{T}$, see Eq. (\ref{riccinorm}), by considering the $\mathscr{D}$-homothetic transformation defined by
\begin{equation}
\label{rescaleconst}
a = \frac{I(X_{P})}{\dim_{\mathbbm{C}}(X_{P}) + 1},
\end{equation}
we obtain a Sasaki structure $(g_{a},\phi,\frac{1}{a}\xi,a\eta)$ on $Q(\mathscr{O}_{X_{P}}(-1) )$, with rescaled Riemannian metric $g_{a}$ given by
\begin{equation}
\label{metricsasakieinstein}
g_{a} = \displaystyle \frac{I(X_{P})}{\dim_{\mathbbm{C}}(X_{P})+1} \bigg ( \frac{1}{2}d \eta ({\rm{id}} \otimes \phi)  + \frac{I(X_{P})}{\dim_{\mathbbm{C}}(X_{P})+1}\eta \otimes \eta \bigg ),
\end{equation}
cf. \cite[Theorem 2]{CONTACTCORREA}. The metric above satisfies ${\text{Ric}}(g_{a}) = 2\dim_{\mathbbm{C}}(X_{P})g_{a}$, see Remark \ref{homothetic}, so it defines the appropriate Sasaki-Einstein structure to be considered. From the Sasaki-Einstein structure $(g_{a},\phi,\frac{1}{a}\xi,a\eta)$ described above, we can consider the Ricci-flat metric on ${\rm{Tot}}(\mathscr{O}_{X_{P}}(-1)^{\times})$ defined by
\begin{equation}
g_{CY} = r^{2}g_{a} + dr\otimes dr,
\end{equation}
see \cite[Corollary 11.1.8]{BoyerGalicki}, such that $r^{2} = {\rm{K}}_{H}$, with $ {\rm{K}}_{H}$ defined as in Eq. (\ref{Kpotencialcanonical}). 
From the $\mathscr{D}$-homothetic transformation induced by the scalar defined in Eq. (\ref{rescaleconst}), the complex structure $\mathscr{J} \in {\text{End}}(T({\rm{Tot}}(\mathscr{O}_{X_{P}}(-1)^{\times})))$ becomes
\begin{equation}
\label{complexchange}
\mathscr{J}(Y)= \phi(Y) - a\eta(Y)r\partial_{r}, \ \ \ \ \ \mathscr{J}( r\partial_{r}) = \frac{1}{a}\xi.
\end{equation}
Thus, considering $\psi = \log(r)$, we obtain $g_{CY} = {\rm{e}}^{2\psi}\big(g_{a} + d\psi\otimes d\psi\big)$, and
\begin{equation}
\omega_{CY} = g_{CY}(\mathscr{J}\otimes {\rm{id}}) = a \Big ( rdr \wedge \eta + \frac{r^{2}}{2}d\eta \Big) =  a{\rm{e}}^{2\psi}\Big ( d\psi \wedge \eta + \frac{d\eta}{2}\Big).
\end{equation}
From above, we obtain a globally conformally K\"{a}hler structure $(\widetilde{\Omega}, \mathscr{J},\widetilde{\theta})$ on ${\rm{Tot}}(\mathscr{O}_{X_{P}}(-1))$, such that 
\begin{equation}
\displaystyle \widetilde{\Omega} = {\rm{e}}^{-2\psi}\omega_{CY} = a \Big (d\psi \wedge \eta + \frac{d\eta}{2} \Big),
\end{equation}
notice that $d\widetilde{\Omega} = (-2d\psi) \wedge \widetilde{\Omega}$, so we have $\widetilde{\theta}  = -2d\psi$. The globally conformally K\"{a}hler structure $(\widetilde{\Omega}, \mathscr{J},\widetilde{\theta})$ descends to a Hermitian-Einstein-Weyl structure $(\Omega,J,\theta)$ on $M = {\rm{Tot}}(\mathscr{O}_{X_{P}}(-1)^{\times})/\mathbbm{Z}$, satisfying
\begin{equation}
\label{VaismanRicciflat}
\displaystyle \wp^{\ast}\Omega =  a \Big (d\psi \wedge \eta + \frac{d\eta}{2} \Big), \ \ J \circ \wp_{\ast} = \wp_{\ast} \circ \mathscr{J}, \ \ {\text{and}} \ \ \wp^{\ast}\theta = -2d\psi,
\end{equation}
such that $\wp \colon {\rm{Tot}}(\mathscr{O}_{X_{P}}(-1)^{\times}) \to M$ is the associated projection map. By denoting $\theta = \theta_{g}$, where $g = \Omega({\rm{id}}\otimes J)$, we have locally
\begin{equation}
\label{Leericciflat}
\theta_{g} = -\bigg [ \frac{1}{I(X_{P})}d\log \Big ( \big | \big |s_{U}v_{\delta_{P}}^{+} \big | \big |^{2}\Big ) +  \frac{\overline{w}dw + wd\overline{w}}{|w|^{2}} \bigg],
\end{equation}
for some local section $s_{U} \colon U \subset X_{P} \to K_{\text{ss}}^{\mathbbm{C}}$. It is straightforward to see that $\theta$ completely defines $(\Omega,J,\theta)$, see for instance Section \ref{mainremark}.

In order to conclude, we consider the case that $\pi_{1}(Q) = \mathbbm{Z}_{\ell}$, i.e., $Q = \widetilde{Q}/\mathbbm{Z}_{\ell}$, and $M = \widetilde{Q} \times_{\mathbbm{Z}_{\ell}}S^{1}$. In this case, the associated (minimal) presentation is given by $M = \big ({\rm{Tot}}(\mathscr{O}_{X_{P}}(-\ell)^{\times}), \mathbbm{Z} \big)$. The description for the homogeneous Hermitian-Einstein-Weyl structure on $M$ can be obtained exactly as in the previous case. In fact, we just need to consider the K\"{a}hler potential ${\rm{K}}_{H} \colon {\rm{Tot}}(\mathscr{O}_{X_{P}}(-\ell)^{\times}) \to \mathbbm{R}$, locally described by
\begin{equation}
\displaystyle {\rm{K}}_{H}\big (z,w \big ) = \Big (\big | \big |s_{U}(z)v_{\delta_{P}}^{+} \big | \big |^{\frac{2\ell}{I(X_{P})}} \Big)w\overline{w},
\end{equation}
for some local section $s_{U} \colon U \subset X_{P} \to K_{\text{ss}}^{\mathbbm{C}}$, and proceed as in Section \ref{mainremark}. 
\end{proof}

\section{Examples of Vaisman structures and Hermitian-Einstein-Weyl metrics}
In this section we provide several examples which illustrate the results presented in the previous theorems. In what follows, we shall use the notations and conventions introduced in Section \ref{subsec3.1}. Also, in order to perform some local computations it will be useful to consider the analytic cellular decomposition of complex flag manifolds by means Schubert cells. Being more precise, we shall consider the open set defined by the ``opposite" big cell in $X_{P}$. This open set is a distinguished coordinate neighbourhood $U \subset X_{P}$ of $x_{0} = eP \in X_{P}$ defined by the maximal Schubert cell. A brief description for the opposite big cell can be done as follows: Let $\Pi = \Pi^{+} \cup \Pi^{-}$ be the root system associated to a simple root system $\Sigma \subset \mathfrak{h}^{\ast}$, from this we can define the opposite big cell $U \subset X_{P}$ by

\begin{center}

 $U =  B^{-}x_{0} = R_{u}(P_{\Theta})^{-}x_{0} \subset X_{P}$,  

\end{center}
 where $B^{-} = \exp(\mathfrak{h} \oplus \mathfrak{n}^{-})$, and
 
 \begin{center}
 
 $R_{u}(P_{\Theta})^{-} = \displaystyle \prod_{\alpha \in \Pi^{-} \backslash \langle \Theta \rangle^{-}}N_{\alpha}^{-}$, \ \ (opposite unipotent radical)
 
 \end{center}
with $N_{\alpha}^{-} = \exp(\mathfrak{g}_{\alpha})$, $\forall \alpha \in \Pi^{-} \backslash \langle \Theta \rangle^{-}$. It is worth mentioning that the opposite big cell defines a contractible open dense subset in $X_{P}$, thus the restriction of any vector bundle over this open set is trivial. For further results about Schubert cells and Schubert varieties, we suggest \cite{MONOMIAL}.

\begin{example}[Basic model] 
\label{basicmodel}
Consider $G^{\mathbbm{C}}$ as being a simple Lie group, and take $\Theta = \Sigma \backslash \{\alpha\}$, for some fixed $\alpha \in \Sigma$. Let us denote by $P_{\Theta} = P_{\omega_{\alpha}}$, such that $\omega_{\alpha} \in \Lambda_{\mathbbm{Z}_{\geq 0}}^{\ast}$, the parabolic Lie subgroup associated to $\Theta \subset \Sigma$. From this, we can consider the flag manifold 
\begin{equation}
X_{P_{\omega_{\alpha}}} = G^{\mathbbm{C}}/P_{\omega_{\alpha}}. 
\end{equation}
Applying Proposition \ref{C8S8.2Sub8.2.3P8.2.6}, it follows that ${\text{Pic}}(X_{P_{\omega_{\alpha}}}) = \mathbbm{Z}c_{1}(\mathscr{O}_{\alpha}(1))$, and a straightforward computation shows that 
\begin{equation} 
\label{maximalparabolic}
I(X_{P_{\omega_{\alpha}}}) = \langle \delta_{P_{\omega_{\alpha}}},h_{\alpha}^{\vee} \rangle, \ \ \  {\text{and}}  \  \ \ K_{X_{P_{\omega_{\alpha}}}}^{ \otimes \frac{1}{ \langle \delta_{P_{\omega_{\alpha}}},h_{\alpha}^{\vee} \rangle}} = \mathscr{O}_{\alpha}(-1).
\end{equation}
Hence, given a negative line bundle $L \in {\text{Pic}}(X_{P_{\omega_{\alpha}}})$, it follows that 
\begin{center}
$L = \mathscr{O}_{\alpha}(-\ell)$,
\end{center}
for some integer $\ell \in \mathbbm{Z}$, such that $\ell>0$. As we have seen, we can associate to $L$ an irreducible $\mathfrak{g}^{\mathbbm{C}}$-module $V(\mu(L))$ with highest weight vector $v_{\mu(L)}^{+} \in V(\mu(L))$, such that
\begin{center}
$\displaystyle V(\mu(L)) \subset V(\omega_{\alpha})^{\otimes \ell}$, \ \ \ {\text{and}} \ \ \ $\displaystyle v_{\mu(L)}^{+} = \underbrace{v_{\omega_{\alpha}}^{+} \otimes \cdots \otimes v_{\omega_{\alpha}}^{+}}_{\ell{\text{-times}}}$,
\end{center}
notice that, in this case,  $\mu(L) = \ell\omega_{\alpha}$, see for instance Eq. (\ref{negativeweight}).
From the above data we can consider the manifold
\begin{equation}
M = {\rm{Tot}}(\mathscr{O}_{\alpha}(-\ell)^{\times})/\Gamma, \ \ \ {\text{where}} \ \ \ \Gamma = \big \{ \lambda^{n} \in \mathbbm{C}^{\times} \ \big | \ n \in \mathbbm{Z}  \big \},
\end{equation}
for some $\lambda \in \mathbbm{C}^{\times}$, such that $|\lambda| <1$. Applying Theorem \ref{Theo1} we obtain a (Vaisman) l.c.K. structure $(\Omega, J, \theta)$ on $M$ completely determined by a K\"{a}hler potential ${\rm{K}}_{H} \colon {\rm{Tot}}(L^{\times}) \to \mathbbm{R}^{+}$, which can be described in coordinates $(z,w) \in  \mathscr{O}_{\alpha}(-\ell)^{\times}|_{U}$ as
\begin{equation}
\label{potentialmaxparabolic}
{\rm{K}}_{H}\big (z,w \big ) = \Big (\big | \big |s_{U}(z)v_{\omega_{\alpha}}^{+} \big | \big |^{2\ell} \Big)  w\overline{w},
\end{equation}
for some local section $s_{U} \colon U \subset X_{P_{\omega_{\alpha}}} \to G^{\mathbbm{C}}$. By means of the K\"{a}hler potential above we can describe the l.c.K. structure $(\Omega, J, \theta)$ on $M$ from the Lee form
\begin{equation}
 \displaystyle \theta = -\ell\bigg [d\log \Big ( \big | \big |s_{U}v_{\omega_{\alpha}}^{+} \big | \big |^{2}\Big ) + \frac{1}{\ell}\frac{\overline{w}dw + wd\overline{w}}{|w|^{2}}\bigg],
\end{equation}
recall that $\vartheta = -\theta \circ J$, and $\Omega = -\frac{1}{4}(d\vartheta - \theta \wedge \vartheta)$, see for instance Section \ref{mainremark}. In particular, if we consider $\ell = 1$, it follows that $L = \mathscr{O}_{\alpha}(-1)$, i.e., $L$ is the maximal root of the canonical bundle of $X_{P_{\omega_{\alpha}}}$, see Eq. (\ref{maximalparabolic}). In this particular case, by applying Theorem \ref{Theo2} on the compact Homogeneous Hermitian-Einstein-Weyl manifold of the form $M = {\rm{Tot}}(\mathscr{O}_{\alpha}(-1)^{\times})/\mathbbm{Z}$, such that $K_{\text{ss}} = G$, we have the unique Hermitian-Einstein-Weyl structure $(\Omega,J,\theta_{g})$ on $M$ completely determined by
\begin{equation}
\theta_{g} = -\frac{1}{\langle \delta_{P_{\omega_{\alpha}}},h_{\alpha}^{\vee} \rangle}\Bigg [d\log \Big ( \big | \big |s_{U}v_{ \delta_{P_{\omega_{\alpha}}}}^{+} \big | \big |^{2}\Big ) + \langle \delta_{P_{\omega_{\alpha}}},h_{\alpha}^{\vee} \rangle\frac{\overline{w}dw + wd\overline{w}}{|w|^{2}}\Bigg],
\end{equation}
Notice that, in this last setting, the complex structure $J \in {\text{End}}(TM)$ is obtained from a complex structure on ${\rm{Tot}}(\mathscr{O}_{\alpha}(-1)^{\times})$ after a suitable $\mathscr{D}$-homothetic transformation induced from
\begin{equation}
a = \frac{\langle \delta_{P_{\omega_{\alpha}}},h_{\alpha}^{\vee} \rangle}{\dim_{\mathbbm{C}}(X_{P_{\omega_{\alpha}}}) + 1},
\end{equation}
on the Sasaki structure defined on the unitary frame bundle of $\mathscr{O}_{\alpha}(-1)$, see for instance Eq. (\ref{complexchange}) and Remark \ref{homothetic}. 
\end{example}

The ideas above provide a constructive and systematic method to describe examples of Vaisman manifolds and homogeneous Hermitian-Einstein-Weyl manifolds associated to complex flag manifolds with Picard number equal to one. In what follows we shall further explore the application of the previous ideas in a class of concrete examples provided by complex Grassmannians.

\begin{example}[Complex Grassmannian] 
\label{Grassmanianexample}
Considering $G^{\mathbbm{C}} = {\rm{SL}}(n+1,\mathbbm{C})$ and fixing the Cartan subalgebra $\mathfrak{h} \subset \mathfrak{sl}(n+1,\mathbbm{C})$ given by diagonal matrices whose the trace is equal to zero, we have the set of simple roots given by
$$\Sigma = \Big \{ \alpha_{l} = \epsilon_{l} - \epsilon_{l+1} \ \Big | \ l = 1, \ldots,n\Big\},$$
here we consider $\epsilon_{l} \colon {\text{diag}}\{a_{1},\ldots,a_{n+1} \} \mapsto a_{l}$, $ \forall l = 1, \ldots,n+1$. Therefore, the set of positive roots is given by
$$\Pi^+ = \Big \{ \alpha_{ij} = \epsilon_{i} - \epsilon_{j} \ \Big | \ i<j  \Big\}, $$
notice that $\alpha_{l} = \alpha_{ll+1}$, $\forall l = 1, \ldots,n$. Considering $\Theta = \Sigma \backslash \{\alpha_{k}\}$ and $P = P_{\omega_{\alpha_{k}}}$, we have the complex Grassmannian manifold

\begin{center}

${\rm{Gr}}(k,\mathbbm{C}^{n+1}) = {\rm{SL}}(n+1,\mathbbm{C})/P_{\omega_{\alpha_{k}}}.$

\end{center}
A straightforward computation shows that ${\text{Pic}}({\rm{Gr}}(k,\mathbbm{C}^{n+1})) = \mathbbm{Z}c_{1}(\mathscr{O}_{\alpha_{k}}(1))$, and we can also show that
\begin{equation}
\label{fanograssmannian}
\big \langle \delta_{P_{\omega_{\alpha_{k}}}},h_{\alpha_{k}}^{\vee} \big \rangle = n+1 \Longrightarrow K_{{\rm{Gr}}(k,\mathbbm{C}^{n+1})}^{\otimes \frac{1}{n+1}} = \mathscr{O}_{\alpha_{k}}(-1).  
\end{equation}
Hence, given a negative line bundle $L \in {\text{Pic}}({\rm{Gr}}(k,\mathbbm{C}^{n+1}))$, it follows that $L = \mathscr{O}_{\alpha_{k}}(-\ell)$, for some integer $\ell >0$. From the above data and the previous example, we can consider the manifold defined by
\begin{center}
$M = {\rm{Tot}}(\mathscr{O}_{\alpha_{k}}(-\ell)^{\times})/\Gamma, \ \ \ {\text{where}}$ \ \ \ $\Gamma = \big \{ \lambda^{n} \in \mathbbm{C}^{\times} \ \big | \ n \in \mathbbm{Z}  \big \}$,
\end{center}
for some $\lambda \in \mathbbm{C}^{\times}$, such that $|\lambda| <1$. Applying Theorem \ref{Theo1} we obtain a (Vaisman) l.c.K. structure $(\Omega, J, \theta)$ on $M$ completely determined by a K\"{a}hler potential ${\rm{K}}_{H} \colon {\rm{Tot}}(L^{\times}) \to \mathbbm{R}^{+}$, which can be described in local coordinates $(z,w) \in  \mathscr{O}_{\alpha_{k}}(-\ell)^{\times}|_{U}$ as
\begin{center}
${\rm{K}}_{H}\big (z,w \big ) = \Big (\big | \big |s_{U}(z)v_{\omega_{\alpha_{k}}}^{+} \big | \big |^{2\ell} \Big)  w\overline{w}.$
\end{center}
In order to provide a concrete description for the above K\"{a}hler potential, we proceed as follows: Since we have 
\begin{center}
$V(\omega_{\alpha_{k}}) = \bigwedge^{k}(\mathbbm{C}^{n+1}),  \ \ \ {\text{and}} \ \ \  v_{\omega_{\alpha_{k}}}^{+} = e_{1} \wedge \ldots \wedge e_{k}$,
\end{center}
we fix the canonical basis $e_{i_{1}}\wedge \cdots \wedge e_{i_{k}}$, $\{i_{1} < \ldots < i_{k}\} \subset \{1, \ldots, n+1\}$, for $V(\omega_{\alpha_{k}}) = \bigwedge^{k}(\mathbbm{C}^{n+1})$. By taking the coordinate neighborhood defined by the opposite big cell $U =  R_{u}(P_{\omega_{\alpha_{k}}})^{-}x_{0} \subset {\rm{Gr}}(k,\mathbbm{C}^{n+1})$, from the parameterization

\begin{center}

$Z \in \mathbbm{C}^{(n+1-k)k} \mapsto n(Z)x_{0} = \begin{pmatrix}
 \ 1_{k} & 0_{k,n+1-k} \\
 Z & 1_{n+1-k}
\end{pmatrix}x_{0},$

\end{center}
here we have $Z = (z_{ij}) \in \mathbbm{C}^{(n+1-k)k} \cong {\rm{M}}_{n+1-k,k}(\mathbbm{C})$, we can take the local section $s_{U} \colon U \subset {\rm{Gr}}(k,\mathbbm{C}^{n+1})\to {\rm{SL}}(n+1,\mathbbm{C})$ defined by
\begin{center}
$s_{U}(n(Z)x_{0}) = n(Z) = \begin{pmatrix}
 \ 1_{k} & 0_{k,n+1-k} \\
 Z & 1_{n+1-k}
\end{pmatrix}.$
\end{center}
The local section above allows us to write locally
\begin{equation}
\label{potentialgrassman}
{\rm{K}}_{H}\big (Z,w \big ) = \Bigg (\sum_{I} \bigg | \det_{I} \begin{pmatrix}
 \ 1_{k} \\
 Z 
\end{pmatrix} \bigg |^{2} \Bigg )^{\ell}w\overline{w},
\end{equation}
where the sum above is taken over all $k \times k$ submatrices whose the lines are labeled by the set of index $I = \{i_{1} < \ldots < i_{k}\} \subset \{1, \ldots, n+1\}$. By means of the K\"{a}hler potential above we can describe the l.c.K. structure $(\Omega, J, \theta)$ on $M$ from the Lee form
\begin{equation}
\displaystyle \theta = -\ell\Bigg [d\log  \Bigg (\sum_{I} \bigg | \det_{I} \begin{pmatrix}
 \ 1_{k} \\
 Z 
\end{pmatrix} \bigg |^{2} \Bigg ) + \frac{1}{\ell}\frac{\overline{w}dw + wd\overline{w}}{|w|^{2}}\Bigg].
\end{equation} 
From Eq. (\ref{fanograssmannian}), and Theorem \ref{Theo2}, for the particular case that $\ell = 1$, if we consider the compact homogeneous Hermitian-Einstein-Weyl manifold of the form $M = {\rm{Tot}}(\mathscr{O}_{\alpha_{k}}(-1)^{\times})/\mathbbm{Z}$, such that $K_{\text{ss}} = G = {\rm{SU}}(n+1)$, the unique homogeneous Hermitian-Einstein-Weyl structure $(\Omega,J,\theta_{g})$ on $M$ is completely determined by the Lee form
\begin{equation}
\theta_{g} = -\Bigg [d\log \Bigg (\sum_{I} \bigg | \det_{I} \begin{pmatrix}
 \ 1_{k} \\
 Z 
\end{pmatrix} \bigg |^{2} \Bigg ) + \frac{\overline{w}dw + wd\overline{w}}{|w|^{2}}\Bigg].
\end{equation}
A particular low dimensional example of the construction above is provided by ${\rm{Gr}}(2,\mathbbm{C}^{4}) = {\rm{SL}}(4,\mathbbm{C})/P_{\omega_{\alpha_{2}}}$ (Klein quadric). In this case we have
\begin{center}
$V(\omega_{\alpha_{2}}) = \bigwedge^{2}(\mathbbm{C}^{4})$ \ \ and \ \  $v_{\omega_{\alpha_{2}}}^{+} =  e_{1} \wedge e_{2}$, 
\end{center}
here we fix the basis $\{e_{i} \wedge e_{j}\}_{i<j}$ for $V(\omega_{\alpha_{2}}) = \bigwedge^{2}(\mathbbm{C}^{4})$. Similarly to the previous computations, we consider the open set defined by the opposite big cell $U = B^{-}x_{0} \subset {\rm{Gr}}(2,\mathbbm{C}^{4})$. This open set is parameterized by local coordinates $z \mapsto n(z)x_{0} \in U$, given by

\begin{center}
$z = (z_{1},z_{2},z_{3},z_{4}) \in \mathbbm{C}^{4} \mapsto\begin{pmatrix}
1 & 0 & 0 & 0\\
0 & 1 & 0 & 0 \\                  
z_{1}  & z_{3} & 1 & 0 \\
z_{2}  & z_{4} & 0 & 1
 \end{pmatrix} x_{0} \in U = B^{-}x_{0}.$
\end{center}
Given a negative line bundle $L \in {\text{Pic}}({\rm{Gr}}(2,\mathbbm{C}^{4}))$, we have $L = \mathscr{O}_{\alpha_{2}}(-\ell)$, for some integer $\ell >0$. From the above data and the previous ideas we can consider the manifold
\begin{center}
$M = {\rm{Tot}}(\mathscr{O}_{\alpha_{2}}(-\ell)^{\times})/\Gamma, \ \ \ {\text{where}}$ \ \ \ $\Gamma = \big \{ \lambda^{n} \in \mathbbm{C}^{\times} \ \big | \ n \in \mathbbm{Z}  \big \}$,
\end{center}
for some $\lambda \in \mathbbm{C}^{\times}$, such that $|\lambda| <1$. Applying Theorem \ref{Theo1} we obtain a (Vaisman) l.c.K. structure $(\Omega, J, \theta)$ on $M$ completely determined by a K\"{a}hler potential ${\rm{K}}_{H} \colon {\rm{Tot}}(\mathscr{O}_{\alpha_{2}}(-\ell)^{\times}) \to \mathbbm{R}^{+}$, which in turn can be described in local coordinates $(z,w) \in  \mathscr{O}_{\alpha_{2}}(-\ell)^{\times}|_{U}$ as
\begin{center}
${\rm{K}}_{H}\big (z,w \big ) = \Big (\big | \big |s_{U}(z)v_{\omega_{\alpha_{2}}}^{+} \big | \big |^{2\ell} \Big)  w\overline{w}.$
\end{center}
Thus, by considering the local section $s_{U} \colon U \subset {\rm{Gr}}(2,\mathbbm{C}^{4})\to {\rm{SL}}(4,\mathbbm{C})$, such that $s_{U}(n(z)x_{0}) = n(z)$, we obtain the concrete expression 
\begin{equation}
\label{C8S8.3Sub8.3.2Eq8.3.21}
{\rm{K}}_{H}\big (z,w \big ) =  \displaystyle  \Bigg (1+ \sum_{k = 1}^{4}|z_{k}|^{2} + \bigg |\det \begin{pmatrix}
 z_{1} & z_{3} \\
 z_{2} & z_{4}
\end{pmatrix} \bigg |^{2} \Bigg)^{\ell}w\overline{w}.
\end{equation}
By means of the K\"{a}hler potential above we can describe the l.c.K. structure $(\Omega, J, \theta)$ on $M$ from the Lee form
\begin{equation}
\displaystyle \theta = -\ell\Bigg [d\log  \Bigg (1+ \sum_{k = 1}^{4}|z_{k}|^{2} + \bigg |\det \begin{pmatrix}
 z_{1} & z_{3} \\
 z_{2} & z_{4}
\end{pmatrix} \bigg |^{2} \Bigg) + \frac{1}{\ell}\frac{\overline{w}dw + wd\overline{w}}{|w|^{2}}\Bigg].
\end{equation} 
It is worthwhile to observe that in this case we have the Fano index of ${\rm{Gr}}(2,\mathbbm{C}^{4})$ given by $I({\rm{Gr}}(2,\mathbbm{C}^{4})) = 4$, thus we obtain
\begin{center}   
$K_{{\rm{Gr}}(2,\mathbbm{C}^{4})}^{\otimes \frac{1}{4}} = \mathscr{O}_{\alpha_{2}}(-1).$        
\end{center}
As before, from Theorem \ref{Theo2}, for the particular case $\ell = 1$, for the compact homogeneous Hermitian-Einstein-Weyl manifold of the form $M = {\rm{Tot}}(\mathscr{O}_{\alpha_{k}}(-1)^{\times})/\mathbbm{Z}$, such that $K_{\text{ss}} = G = {\rm{SU}}(4)$, the unique homogeneous Hermitian-Einstein-Weyl structure $(\Omega,J,\theta_{g})$ on $M$ is completely determined by the Lee form
\begin{equation}
\theta_{g} = -\Bigg [d\log  \Bigg (1+ \sum_{k = 1}^{4}|z_{k}|^{2} + \bigg |\det \begin{pmatrix}
 z_{1} & z_{3} \\
 z_{2} & z_{4}
\end{pmatrix} \bigg |^{2} \Bigg) + \frac{\overline{w}dw + wd\overline{w}}{|w|^{2}}\Bigg].
\end{equation}
In this particular case the compact simply connected Sasaki manifold defined by the sphere bundle of $\mathscr{O}_{\alpha_{2}}(-1)$ is given by the Stiefel manifold, namely, the underlying homogeneous Boothby-Wang fibration is given by the Tits fibration 
\begin{center}
$S^{1} \hookrightarrow \mathscr{V}_{2}(\mathbbm{R}^{6}) \to {\rm{Gr}}(2,\mathbbm{C}^{4}),$
\end{center}
such that $\mathscr{V}_{2}(\mathbbm{R}^{6})$ is the Stiefel manifold. Thus, we obtain an explicit description for the (unique) homogeneous Hermitian-Einstein-Weyl structure on $M = \mathscr{V}_{2}(\mathbbm{R}^{6}) \times S^{1}$.

\end{example}

\begin{example}[Hopf manifold]
By considering $\mathbbm{C}{\rm{P}}^{m} = {\rm{Gr}}(1,\mathbbm{C}^{m+1})$, from Example \ref{Grassmanianexample} we obtain

\begin{center}

$\mathbbm{C}{\rm{P}}^{m} = {\rm{SL}}(m+1,\mathbbm{C})/P_{\omega_{\alpha_{1}}}.$

\end{center}
Thus, it follows that
\begin{equation}
{\text{Pic}}(\mathbbm{C}{\rm{P}}^{m}) = \mathbbm{Z}c_{1}\big(\mathscr{O}_{\alpha_{1}}(1)\big) \Longrightarrow K_{\mathbbm{C}{\rm{P}}^{m}}^{\otimes \frac{1}{m+1}} = \mathscr{O}_{\alpha_{1}}(-1).  
\end{equation}
Since in this case we have $V(\omega_{\alpha_{1}}) = \mathbbm{C}^{m+1}$ and $v_{\omega_{\alpha_{1}}}^{+} = e_{1}$, if we take the open set defined by the opposite big cell $U =  R_{u}(P_{\omega_{\alpha_{1}}})^{-}x_{0} \subset \mathbbm{C}{\rm{P}}^{m}$, where $x_0=eP_{\omega_{\alpha_{1}}}$ (trivial coset), we have that $U =  R_{u}(P_{\Theta})^{-}x_{0}$ can be parameterized by
\begin{center}
$(z_{1},\ldots,z_{m}) \in \mathbbm{C}^{m} \mapsto \begin{pmatrix}
1 & 0 &\cdots & 0 \\
z_{1} & 1  &\cdots & 0 \\                  
\ \vdots  & \vdots &\ddots  & \vdots  \\
z_{m} & 0 & \cdots &1 
 \end{pmatrix}x_{0} \in U =  R_{u}(P_{\omega_{\alpha_{1}}})^{-}x_{0}$.
\end{center}
For the sake of simplicity, we denote the above matrix by $n(z) \in {\rm{SL}}(m+1,\mathbbm{C})$. From this, we can take a local section $s_{U} \colon U \subset \mathbbm{C}{\rm{P}}^{m} \to {\rm{SL}}(m+1,\mathbbm{C})$, defined by
$$s_{U}(n(z)x_{0}) = n(z) \in {\rm{SL}}(m+1,\mathbbm{C}),$$ 
such that $z = (z_{1},\ldots,z_{m}) \in \mathbbm{C}^{m}$. Thus, by considering a negative line bundle $L \in {\text{Pic}}(\mathbbm{C}{\rm{P}}^{m})$, since $L = \mathscr{O}_{\alpha_{1}}(-\ell)$, for some integer $\ell > 0$, we have ${\rm{K}}_{H} \colon {\rm{Tot}}(\mathscr{O}_{\alpha_{1}}(-\ell)^{\times}) \to \mathbbm{R}^{+}$, which is described in local coordinates $(z,w) \in  \mathscr{O}_{\alpha_{1}}(-\ell)^{\times}|_{U}$ as
\begin{equation}
\label{potentialtautological}
\displaystyle {\rm{K}}_{H}(z,w) = \bigg (1+ \sum_{k = 1}^{m}|z_{k}|^{2} \bigg )^{\ell}w\overline{w},
\end{equation}
see for instance Eq. (\ref{potentialgrassman}). Therefore, by considering the manifold
\begin{center}
$M = {\rm{Tot}}(\mathscr{O}_{\alpha_{1}}(-\ell)^{\times})/\Gamma, \ \ \ {\text{where}}$ \ \ \ $\Gamma = \big \{ \lambda^{n} \in \mathbbm{C}^{\times} \ \big | \ n \in \mathbbm{Z}  \big \}$,
\end{center}
for some $\lambda \in \mathbbm{C}^{\times}$, such that $|\lambda| <1$, we can describe the (Vaisman) l.c.K. structure $(\Omega, J, \theta)$ on $M$ from the Lee form
\begin{equation}
\displaystyle \theta = -\ell\Bigg [d\log \bigg (1+ \sum_{k = 1}^{m}|z_{k}|^{2} \bigg )+ \frac{1}{\ell}\frac{\overline{w}dw + wd\overline{w}}{|w|^{2}}\Bigg] =  - \ell \sum_{i = 1}^{m}\frac{z_{i}d\overline{z}_{i} +\overline{z}_{i}dz_{i}}{\big (1+ \sum_{k = 1}^{m}|z_{k}|^{2} \big)} - \frac{\overline{w}dw + wd\overline{w}}{|w|^{2}}.
\end{equation}
Similarly to the previous examples, from Theorem \ref{Theo2}, for the particular case $\ell = 1$, if we consider the compact homogeneous Hermitian-Einstein-Weyl manifold of the form $M = {\rm{Tot}}(\mathscr{O}_{\alpha_{1}}(-1)^{\times})/\mathbbm{Z}$, such that $K_{\text{ss}} = G = {\rm{SU}}(m+1)$, it follows that $M = {\rm{Tot}}(\mathscr{O}_{\alpha_{1}}(-1)^{\times})/\mathbbm{Z} = S^{2m+1} \times S^{1}$. Notice that the associated underlying homogeneous Boothby-Wang fibration defined by the sphere bundle of $\mathscr{O}_{\alpha_{1}}(-1)$ is given in this case by the complex Hopf fibration 
\begin{center}
$S^{1} \hookrightarrow S^{2m+1} \to \mathbbm{C}{\rm{P}}^{m}.$
\end{center}
Further, the unique homogeneous Hermitian-Einstein-Weyl structure $(\Omega,J,\theta_{g})$ on $M$ is completely determined by the Lee form
\begin{equation}
\displaystyle \theta_{g} =  - \sum_{i = 1}^{m}\frac{z_{i}d\overline{z}_{i} +\overline{z}_{i}dz_{i}}{\big (1+ \sum_{k = 1}^{m}|z_{k}|^{2} \big)} - \frac{\overline{w}dw + wd\overline{w}}{|w|^{2}}.
\end{equation}
From above we obtain a complete description for the unique homogeneous Hermitian-Einstein-Weyl structure on the Hopf manifold $S^{2m+1} \times S^{1}$ by means of elements of Lie theory.
\end{example}

\begin{remark}[Lens space] It is worth mentioning that the previous ideas used in the description of the Vaisman structure on $M = S^{2m+1} \times S^{1}$ also hold for any manifold of the form 
\begin{center}
$M = \mathbbm{L}(m;\ell) \times S^{1},$
\end{center}
where $\mathbbm{L}(m;\ell) = S^{2m+1}/\mathbbm{Z}_{\ell}$ (Lens space), $\forall \ell \in \mathbbm{Z}_{>0}$. Actually, by considering $\mathscr{O}_{\alpha_{1}}(-\ell) \to \mathbbm{C}{\rm{P}}^{m}$, it is not difficult to see that the underlying homogeneous Boothby-Wang fibration defined by the sphere bundle of $\mathscr{O}_{\alpha_{1}}(-\ell)$ is given by the complex Hopf fibration
\begin{center}
$S^{1} \hookrightarrow \mathbbm{L}(m;\ell) \to \mathbbm{C}{\rm{P}}^{m}.$
\end{center}
From above, one can equip $M = \mathbbm{L}(m;\ell) \times S^{1}$ with a (Vaisman) l.c.K. structure naturally induced from the cone $\mathbbm{L}(m;\ell) \times \mathbbm{R}^{+} \cong {\text{Tot}}(\mathscr{O}_{\alpha_{1}}(-\ell)^{\times})$ through the K\"{a}hler potential described in Eq. (\ref{potentialtautological}). 
\end{remark}

\begin{example}[Elliptic bundles over full flag manifolds] 
\label{examplefullflag}
Recall that a full flag manifold is defined as the homogeneous space given by $G/T$, where $G$ is a compact simple Lie group, and $T \subset G$ is a maximal torus. For the sake of simplicity, let us suppose also that $G$ is simply connected. Considering the root system $\Pi = \Pi^{+} \cup \Pi^{-}$ associated to the pair $(G,T)$ \cite{Knapp}, from the complexification $G^{\mathbbm{C}}$ of $G$ we have an identification
\begin{equation}
G/T \cong G^{\mathbbm{C}}/B,
\end{equation}
where $B \subset G^{\mathbbm{C}}$ is a Borel subgroup such that $B \cap G = T$. From Proposition \ref{C8S8.2Sub8.2.3P8.2.6}, we have
\begin{equation}
{\text{Pic}}(G/T) = \bigoplus_{\alpha \in \Sigma}\mathbbm{Z}c_{1}\big (\mathscr{O}_{\alpha}(1) \big ).
\end{equation}
Therefore, given a negative line bundle $L \in {\text{Pic}}(G/T)$, it follows that 
\begin{center}
$\displaystyle L = \bigotimes_{\alpha \in \Sigma}\mathscr{O}_{\alpha}(-\ell_{\alpha})$,
\end{center}
such that $\ell_{\alpha} > 0$, $\forall \alpha \in \Sigma$. From Theorem \ref{Theo2}, we have a K\"{a}hler potential ${\rm{K}}_{H} \colon {\rm{Tot}}(L^{\times}) \to \mathbbm{R}^{+}$, which can be described in local coordinates $(z,w) \in L^{\times}|_{U}$ as
\begin{equation}
\label{kahlerpotentialfull}
\displaystyle {\rm{K}}_{H}\big (z,w \big ) = \Big ( \prod_{\alpha \in \Sigma}\big | \big |s_{U}(z)v_{\omega_{\alpha}}^{+} \big | \big |^{2\ell_{\alpha}}\Big)w\overline{w} = \Big ( \big | \big |s_{U}(z)v_{\mu(L)}^{+} \big | \big |^{2}\Big)w\overline{w},
\end{equation}
for some local section $s_{U} \colon U \subset G^{\mathbbm{C}}/B \to G^{\mathbbm{C}}$, where $v_{\mu(L)}^{+}$ is the highest weight vector of weight $\mu(L)$ for the irreducible $\mathfrak{g}^{\mathbbm{C}}$-module $V(\mu(L)) = H^{0}(G/T,L^{-1})^{\ast}$, see Eq. (\ref{negativeweight}). Hence, by considering the manifold
\begin{center}
$M = {\rm{Tot}}(L^{\times})/\Gamma, \ \ \ {\text{where}}$ \ \ \ $\Gamma = \big \{ \lambda^{n} \in \mathbbm{C}^{\times} \ \big | \ n \in \mathbbm{Z}  \big \}$,
\end{center}
for some $\lambda \in \mathbbm{C}^{\times}$, such that $|\lambda| <1$, from the K\"{a}hler potential described above we have a (Vaisman) l.c.K. structure  $(\Omega, J, \theta)$ on $M$, such that the Lee form $\theta $ is locally given by
\begin{equation}
\displaystyle \theta = -\bigg [d\log \Big ( \big | \big |s_{U}v_{\mu(L)}^{+} \big | \big |^{2}\Big ) + \frac{\overline{w}dw + wd\overline{w}}{|w|^{2}}\bigg].
\end{equation}
If we consider
\begin{equation}
\label{fullweight}
\displaystyle \varrho = \frac{1}{2} \sum_{\alpha \in \Pi^{+}}\alpha,
\end{equation}
it follows that $\delta_{B} = 2\varrho$. Therefore, from Eq. (\ref{canonicalbundleflag}), since $I(G/T) = 2$ \cite[$\S$ 13.3]{Humphreys}, it follows that
\begin{equation}
\label{canonicalfull}
K_{G/T} = L_{\chi_{2\varrho}}^{-1} \Longrightarrow \mathscr{O}_{G/T}(-1) = L_{\chi_{\varrho}}^{-1},
\end{equation}
recall that $K_{G/T}^{\otimes \frac{1}{I(G/T)}} = \mathscr{O}_{G/T}(-1)$. Thus, if we consider a compact homogeneous Hermitian-Einstein-Weyl manifold of the form $M = {\rm{Tot}}( \mathscr{O}_{G/T}(-1)^{\times})/\mathbbm{Z}$, such that $K_{\text{ss}} = G$, the unique homogeneous Hermitian-Einstein-Weyl structure $(\Omega,J,\theta_{g})$ on $M$ is completely determined by the Lee form locally describe by\footnote{It is worthwhile to observe that $v_{2\varrho}^{+} = v_{\varrho}^{+} \otimes v_{\varrho}^{+}$, such that $v_{\varrho}^{+} \in V(\varrho)$, see for instance \cite[Page 345]{Procesi}.} 
\begin{equation}
\label{leefull}
\displaystyle \theta_{g} = -\frac{1}{2}\bigg [d\log \Big ( \big | \big |s_{U}v_{2\varrho}^{+} \big | \big |^{2}\Big ) + 2\frac{\overline{w}dw + wd\overline{w}}{|w|^{2}}\bigg] = -d\log \Big ( \big | \big |s_{U}v_{\varrho}^{+} \big | \big |^{2}\Big ) - \frac{\overline{w}dw + wd\overline{w}}{|w|^{2}},
\end{equation}
for some local section $s_{U} \colon U \subset G/T \to G^{\mathbbm{C}}$, where $v_{\varrho}^{+}$ denotes the highest weight vector of weight $\varrho$ for the irreducible $\mathfrak{g}^{\mathbbm{C}}$-module $V(\varrho)$. 

In order to illustrate the ideas described above by means of a concrete example, consider $G = {\rm{SU}}(n+1)$ and the full flag manifold
\begin{center}
${\rm{SU}}(n+1)/T^{n} = {\rm{SL}}(n+1,\mathbbm{C})/B,$
\end{center}
here we fix the same Lie-theoretical data for $\mathfrak{sl}(n+1,\mathbbm{C})$ as in Example \ref{Grassmanianexample}. By taking a negative line bundle $L \in {\text{Pic}}({\rm{SU}}(n+1)/T^{n})$, we can consider the manifold
\begin{center}
$M = {\rm{Tot}}(L^{\times})/\Gamma, \ \ \ {\text{where}}$ \ \ \ $\Gamma = \big \{ \lambda^{n} \in \mathbbm{C}^{\times} \ \big | \ n \in \mathbbm{Z}  \big \}$,
\end{center}
for some $\lambda \in \mathbbm{C}^{\times}$, such that $|\lambda| <1$. From Theorem \ref{Theo1}, we have a (Vaisman) l.c.K. structure $(\Omega, J, \theta)$ on $M$ completely determined by a K\"{a}hler potential ${\rm{K}}_{H} \colon {\rm{Tot}}(L^{\times}) \to \mathbbm{R}^{+}$, which can be described as follows. As we have seen in Example \ref{Grassmanianexample}, in  this particular case we have the irreducible fundamental $\mathfrak{sl}(n+1,\mathbbm{C})$-modules given by
\begin{center}
$V(\omega_{\alpha_{k}}) = \bigwedge^{k}(\mathbbm{C}^{n+1}),  \ \ \ {\text{and}} \ \ \  v_{\omega_{\alpha_{k}}}^{+} = e_{1} \wedge \ldots \wedge e_{k},$
\end{center}
such that $k = 1,\ldots,n$. Let $U = R_{u}(B)^{-}x_{0} \subset{\rm{SL}}(n+1,\mathbbm{C})/B$ be the opposite big cell. This open set is parameterized by the holomorphic coordinates 
$$z \in \mathbbm{C}^{\frac{n(n+1)}{2}} \mapsto n(z)x_{0} = \begin{pmatrix}
  1 & 0 & 0 & \cdots & 0 \\
  z_{21} & 1 & 0 & \cdots & 0  \\
  z_{31} & z_{32} & 1 &\cdots & 0 \\
  \vdots & \vdots & \vdots & \ddots & \vdots \\
  z_{n+1,1} & z_{n+1,2} & z_{n+1,3} & \cdots & 1 
 \end{pmatrix}x_{0},$$
where $ n = n(z) \in N^{-}$ and $z = (z_{ij}) \in \mathbbm{C}^{\frac{n(n+1)}{2}}$. From this, we define for each subset $I = \{i_{1} < \cdots < i_{k}\} \subset \{1,\cdots,n+1\}$, with $1 \leq k \leq n$, the polynomial functions $\det_{I} \colon {\rm{SL}}(n+1,\mathbbm{C}) \to \mathbbm{C}$, such that 

$$\textstyle{\det_{I}}(g) = \det \begin{pmatrix}
  g_{i_{1}1} & g_{i_{1}2} & \cdots & g_{i_{1} k} \\
  g_{i_{2}1} & g_{i_{2}2} & \cdots & g_{i_{2} k}  \\
  \vdots & \vdots& \ddots & \vdots \\
  g_{i_{k}1} & g_{i_{k}2} & \cdots & g_{i_{k}k} 
 \end{pmatrix},$$
for every $g \in {\rm{SL}}(n+1,\mathbbm{C})$. Notice that, for every $g \in {\rm{SL}}(n+1,\mathbbm{C})$, we have 

\begin{center}

$g(e_{1} \wedge \ldots \wedge e_{l}) =  \displaystyle \sum_{i_{1} < \cdots < i_{l}} \textstyle{\textstyle{\det_{I}}}(g)e_{i_{1}} \wedge \ldots \wedge e_{i_{l}},$

\end{center}
where the sum above is taken over $I = \{i_{1} < \cdots < i_{l}\} \subset \{1,\cdots,n+1\}$, with $1 \leq l \leq n$. By taking the local section $s_{U} \colon U \subset {\rm{SL}}(n+1,\mathbbm{C})/B \to {\rm{SL}}(n+1,\mathbbm{C})$, such that $s_{U}(n^{-}(z)x_{0}) = n(z)$, and supposing that 
\begin{center}
$\displaystyle L = \bigotimes_{k = 1}^{n}\mathscr{O}_{\alpha_{k}}(-\ell_{k}),$
\end{center}
with $\ell_{k} > 0$, for $k = 1,\ldots,n$, it follows from Eq. (\ref{kahlerpotentialfull}) that the K\"{a}hler potential ${\rm{K}}_{H} \colon {\rm{Tot}}(L^{\times}) \to \mathbbm{R}^{+}$, defined in local coordinates $(z,w) \in L^{\times}|_{U}$, is given by
\begin{equation}
\label{sl-potential}
\displaystyle {\rm{K}}_{H}\big (z,w \big ) = \Bigg [\prod_{k = 1}^{n}\Bigg (  \sum_{i_{1} < \cdots < i_{k}} \bigg | \textstyle{\det_{I}} \begin{pmatrix}
  1 & 0 & \cdots & 0 \\
  z_{21} & 1 & \cdots & 0  \\
  \vdots & \vdots & \ddots & \vdots \\
  z_{n+1,1} & z_{n+1,2} & \cdots & 1 
 \end{pmatrix} \bigg |^{2}\Bigg)^{\ell_{k}} \Bigg ] w\overline{w}.
\end{equation}
Hence, from the K\"{a}hler potential above, the Lee form associated to the (Vaisman) l.c.K. structure $(\Omega, J, \theta)$ on $M$ provided by Theorem \ref{Maintheo1} is given locally by
\begin{equation}
\displaystyle \theta = -\sum_{k = 1}^{n}\ell_{k}d\log\Bigg (  \sum_{i_{1} < \cdots < i_{k}} \bigg | \textstyle{\det_{I}} \begin{pmatrix}
  1 & 0 & \cdots & 0 \\
  z_{21} & 1 & \cdots & 0  \\
  \vdots & \vdots & \ddots & \vdots \\
  z_{n+1,1} & z_{n+1,2} & \cdots & 1 
 \end{pmatrix} \bigg |^{2}\Bigg) - \displaystyle \frac{\overline{w}dw + wd\overline{w}}{|w|^{2}}.
\end{equation}
Moreover, from Theorem \ref{Theo2}, if we consider a compact homogeneous Hermitian-Einstein-Weyl manifold of the form 
\begin{center}
$M = {\rm{Tot}}(\mathscr{O}_{{\rm{SU}}(n+1)/T^{n}}(-1)^{\times})/\mathbbm{Z}$, 
\end{center}
such that $K_{\text{ss}} = G = {\rm{SU}}(n+1)$, we have the unique homogeneous Hermitian-Einstein-Weyl structure $(\Omega,J,\theta_{g})$ on $M$ completely determined by the Lee form
\begin{equation}
\displaystyle \theta_{g} = -\sum_{k = 1}^{n}d\log\Bigg (  \sum_{i_{1} < \cdots < i_{k}} \bigg | \textstyle{\det_{I}} \begin{pmatrix}
  1 & 0 & \cdots & 0 \\
  z_{21} & 1 & \cdots & 0  \\
  \vdots & \vdots & \ddots & \vdots \\
  z_{n+1,1} & z_{n+1,2} & \cdots & 1 
 \end{pmatrix} \bigg |^{2}\Bigg) - \displaystyle \frac{\overline{w}dw + wd\overline{w}}{|w|^{2}},
\end{equation}
in the expression above we have used the description given in Eq. (\ref{canonicalfull}), and the fact that $\varrho = \omega_{\alpha_{1}} + \cdots + \omega_{\alpha_{n}}$, see for instance \cite[$\S$ 13.3]{Humphreys}. 

\subsection{Hyperhermitian-Weyl structures in dimension 8} In this subsection, we explore some applications of our main results in the setting of hyperhermitian-Weyl geometry (e.g. \cite{OrneaPiccinni}, \cite{PedersenPoonSwann}, \cite{Ishihara}, \cite{BGMann}). The main purpose is to provide a complete description for locally conformally hyperK\"{a}hler metrics on 8-dimensional locally conformal hyperK\"{a}hler compact manifolds. 

\begin{definition}
A hyperhermitian manifold $(M,g,J_{1},J_{2},J_{3})$ is called locally conformally hyperK\"{a}hler (l.c.h.K.) if there exists an open cover $\mathscr{U}$ of $M$ and a family of smooth functions $\{f_{U}\}_{U \in \mathscr{U}}$, $f_{U} \colon U \to \mathbbm{R}$, such that, for all $U \in \mathscr{U}$, the local metric $g_{U} = {\mathrm{e}}^{-f_{U}}g|_{U}$, is hyperK\"{a}hler.
\end{definition}

In the context of the above definition, we have, respectively, the (global) K\"{a}hler 4-form and the associated Lee $1$-form given by $\Upsilon = \sum_{i = 1}^{3} \Omega_{i} \wedge \Omega_{i} \in \Omega^{4}(M)$, such that $\Omega_{i} = g(J_{i} \otimes {\text{id}})$, for $i = 1,2,3$, and $\theta \in \Omega^{1}(M)$, such that $\theta|_{U} = df_{U}$, for all $U \in \mathscr{U}$. These differential forms satisfy the following relation
\begin{center}
$d \Upsilon = \theta \wedge \Upsilon$.
\end{center}
In what follows, we shall denote by $\mathcal{G}$ the subbundle of ${\rm{End}}(TM)$ spanned by $J_{1},J_{2},J_{3}$. Since hyperK\"{a}hler metrics are Einstein metrics, from \cite{Gauduchon2} we have the following result.

\begin{corollary}
Let $(M,g,J_{1},J_{2},J_{3})$ be a compact locally conformally hyperK\"{a}hler manifold and assume that none metric in the conformal class $[g]$ is hyperK\"{a}hler. Then there exists $g_{0} \in [g]$, such that the associated Lee form $\theta_{g_{0}}$ is $\nabla^{g_{0}}$-parallel.
\end{corollary}

Let $M$ be a compact locally conformally hyperK\"{a}hler manifold. From the above corollary we can chose $g$ such that $||A|| = 1$, where $A = \theta^{\sharp} \in \Gamma(TM)$, and $\nabla\theta = 0$. In this setting, as in Theorem \ref{flatvaisman}, we have the following result.
\begin{corollary}
\label{RegularlchK}
If $\mathcal{F}_{A}$ is regular, then $M$ is a flat $S^{1}$-principal bundle over a $3$-Sasakian manifold $Q = M/\mathcal{F}_{A}$. Moreover, the associated projection map is a Riemannian submersion.
\end{corollary}

In the above setting, we have that the vector fields $A,J_{1}A,J_{2}A,J_{3}A$, span an integrable $4$-dimensional distribution $\mathcal{D}$ on $M$. Moreover, we have the following result \cite[Theorem 5.2]{PedersenPoonSwann}.

\begin{proposition}
\label{twistorfoliation}
Let $M$ be a compact locally conformally hyperK\"{a}hler manifold of real $\dim \geq 12$ with $\mathcal{D}$ regular. Then the leaf space $N = M/\mathcal{D}$ inherits a structure of quaternion K\"{a}hler manifold with positive scalar curvature.
\end{proposition}

\begin{remark}
In the above proposition, by choosing a complex structure $J \in \mathcal{G}$, and denoting by $\mathcal{F}_{J}$ the complex analytic folliation spanned by $A$ and $JA$, from Theorem \ref{vaismanclass} we have that $Z_{J} = M/\mathcal{F}_{J}$ is a K\"{a}hler-Einstein manifold analytically equivalent to the twistor space of $N$, see for instance \cite{OrneaPiccinni}. Thus, we simply denote $\mathcal{F} = \mathcal{F}_{J}$.
\end{remark}

The results above also hold for $\dim_{\mathbbm{R}}(M) = 8$, and the complete description for locally conformal hyperK\"{a}hler metrics on compact 8-dimensional manifolds can be done as follows. Combing the result of Ishihara \cite{Ishihara} with the results of Hitchin who classified all 4-dimensional compact quaternionic K\"{a}hler manifolds of positive scalar curvature \cite{Hitchin}, and Poon and Salamon who extended this classification to dimension 8 \cite{PoonSalamon}, we have the following classification of all fibered Riemannian spaces with Sasakian 3-structure.

\begin{theorem}[\cite{BGMann}]
\label{7dim3sasaki}
Let $Q$ be a complete principal Riemannian fibration with 3-Sasakian structure of positive scalar curvature. Then,
\begin{enumerate}
\item if $Q$ has dimension $7$, then $Q$ is either $S^{7}$, $\mathbbm{R}\mathbbm{P}^{7}$, or ${\rm{SU}}(3)/{\rm{U}}(1)$;
\item if $Q$ has dimension $11$, then $Q$ is either $S^{11}$, $\mathbbm{R}\mathbbm{P}^{11}$, ${\rm{SU}}(4)/{\rm{S}}({\rm{U}}(2) \times {\rm{U}}(1))$, or ${\rm{G_{2}}}/{\rm{SU}}(2)$.
\end{enumerate}
In particular, every such fibered Riemannian manifold of dimension 7 or 11 with 3-Sasakian structure of positive scalar curvature is homogeneous.
\end{theorem}

From the above theorem, Corollary \ref{RegularlchK}, and Proposition \ref{twistorfoliation}, we have that a compact locally conformally hyperK\"{a}hler 8-dimensional manifold $M$, such that the foliations $\mathcal{F}_{A}$, $\mathcal{D}$, and $\mathcal{F}$ are regular, is one of the following manifolds\footnote{It is worth mentioning that the twisted product $S^{7} \times_{\mathbbm{Z}_{2}} S^{1} = (S^{7} \times S^{1})/\mathbbm{Z}_{2}$ is obtained by considering the action of $\pi_{1}(\mathbbm{R}\mathbbm{P}^{7}) = \mathbbm{Z}_{2}$ on $S^{7}$ by deck transformation, and the action of $ \mathbbm{Z}_{2}$ on $S^{1}$ through the holonomy representation, see for instance Remark \ref{monodromyvaisman}.}
\begin{center}
$S^{7} \times S^{1}$, \ $\mathbbm{R}\mathbbm{P}^{7} \times S^{1}$, \ $S^{7} \times_{\mathbbm{Z}_{2}} S^{1}$, \ $\big ({\rm{SU}}(3)/{\rm{U}}(1) \big) \times S^{1}$,
\end{center}
see for instance \cite[Corollary 4.3]{OrneaPiccinni}. The locally conformally hyperK\"{a}hler metric on $S^{7} \times S^{1}$, \ $\mathbbm{R}\mathbbm{P}^{7} \times S^{1}$, and $S^{7} \times_{\mathbbm{Z}_{2}} S^{1}$, can be obtained by applying the construction described in Example \ref{Grassmanianexample} in the particular setting of the complex Hopf fibration
\begin{center}
$S^{1} \hookrightarrow S^{7} \to \mathbbm{C}{\rm{P}}^{3}.$
\end{center}
For the manifold $\big ({\rm{SU}}(3)/{\rm{U}}(1)\big) \times S^{1}$, let us explain in more details how one can describe the unique homogeneous locally conformally hyperK\"{a}hler metric. Consider the particular case of Example \ref{examplefullflag} provided by the the Wallach space \cite{Wallach}
\begin{equation}
W_{6} = {\rm{SU}}(3)/T^{2}.
\end{equation}
Since in this case we have ${\text{Pic}}(W_{6}) = \mathbbm{Z}c_{1}\big (\mathscr{O}_{\alpha_{1}}(1)\big) \oplus \mathbbm{Z}c_{1}\big (\mathscr{O}_{\alpha_{2}}(1)\big)$, for any negative line bundle $L \in {\text{Pic}}(W_{6})$ we have a K\"{a}hler potential ${\rm{K}}_{H} \colon {\rm{Tot}}(L^{\times}) \to \mathbbm{R}^{+}$, given by
\begin{center}
$\displaystyle {\rm{K}}_{H}\big (z,w \big ) = \Bigg [ \bigg ( 1 + \sum_{i = 2}^{3}|z_{i1}|^{2} \bigg )^{\ell_{1}} \bigg (1 + |z_{32}|^{2} + \bigg | \det \begin{pmatrix}
z_{21} & 1  \\                  
z_{31}  & z_{32} 
 \end{pmatrix} \bigg |^{2} \bigg )^{\ell_{2}} \Bigg ]w\overline{w},$
\end{center}
see for instance Eq. (\ref{sl-potential}). From this, by considering the manifold 
\begin{center}
$M = {\rm{Tot}}(L^{\times})/\Gamma, \ \ \ {\text{where}}$ \ \ \ $\Gamma = \big \{ \lambda^{n} \in \mathbbm{C}^{\times} \ \big | \ n \in \mathbbm{Z}  \big \}$,
\end{center}
for some $\lambda \in \mathbbm{C}^{\times}$, such that $|\lambda| <1$, we have from Theorem \ref{Theo1} a (Vaisman) l.c.K. structure $(\Omega, J, \theta)$ on $M$ with associated Lee form (locally) described as follows
\begin{equation}
\theta = -\ell_{1}d\log\bigg ( 1 + \sum_{i = 2}^{3}|z_{i1}|^{2} \bigg ) - \ell_{2}d\log  \bigg (1 + |z_{32}|^{2} + \bigg | \det \begin{pmatrix}
z_{21} & 1  \\                  
z_{31}  & z_{32} 
 \end{pmatrix} \bigg |^{2} \bigg )  - \displaystyle \frac{\overline{w}dw + wd\overline{w}}{|w|^{2}}.
\end{equation}
If we consider a compact homogeneous Hermitian-Einstein-Weyl manifold of the form 
\begin{center}
$M = {\rm{Tot}}\big (\mathscr{O}_{{\rm{SU}}(3)/T^{2}}(-1)^{\times}\big )/\mathbbm{Z}$, 
\end{center}
it follows that 
\begin{equation}
M = X_{1,1} \times S^{1},
\end{equation}
such that $X_{1,1}$ is the Aloff-Wallach space \cite{Aloff} defined by $X_{1,1} = {\rm{SU}}(3)/{\rm{U}}(1)$. Actually, recall that $M$ is a flat $S^{1}$-principal bundle over the simply connected Sasaki manifold defined by the sphere bundle of $\mathscr{O}_{{\rm{SU}}(3)/T^{2}}(-1)$, which is defined by
\begin{center}
$S^{1} \hookrightarrow X_{1,1} \to W_{6}.$
\end{center}
Therefore, from Theorem \ref{Theo2}, we have a Hermitian-Einstein-Weyl structure $(\Omega, J, \theta_{g})$ on $M$ completely determined by the Lee form
\begin{equation}
\label{hyperkahlerhiggsfield}
\theta_{g} = -d\log \bigg [ \Big ( 1 + \sum_{i = 2}^{3}|z_{i1}|^{2} \Big )\Big (1 + |z_{32}|^{2} + \Big | \det \begin{pmatrix}
z_{21} & 1  \\                  
z_{31}  & z_{32} 
 \end{pmatrix} \Big |^{2} \Big ) \bigg ]  - \displaystyle \frac{\overline{w}dw + wd\overline{w}}{|w|^{2}}.
\end{equation}
From the uniqueness (up to homothety) of Gauduchon's metric we have that the Lee form given in Eq. (\ref{hyperkahlerhiggsfield}) defines a locally conformally hyperK\"{a}hler metric on ${\rm{SU}}(3)/{\rm{U}}(1) \times S^{1}$ (cf. \cite[Proposition 5.1]{PedersenPoonSwann}). 
\end{example}

\end{document}